\DeclareMathAlphabet{\mathpzc}{OT1}{pzc}{m}{it}
\DeclareSymbolFontAlphabet{\amsmathbb}{AMSb}%
\newcommand{\beq}{\begin{equation}}
\newcommand{\eeq}{\end{equation}}
\newcommand{\beqa}{\begin{eqnarray}}
\newcommand{\eeqa}{\end{eqnarray}}
\newcommand{\beqan}{\begin{eqnarray*}}
\newcommand{\eeqan}{\end{eqnarray*}}
\newcommand{\vnorm}[1]{\left\|#1\right\|}
\newcommand{\ess}{\text{ess}}
\newcommand{\E}{\mathds{E} }
\newcommand{\prob}{\mathbb{P}}
\newcommand{\diag}{\mathop{\mathrm{diag}}}
\newcommand\T{{\mathpalette\raiseT\intercal}}
\newcommand\raiseT[2]{\raisebox{0.25ex}{$#1#2$}
%\hspace{-0.1cm}
}
\newcommand{\Aset}{\mathbb{A}}
\newcommand{\Rset}{\mathbb{R}}
\newcommand{\Uset}{\mathbb{U}}
\newcommand{\Xset}{\mathbb{X}}
\newcommand{\Zset}{\mathbb{Z}}
\newcommand{\Fcal}{{\cal F}}
\newcommand{\Lcal}{{\cal L}}
\newcommand{\Ncal}{{\cal N}}
\newcommand{\Ocal}{{\cal O}}
\newcommand{\Pcal}{{\cal P}}
\newcommand{\Wcal}{{\cal W}}
\newcommand{\argmin}{\mathop{\rm argmin}}
\newcommand{\argmax}{\mathop{\rm argmax}}
\newcommand{\bone}{\mathbf{1}}
\renewcommand{\v}[1]{{\bm{#1}}}
\newcommand{\ve}{\varepsilon}
\newcounter{l1}
\newcounter{l2}
\newcounter{l3}
\newcommand{\bdotlist}{\begin{list}{$\bullet$}{}}
\newcommand{\bboxlist}{\begin{list}{$\Box$}{}}
\newcommand{\bbboxlist}{\begin{list}{\raisebox{.005in}{{\tiny
$\blacksquare$ \ \ }}}{}}
\newcommand{\bdashlist}{\begin{list}{$-$}{} }
\newcommand{\blist}{\begin{list}{}{} }
\newcommand{\barablist}{\begin{list}{\arabic{l1}}{\usecounter{l1}}}
\newcommand{\balphlist}{\begin{list}{(\alph{l2})}{\usecounter{l2}}}
\newcommand{\bAlphlist}{\begin{list}{\Alph{l2}.}{\usecounter{l2}}}
\newcommand{\bdiamlist}{\begin{list}{$\diamond$}{}}
\newcommand{\bromalist}{\begin{list}{(\roman{l3})}{\usecounter{l3}}}
\newtheorem{theorem}{Theorem}[section]
\newtheorem{lemma}[theorem]{Lemma}
\newtheorem{proposition}[theorem]{Proposition}
\newtheorem{remark}[theorem]{Remark}
\newtheorem*{theoremnn}{Theorem}
\newtheorem{assumption}{Assumption}[section]
\newcommand{\CVaR}{{\rm CVaR}}
\newcommand{\PE}{{{\Pcal}^{\rm E}}}
\newcommand{\pEstar}{{{p}^{\rm E}_\star}}
\newcommand{\dEstar}{{{d}^{\rm E}_\star}}
\newcommand{\PCVaR}{{{\Pcal}^\CVaR}}
\renewcommand{\bone}{\mathds{1}}
\renewcommand{\Wcal}{\mathpzc{W}}
\DeclareMathOperator{\proj}{proj}
\DeclareMathOperator{\indc}{\mathbb{I}}
\renewcommand{\hat}{\widehat}
\renewcommand{\prob}{{\sf Pr}}
\title{A Stochastic Primal-Dual Method for Optimization with Conditional Value at Risk Constraints}
\author{Avinash N. Madavan \qquad Subhonmesh Bose
\thanks{
	A. N. Madavan and S. Bose are at the University of Illinois at Urbana-Champaign Urbana, Illinois, 61801. Emails:  madavan2@illinois.edu, boses@illinois.edu.
}}
\newcommand{\rev}[1]{{#1}}
\newcommand{\whencolumns}[2]{#2}
\newcommand{\whencolumns}[2]{#1}
\newcommand{\removelatexerror}{\let\@latex@error\@gobble}
\begin{document}

\maketitle

%!TEX root = mainCvarOpt.tex

\begin{abstract}
We study a first-order primal-dual subgradient method to optimize risk-constrained risk-penalized optimization problems, where risk is modeled via the popular conditional value at risk (CVaR) measure. The algorithm processes independent and identically distributed samples from the underlying uncertainty in an online fashion, and produces an $\eta/\sqrt{K}$-approximately feasible and $\eta/\sqrt{K}$-approximately optimal point within $K$ iterations with constant step-size, where $\eta$ increases with tunable risk-parameters of CVaR. We find optimized step sizes using our bounds and precisely characterize the computational cost of risk aversion as revealed by the growth in $\eta$. Our proposed algorithm makes a simple modification to a typical primal-dual stochastic subgradient algorithm. With this mild change, our analysis surprisingly obviates the need for a priori bounds or complex adaptive bounding schemes for dual variables assumed in many prior works. We also  draw interesting parallels in sample complexity with that for chance-constrained programs derived in the literature with a very different solution architecture.
%Our contributions are two-fold. First, we propose a stochastic primal-dual iterative subgradient algorithm that solves the risk-neutral counterpart with a $\Ocal(1/\sqrt{K})$ guarantee on the expected suboptimality and constraint violation after $K$ iterations and also analyze almost sure asymptotic convergence with an alternate step-size choice. Second, we adopt that algorithm to solve the risk-sensitive problem and study how the iteration complexity varies with risk-aversion.
\end{abstract}

%!TEX root = mainCvarOpt.tex

\section{Introduction}
\label{sec:intro}

We study iterative primal-dual stochastic subgradient algorithms to solve risk-sensitive optimization problems of the form
\whencolumns{
\begin{align}
\begin{aligned}
\PCVaR : \quad
& \underset{\v{x} \in \Xset}{\text{minimize}} && \ F(\v{x}) := \CVaR_\alpha[f_\omega(\v{x})], \\ & \text{subject to } && \  G^i(\v{x}) := \CVaR_{\beta^i}[g^i_\omega(\v{x})] \leq 0, \quad i=1,\ldots, m,
\end{aligned}
\label{eq:prob.P}
\end{align}
}{
\begin{align}
\begin{aligned}
\PCVaR : \quad
& \underset{\v{x} \in \Xset}{\text{minimize}} && F(\v{x}) := \CVaR_\alpha[f_\omega(\v{x})], \\ & \text{subject to } &&  G^i(\v{x}) := \CVaR_{\beta^i}[g^i_\omega(\v{x})] \leq 0, \\ &&& \text{for each } i=1,\ldots, m,
\end{aligned}
\label{eq:prob.P}
\end{align}
}
where $\omega \in \Omega$ is random and $\alpha, \pmb{\beta}:=(\beta^1, \ldots, \beta^m)$ in $[0,1)$ define risk-aversion parameters. The collection of real-valued functions $f_\omega$, $g^1_\omega, \ldots, g^m_\omega$ are assumed convex but not necessarily differentiable, over 
the closed convex set $\Xset \subseteq \Rset^n$,
where $\Rset$ and $\Rset_+$ stand for the set of real and nonnegative numbers, respectively. 
Denote by $\v{G}$ and $\v{g}_\omega$, the collection of $G^i$'s and $g^i_\omega$'s respectively, for $i = 1,\ldots, m$.
$\CVaR$ stands for conditional value at risk. For any $\delta \in [0, 1)$, $\CVaR_\delta[y_\omega]$ of a scalar random variable $y_\omega$ with continuous distribution equals its expectation computed over the $1-\delta$ tail of the distribution of $y_\omega$. For $y_\omega$ with general distributions, $\CVaR$ is defined via the following variational characterization
\begin{align}
\CVaR_{\delta}[y_\omega] = \min_{u \in \Rset} \left\{ u + \frac{1}{1-\delta}\E [y_\omega - u]^+ \right\},
\label{eq:CVaR.def}
\end{align}
following \cite{rockafellar2002optimization}.
\rev{For each $\v{x} \in \Xset$, assume that $\E[ | f_\omega(\v{x}) | ]$ and $\E[ | g^i_\omega(\v{x}) | ]$ are finite, implying that $F$ and $\v{G}$ are well-defined everywhere in $\Xset$.}

$\PCVaR$ offers a modeler the flexibility to indicate her risk preference in $\alpha, \pmb{\beta}$. With $\alpha$ close to zero, she indicates risk-neutrality towards the uncertain cost associated with the decision. With $\alpha$ closer to one, she expresses her risk aversion towards the same and seeks a decision that limits the possibility of large random costs associated with the decision. Similarly, $\beta$'s express the risk tolerance in constraint violation. Choosing $\beta$'s close to zero indicates that constraints should be satisfied on average over $\Omega$ rather than on each sample. Driving $\beta$'s to unity amounts to requiring the constraints to be met almost surely. Said succinctly, $\PCVaR$ permits the modeler to customize risk preference between the risk-neutral choice of expected evaluations of functions to the conservative choice of robust evaluations.

%$\CVaR$ shares close ties to the mean semi-deviation risk measure. This risk measure takes the form, $\rho(\mathpzc{x}) \coloneqq \E[\mathpzc{x}] + c \E[\| \mathpzc{x} - \E[\mathpzc{x}] \|^{p}]^{1/p}$. Techniques for solving such methods have been studied in \cite{kalogerias2018recursive,bedi2019nonparametric}. However, the compositional nature of the risk measures require estimates of $\E[\mathpzc{x}]$ and $\E[\| \mathpzc{x} - \E[\mathpzc{x}] \|^{p}]$ at each iteration, precluding the use of streaming data approaches, such as the one proposed in this paper.

There is a growing interest in solving risk-sensitive optimization problems with data. See \cite{kalogerias2018recursive,bedi2019nonparametric} for recent examples that tackle problems with generalized mean semi-deviation risk that equals $\E[{y_\omega}] + c \E[ \vert {y_\omega} - \E[{y_\omega}] \vert^{p}]^{1/p}$ for $p>1$ for a random variable $y_\omega$. There is a long literature on risk measures, e.g., see \cite{rockafellar2002optimization,ogryczak1999stochastic,ruszczynski2006optimization,charnes1959chance,mafusalov2018buffered,ahmadi2012entropic}. We choose $\CVaR$ for three particular reasons. First, it is a coherent risk measure, meaning that it is normalized, sub-additive, positively homogeneous and translation invariant, i.e., 
\begin{gather*}
\CVaR_{\delta}[0] = 0, \; \CVaR_{\delta}[y^1_\omega + y^2_\omega] \leq  \CVaR_{\delta}[y^1_\omega] + \CVaR_{\delta}[y^2_\omega], \\
\CVaR_{\delta}[t y_\omega] = t \CVaR_{\delta}[y_\omega], \;  \CVaR_{\delta}[y_\omega + t'] = \CVaR_{\delta}[y_\omega] + t'
\end{gather*} 
for random variables $y_\omega, y^1_\omega, y^2_\omega$, $t >0$ and $t' \in \Rset$.
An important consequence of coherence is that $F$ and $\v{G}$ in $\PCVaR$ inherit the convexity of $f_\omega$ and $\v{g}_\omega$. Convexity together with the variational characterization in \eqref{eq:CVaR.def} allow us to design sampling based primal-dual methods for $\PCVaR$ for which we are able to provide finite sample analysis of approximate optimality and feasibility. The popularity of the $\CVaR$ measure is our second reason to study $\PCVaR$. Following Rockafellar and Uryasev's seminal work in \cite{rockafellar2002optimization}, $\CVaR$ has found applications in various engineering domains, e.g., see \cite{miller2017optimal,kisiala2015conditional}, and therefore 
%. We illustrate a potential use case of our study of $\PCVaR$ in power systems in Section \ref{sec:example} and 
we anticipate wide applications of our result. Our third and final reason to study $\PCVaR$ is its close relation to other optimization paradigms in the literature as we describe next.

%When $\Omega$ is a singleton, $\PCVaR$ becomes a deterministic convex program. 
%In such problems,  advocated the use of $\CVaR$ as the risk measure and studied its properties in detail. Since then, $\CVaR$ 
%$\PCVaR$ with deterministic constraints (with $\v{g}_\omega$'s identical across $\omega$'s) arise in portfolio selection problems in financial engineering, where an investor seeks to minimize the risk associated with the return of a portfolio under budget constraints. 
%If $\Omega$ is finite, $\PCVaR$ can be reformulated as a deterministic linear program as in \addcite. Problems of interest to us   $\Omega$
$\PCVaR$ without constraints and $\alpha = 0$ reduces to the minimization of $\E[f_\omega(\v{x})]$, the canonical stochastic optimization problem. With $\alpha \uparrow 1$, the problem description of $\PCVaR$ approaches that of a robust optimization problem (see \cite{ben2009robust}) of the form $\min_{\v{x} \in \Xset} \ess\sup_{\omega \in \Omega} f_\omega(\v{x})$, where $\ess\sup$ denotes the essential supremum.  
%When $\Omega$ is compact and convex, solution techniques for such problems typically leverage duality theory of convex programming and explicit descriptions of $\Omega$ to reformulate the problem into a deterministic convex program, e.g., see \cite{ben2009robust} for a survey.
%Setting $\beta$'s identically to zero, the constraints in $\PCVaR$ become $\E[g^i_\omega (\v{x})] \leq 0$. Such an average enforcement of constraints bears resemblance to `long-term' constraints studied in the online convex optimization (OCO) literature, e.g., in \cite{yu2017online,mahdavi2012trading}. 
Driving $\beta$'s to unity, $\PCVaR$ demands the constraints to be enforced almost surely. Such robust constraint enforcement is common in multi-stage stochastic optimization problems with recourse and discrete-time optimal control problems, e.g., in \cite{shapiro2007tutorial,skaf2010design,hadjiyiannis2011efficient}.
$\CVaR$-based constraints are closely related to chance-constraints introduced by Charnes and Cooper in \cite{charnes1959chance}, that enforce 
$ \prob\{g_\omega(\v{x}) \leq 0 \} > 1-\ve$ where $\prob$ refers to the probability measure on $\Omega$. Even if $g_\omega$ is convex, chance-constraints typically describe a nonconvex feasible set. 
%The relation
%$$ \CVaR_{1-\delta}(g^1_\omega(\v{x})) \leq 0 \implies \prob\{g_\omega^1(\v{x}) \leq 0 \} > 1-\delta,$$
%implies that 
It is well-known that $\CVaR$-based constraints provide a convex inner approximation of chance-constraints. 
%There is an important advantage of using $\CVaR$-based constraints over probabilistic ones. 
Restricting the probability of constraint violation does not limit the extent of any possible violation, while $\CVaR$-based enforcement does so in expectation. 
$\CVaR$ is also intimately related to the buffered probability of exceedence (bPOE)  introduced and studied more recently in \cite{mafusalov2018buffered,zhang2019derivatives}. In fact, bPOE is the inverse function of $\CVaR$ and hence, problems with bPOE-constraints can often be reformulated as instances of $\PCVaR$.

It can be challenging to compute $\CVaR$ of $f_\omega(\v{x})$ or $\v{g}_\omega(\v{x})$ for a given decision variable $\v{x}$ with respect to a general distribution on $\Omega$ for two reasons. First, if samples from $\Omega$ are obtained from a simulation tool, an explicit representation of the probability distribution on $\Omega$ may not be available. Second, even if such a distribution is available, computation of $\CVaR$ (or even the expectation) can be difficult. For example, with $f_\omega$ as the positive part of an affine function and $\omega$ being uniformly distributed over a unit hypercube, computation of $\E[f_\omega]$ via a multivariate integral is \#P-hard according to \cite[Corollary 1]{hanasusanto2016comment}. Therefore, we do not assume knowledge of $F$ and $\v{G}$ but rather study a sampling-based algorithm to solve $\PCVaR$.

Solution architectures for $\PCVaR$ via sampling come in two flavors. The first approach is sample average approximation (SAA) that replaces the expectation in \eqref{eq:CVaR.def} by an empirical average over $N$ samples. One can then solve the sampled problem as a deterministic convex program.\footnote{For the unconstrained problem, variance-reduced stochastic gradient descent methods can efficiently minimize the  resulting finite sum as in \cite{schmidt2017minimizing,johnson2013accelerating}.} We take the second and alternate approach of stochastic approximation and process independent and identically distributed (i.i.d.) samples from $\Omega$ in an online fashion. Iterative stochastic approximation algorithms for the unconstrained problem have been studied since the early works by Robbins and Monro in \cite{robbins1971convergence} and by Kiefer and Wolfowitz in \cite{kiefer1952stochastic}. See \cite{kushner2003stochastic} for a more recent survey. Zinkevich in \cite{zinkevich2003online} proposed a projected stochastic subgradient method to tackle constraints in such problems. Without directly knowing $\v{G}$, we cannot easily project the iterates on the feasible set $\{ \v{x} \in \Xset \ | \ \v{G}(\v{x}) \leq 0 \}$. We circumvent the challenge by associating Lagrange multipliers $\v{z} \in \Rset^m_+$ to the constraints and iteratively updating $\v{x}, \v{z}$ by using $f_\omega, \v{g}_\omega$ and their subgradients via a first-order stochastic primal-dual algorithm for $\PCVaR$ along the lines of \cite{nedic2009subgradient,xu2018primal,yu2017online}.

In Section \ref{sec:alg}, we first design and analyze Algorithm \ref{alg:PDSS} for $\PCVaR$ with $\alpha=0, \pmb{\beta} = 0$, i.e., the optimization problem
\whencolumns{
\begin{align}
\begin{aligned}
\PE : \quad
& \underset{\v{x} \in \Xset}{\text{minimize}} && \ F(\v{x}) := \E[f_\omega(\v{x})], \\ & \text{subject to } && \  G^i(\v{x}) := \E[g^i_\omega(\v{x})] \leq 0, \quad i=1,\ldots, m.
\end{aligned}
\label{eq:prob.P.exp}
\end{align}
}{
\begin{align}
\begin{aligned}
\PE : \quad
& \underset{\v{x} \in \Xset}{\text{minimize}} && F(\v{x}) := \E[f_\omega(\v{x})], \\ & \text{subject to } &&  G^i(\v{x}) := \E[g^i_\omega(\v{x})] \leq 0, \\ &&& \text{for each } i=1,\ldots, m.
\end{aligned}
\label{eq:prob.P.exp}
\end{align}
}
First-order stochastic primal-dual algorithms have a long history, dating back almost forty years, including that in \cite{nesterov1998introductory,nedic2009subgradient,ermoliev1976methods,kushner2003stochastic,nemirovski2009robust}. The analyses of these algorithms often require a bound on the possible growth of the dual variables. Borkar and Meyn in \cite{borkar2000ode} stress the importance of compactness assumptions in their analysis of stochastic approximation algorithms. 
A priori bounds used in \cite{nemirovski2009robust} are difficult to know in practice and techniques for iterative construction of such bounds as in \cite{nedic2009subgradient} require extra computational effort.
\rev{A regularization term in the dual update has been proposed in \cite{mahdavi2012trading,koppel2017proximity} to circumvent this limitation. Instead, }
% Motivated to circumvent this limitation,
we propose a different modification to the classical primal-dual stochastic subgradient algorithm. We were surprised to find that, with this  simple modification, the analysis allows us to completely bypass the need for explicit or implicit bounds on the dual variable in our analysis. 
While the classical primal-dual approach samples once for a single update of the primal and the dual variables, we sample twice--once to update the primal variable and then again to update the dual variable with the most recent primal iterate--thus, adopting a Gauss-Seidel approach in place of a Jacobi framework.
For Algorithm \ref{alg:PDSS}, we bound the expected optimality gap and constraint violations at a suitably weighted average of the iterates by $\eta/\sqrt{K}$ for a constant $\eta$ with a constant step-size algorithm. Using these bounds, we then carefully optimize the step-size that allows us to reach within a given threshold of suboptimality and constraint violation with the minimum number of iterations.
The additional sample required in our update aids in the analysis; however, it comes at the price of making the sample complexity double of the iteration complexity.
We also provide stability analysis of our algorithm with decaying step-sizes that takes advantage of a dissipation inequality that we derive for our Gauss-Seidel approach.

In Section \ref{sec:cvar}, we solve $\PCVaR$ with general risk aversion parameters $\alpha, \pmb{\beta}$ using Algorithm \ref{alg:PDSS} on an instance of $\PE$ obtained through a standard reformulation via the variational formula for $\CVaR$ in \eqref{eq:CVaR.def} from \cite{rockafellar2002optimization}. 
We then bound the expected suboptimality and constraint violation at a weighted average of the iterates for $\PCVaR$ by $\eta(\alpha, \v{\beta})/\sqrt{K}$. Upon utilizing the optimized step-sizes from the analysis of $\PE$, we are then able to study the precise growth in the required iteration (and sample) complexity of $\PCVaR$ as a function of $\alpha, \v{\beta}$. Not surprisingly, the more risk-averse a problem one aims to solve, the greater this complexity increases. A modeler chooses risk aversion parameters primarily driven by attitudes towards risk in specific applications. Our precise characterization of the growth in sample complexity with risk aversion will permit the modeler to balance between desired risk levels and computational challenges in handling that risk. 
We remark that the algorithmic architecture for the risk neutral problem may not directly apply to the risk-sensitive variant for general risk measures. For example, the algorithm described in \cite{kalogerias2018recursive} for general mean-semideviation-type risk measures is considerably more complex than that required for the risk-neutral problem. We are able to extend our algorithm and its analysis for $\PE$ to $\PCVaR$, thanks to the variational form in \eqref{eq:CVaR.def} that $\CVaR$ admits. See the discussion after the proof of Theorem \ref{thm:cvar} for a precise list of properties a risk measure must exhibit for us to apply the same trick.
Using concentration inequalities, we also report an interesting connection of our results to that in \cite{calafiore2005uncertain,campi2008exact} on scenario approximations to chance-constrained programs. The resemblance in sample complexity is surprising, given that the approach in \cite{calafiore2005uncertain,campi2008exact} solves a deterministic convex program with sampled constraints, while we process samples in an online fashion. 

\rev{We illustrate properties of our algorithm through a stylized example. Our experiments reveal that the optimized iteration count (and sample complexity) for even a simple example is quite high. This limitation is unfortunately common for subgradient algorithms and likely cannot  be overcome in optimizing general nonsmooth functions that we study. While the bounds are order-optimal, our numerical experiments reveal that a solution with desired risk tolerance can be found in less iterations than obtained from the upper bound. This is an artifact of optimizing step-sizes based on upper bounds on suboptimality and constraint violation. 
We end the paper in Section \ref{sec:conc} with discussions on possible extensions of our analysis.}

Very recently, it was brought to our attention that the work in \cite{boob2019stochastic} done concurrently presents a related approach to tackle optimization of composite nonconvex functions under related but different assumptions. In fact, their work appeared at the same time as our early version and claims a similar result that does not require bounds on the dual variables. Our analysis does not require or analyze the case with strongly convex functions within our setup and therefore Nesterov-style acceleration remains untenable. As a result, our algorithm is different. Our focus on $\CVaR$ permits us to further analyze the growth in optimized sample complexity with risk aversion and its connection to chance-constrained optimization, that is quite different.

%!TEX root = mainCvarOpt.tex

\section{Algorithm for \texorpdfstring{$\PE$}{PE} and its analysis}
\label{sec:alg}

We present the {primal-dual stochastic subgradient} method to solve $\PE$ in Algorithm \ref{alg:PDSS}. 

\whencolumns{
\begin{algorithm}
	\kwInit{Choose $\v{x}_1 \in \Xset$, $\v{z}_1 = 0$, and a positive sequence $\v{\gamma}$.}
	% \While{$k \le K$}
	\For{$k \geq 1$}{
		Sample $\omega_k \in \Omega$. Update  $\v{x}$ as
		\begin{alignat}{1}
			\v{x}_{k+1} 
			&\gets 
			\argmin_{\v{x} \in \Xset}  
%			\left\{ 
				\left\langle \nabla f_{\omega_k}(\v{x}_k) + \sum_{i=1}^m {z}_k^i \nabla {g}_{\omega_k}^i(\v{x}_k), \v{x} - \v{x}_k \right\rangle 
				+ \frac{1}{2\gamma_k} \vnorm{\v{x} - \v{x}_k}^2. 
%			\right\}. 
			\label{eq:xUpdate}
		\end{alignat}

		Sample $\omega_{k+1/2} \in \Omega$. Update $\v{z}$ as
		\begin{alignat}{1}
			\v{z}_{k+1} 
			&\gets 
			\argmax_{\v{z} \in \Rset_+^m} 
%			\left\{ 
				\left\langle \v{g}_{\omega_{k +1/2}}(\v{x}_{k+1}), \v{z} - \v{z}_k \right\rangle 
				- \frac{1}{2\gamma_k} \| \v{z} - \v{z}_k \|^2. 
%			\right\} 
			\label{eq:zUpdate}
		\end{alignat}
	}
	\caption{Primal-dual stochastic subgradient method for $\PE$.}
	\label{alg:PDSS}
\end{algorithm}
}{
\begin{figure*}[t]
\removelatexerror
\begin{algorithm}[H]
	\kwInit{Choose $\v{x}_1 \in \Xset$, $\v{z}_1 = 0$, and a positive sequence $\v{\gamma}$.}
	% \While{$k \le K$}
	\For{$k \geq 1$}{
		Sample $\omega_k \in \Omega$. Update  $\v{x}$ as
		\begin{alignat}{1}
			\v{x}_{k+1} 
			&\gets 
			\argmin_{\v{x} \in \Xset}  
%			\left\{ 
				\left\langle \nabla f_{\omega_k}(\v{x}_k) + \sum_{i=1}^m {z}_k^i \nabla {g}_{\omega_k}^i(\v{x}_k), \v{x} - \v{x}_k \right\rangle 
				+ \frac{1}{2\gamma_k} \vnorm{\v{x} - \v{x}_k}^2. 
%			\right\}. 
			\label{eq:xUpdate}
		\end{alignat}

		Sample $\omega_{k+1/2} \in \Omega$. Update $\v{z}$ as
		\begin{alignat}{1}
			\v{z}_{k+1} 
			&\gets 
			\argmax_{\v{z} \in \Rset_+^m} 
%			\left\{ 
				\left\langle \v{g}_{\omega_{k +1/2}}(\v{x}_{k+1}), \v{z} - \v{z}_k \right\rangle 
				- \frac{1}{2\gamma_k} \| \v{z} - \v{z}_k \|^2. 
%			\right\} 
			\label{eq:zUpdate}
		\end{alignat}
	}
	\caption{Primal-dual stochastic subgradient method for $\PE$.}
	\label{alg:PDSS}
\end{algorithm}
\end{figure*}
}
The notation $\langle \cdot, \cdot \rangle$ stands for the usual inner product in Euclidean space and $\| \cdot \|$ denotes the induced $\ell_2$-norm. \rev{Here, $\nabla h(\v{x})$ stands for a subgradient of an arbitrary convex function $h$ at $\v{x}$. For our analysis, the subgradient in Algorithm \ref{alg:PDSS} for functions $f_\omega(\v{x})$ and $g_\omega(\v{x})$ can be arbitrary elements of the closed convex subdifferential sets $\partial f_\omega(\v{x})$ and $\partial g_\omega(\v{x})$, respectively. We assume that these subdifferential sets are nonempty everywhere in $\Xset$.} 
% The subscripts of $\omega$ are suppressed in the sequel for notational convenience. 

\rev{The primal-dual method in Algorithm \ref{alg:PDSS} leverages Lagrangian duality theory. Specifically, define the Lagrangian function for $\PE$ as
\begin{align}
\Lcal(\v{x}, \v{z}) := F(\v{x}) + \v{z}^{\T} \v{G}(\v{x}) = \E[ \Lcal_\omega(\v{x}, \v{z})],
\label{eq:P.L}
\end{align}
for $\v{x} \in \Xset$, $\v{z} \in \Rset^m_+$, where
\whencolumns{$\Lcal_\omega(\v{x}, \v{z}) \coloneqq f_\omega(\v{x}) + \v{z}^{\T} \v{g}_\omega(\v{x})$.}
{$$\Lcal_\omega(\v{x}, \v{z}) \coloneqq f_\omega(\v{x}) + \v{z}^{\T} \v{g}_\omega(\v{x}).$$}
Then, $\PE$ admits the standard reformulation as a min-max problem of the form 
\begin{align}
    \pEstar \coloneqq \min_{\v{x} \in \Xset} \max_{\v{z}\in\Rset^m_+} \Lcal(\v{x}, \v{z}).
    \label{eq:PE.primal}
\end{align}
Denote its optimal set by $\Xset_\star \subseteq \Xset$.
Define the dual problem of $\PE$ as
\begin{align}
    \dEstar \coloneqq \max_{\v{z}\in\Rset^m_+} \min_{\v{x} \in \Xset}  \Lcal(\v{x}, \v{z}).
    \label{eq:PE.dual}
\end{align}
Denote its optimal set by $\Zset_\star \subseteq \Rset^m_+$.
Weak duality then guarantees $\pEstar \geq \dEstar$. When the inequality is met with an equality, the problem is said to satisfy strong duality. A point $(\v{x}_\star, \v{z}_\star) \in \Xset \times \Rset^m_+$ is a saddle point of $\Lcal$ if
\begin{align}
    \Lcal(\v{x}_\star, \v{z}) \leq \Lcal(\v{x}_\star, \v{z}_\star) \leq \Lcal(\v{x}, \v{z}_\star)
    \label{eq:saddle}
\end{align}
for all $(\v{x}, \v{z}) \in \Xset \times \Rset^m_+$. The following well-known saddle point theorem (see \cite[Theorem 2.156]{bonnans2013perturbation}) relates saddle points with primal-dual optimal solutions.}
% relates $\Xset_\star \times \Zset_*$ to the set of saddle points.
\begin{theoremnn}[Saddle point theorem]
\rev{A saddle point of $\Lcal$ exists if and only if $\PE$ satisfies strong duality, i.e., $\pEstar = \dEstar$. Moreover, the set of saddle points of $\Lcal$ is given by $\Xset_\star \times \Zset_\star$.}
\end{theoremnn}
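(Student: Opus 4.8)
The plan is to prove this as a purely order-theoretic consequence of the minimax structure; no convexity of $F$ or $\v{G}$ is actually needed beyond what is already encoded in the definitions of $\pEstar$ and $\dEstar$. The engine of the whole argument is the pair of trivial bounds that hold for \emph{any} $\v{x} \in \Xset$ and $\v{z} \in \Rset^m_+$, namely $\Lcal(\v{x}, \v{z}) \le \max_{\v{z}'} \Lcal(\v{x}, \v{z}')$ and $\Lcal(\v{x}, \v{z}) \ge \min_{\v{x}'} \Lcal(\v{x}', \v{z})$, together with weak duality $\pEstar \ge \dEstar$ recorded above.

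For the forward (``only if'') direction, I would start from a saddle point $(\v{x}_\star, \v{z}_\star)$ and read off the two defining inequalities. The left inequality $\Lcal(\v{x}_\star, \v{z}) \le \Lcal(\v{x}_\star, \v{z}_\star)$ for all $\v{z}$ says that $\v{z}_\star$ attains $\max_{\v{z}} \Lcal(\v{x}_\star, \v{z})$, so this maximum equals $\Lcal(\v{x}_\star, \v{z}_\star)$; the right inequality similarly gives $\min_{\v{x}} \Lcal(\v{x}, \v{z}_\star) = \Lcal(\v{x}_\star, \v{z}_\star)$. Chaining these through the definitions yields
$$\pEstar \le \max_{\v{z}} \Lcal(\v{x}_\star, \v{z}) = \Lcal(\v{x}_\star, \v{z}_\star) = \min_{\v{x}} \Lcal(\v{x}, \v{z}_\star) \le \dEstar,$$
which, combined with weak duality, forces $\pEstar = \dEstar = \Lcal(\v{x}_\star, \v{z}_\star)$. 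The same chain simultaneously certifies $\v{x}_\star \in \Xset_\star$ and $\v{z}_\star \in \Zset_\star$, giving the inclusion of the saddle set into $\Xset_\star \times \Zset_\star$.

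For the reverse (``if'') direction, I would take any $\v{x}_\star \in \Xset_\star$ and $\v{z}_\star \in \Zset_\star$ and show the pair is a saddle point. The two trivial bounds applied at these points sandwich $\Lcal(\v{x}_\star, \v{z}_\star)$ between $\dEstar$ and $\pEstar$, and strong duality collapses the sandwich to $\Lcal(\v{x}_\star, \v{z}_\star) = \pEstar = \dEstar$. Feeding this equality back into $\max_{\v{z}} \Lcal(\v{x}_\star, \v{z}) = \pEstar$ and $\min_{\v{x}} \Lcal(\v{x}, \v{z}_\star) = \dEstar$ recovers exactly the two saddle inequalities. This both produces a saddle point whenever $\Xset_\star, \Zset_\star$ are nonempty and establishes the reverse inclusion $\Xset_\star \times \Zset_\star \subseteq$ (saddle set), completing the set identity.

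The one place that needs care --- and the main obstacle --- is the precise meaning of ``exists'' in the equivalence. The sandwich argument shows that strong duality turns any primal-dual optimal pair into a saddle point, so the delicate content is that $\pEstar = \dEstar$ must guarantee $\Xset_\star$ and $\Zset_\star$ are nonempty in the first place; this is exactly what the $\min/\max$ (as opposed to $\inf/\sup$) notation in the definitions of $\pEstar, \dEstar$ presupposes. I would therefore make explicit that attainment of both extrema is part of the strong-duality hypothesis here (consistent with the cited saddle point theorem), so that the ``if'' direction does not silently require a separate compactness or coercivity argument.
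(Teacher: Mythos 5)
Your proof is correct, and it is worth noting that the paper itself does not prove this statement: it is quoted as a known result with a citation to \cite{bonnans2013perturbation} (Theorem 2.156), so what you have written is a self-contained proof of the cited fact. Your argument is the standard order-theoretic one, with the forward direction driven by the sandwich
\begin{align*}
\pEstar \;\le\; \max_{\v{z} \in \Rset^m_+} \Lcal(\v{x}_\star, \v{z}) \;=\; \Lcal(\v{x}_\star, \v{z}_\star) \;=\; \min_{\v{x} \in \Xset} \Lcal(\v{x}, \v{z}_\star) \;\le\; \dEstar,
\end{align*}
collapsed by weak duality, and the backward direction by the reversed sandwich $\dEstar \le \Lcal(\v{x}_\star, \v{z}_\star) \le \pEstar$ collapsed by the strong-duality hypothesis; the two inclusions together give the set identity $\Xset_\star \times \Zset_\star$. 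Relative to deferring to the reference, your route buys transparency: it makes plain that the theorem uses no convexity of $F$ or $\v{G}$ and no structure of $\Xset$ or $\Rset^m_+$ beyond the definitions of $\pEstar$, $\dEstar$ and weak duality, so it holds for arbitrary bifunctions --- whereas the Bonnans--Shapiro framework is far more general machinery than is needed here. Your closing caveat is exactly the right one and is the only genuinely delicate point: since the paper writes $\pEstar$ and $\dEstar$ with $\min$ and $\max$, attainment of both extrema is built into the hypothesis, and the ``if'' direction would be false under the weaker reading of strong duality as equality of $\inf/\sup$ values (zero gap with an unattained primal or dual optimum gives an empty saddle set even though the values agree). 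Two small refinements you could make explicit: first, in the forward direction attainment needs no hypothesis at all --- your chain itself shows a saddle point forces both optima to be attained at $(\v{x}_\star, \v{z}_\star)$, which is why the equivalence is self-consistent; second, since the paper assumes $\E[\,| f_\omega(\v{x}) |\,]$ and $\E[\,| g^i_\omega(\v{x}) |\,]$ finite, $\Lcal$ is real-valued on $\Xset \times \Rset^m_+$, so every attained extremum in your chain is finite and the argument never manipulates $\pm\infty$ (note in particular that for fixed $\v{x}$ the inner maximum over $\v{z}$ is attained precisely when $\v{G}(\v{x}) \le 0$, so attainment in $\pEstar$ already encodes primal feasibility).
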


% \rev{We remark that \eqref{eq:saddle} establishes that our assumed Karush-Kuhn-Tucker point $(\v{x}^\star, \v{z}^\star)$ is in fact a saddle point. Owing to the saddle-point theorem (see [Theorem 2.156]\cite{bonnans2013perturbation}), then $\v{x}^\star$ is in fact an optimal solution of $\PE$, meaning that $F(\v{x}^*)$ is the optimal value of $\PE$.}

Our convergence analysis of Algorithm \ref{alg:PDSS} requires $\PE$ to satisfy the following properties.

\begin{assumption}\ 
% We make the following assumptions:
\begin{enumerate}[label=(\alph*)]
\item Subgradients of $F$ and $\v{G}$ are bounded, i.e.,
\whencolumns{
$\| \nabla F(\v{x}) \| \leq C_F$, $\|\nabla G^i(\v{x}) \| \leq C_G^i$ for each $i = 1, \dots, m$ and all $\v{x} \in \Xset$.
}{
$$
\| \nabla F(\v{x}) \| \leq C_F, \quad 
\|\nabla G^i(\v{x}) \| \leq C_G^i
$$
for each $i = 1, \dots m$ and all $\v{x} \in \Xset$.
}

\item $\nabla f_\omega$ and $\nabla g_\omega^i$ for $i = 1, \dots, m$ have bounded variance, i.e., 
\whencolumns{
$\E \| \nabla f_\omega(\v{x}) - \E [\nabla f_\omega(\v{x})] \|^2 \leq \sigma_F^2$ and $\E \| \nabla g_\omega^i(\v{x}) - \E [\nabla g_\omega^i(\v{x})] \|^2 \leq [\sigma_G^i]^2$ for all $\v{x} \in \Xset$.
}{
for all $\v{x} \in \Xset$,
\begin{figure*}
\begin{aligned}
	\E \| \nabla f_\omega(\v{x}) - \E [\nabla f_\omega(\v{x})] \|^2 &\leq \sigma_F^2, \\
	\E \| \nabla g_\omega^i(\v{x}) - \E [\nabla g_\omega^i(\v{x})] \|^2 &\leq [\sigma_G^i]^2.
\end{aligned}
\end{figure*}
}

\item $\v{g}_\omega(\v{x})$ has a bounded second moment, i.e., $\E \|  g_\omega^i(\v{x}) \|^2 \leq [D_G^i]^2$ for all $\v{x} \in \Xset$.

% \item Problem \eqref{eq:prob.P} admits a finite primal-dual optimizer ($\v{x}_\star, \v{z}_\star) \in \Xset \times \Rset_+^m$.

\item \rev{$\Lcal$ for $\PE$ admits a saddle point $(\v{x}_\star, \v{z}_\star) \in \Xset \times \Rset^m_+$.}

\end{enumerate}
\label{assumptions}
\end{assumption}

The subgradient of $F$ and the variance of its noisy estimate are assumed bounded. Such an assumption is standard in the convergence analysis of unconstrained stochastic subgradient methods. The assumptions regarding $\v{G}$ are similar, but we additionally require the second moment of the noisy estimate of $\v{G}$ to be bounded over $\Xset$. Boundedness of $\v{G}$ in primal-dual subgradient methods has appeared in prior literature, e.g., in \cite{xu2018primal,yu2017online}.
%For example, the finitely many deterministic constraints considered in \cite{xu2018primal} are assumed bounded over $\Xset$, and the noisy estimates $\v{g}_\omega$ in \cite{yu2017online} are assumed bounded over $\Xset$. We remark that 
The second moment remains bounded if ${g}^i_\omega$ is uniformly bounded over $\Xset$ and $\Omega$ for each $i$. It is also satisfied if $\v{G}$ remains bounded over $\Xset$ and its noisy estimate has a bounded variance. \rev{Convergence analysis of unconstrained optimization problems typically assumes the existence of a finite optimal solution. We extend that requirement to the existence of a saddle point in the primal-dual setting, which by the saddle point theorem is equivalent to the existence of finite primal and dual optimal solutions. A variety of conditions imply the existence of such a point; the next result delineates two such sufficient conditions in (a) and (b), where (a) implies (b).}
\begin{lemma}[Sufficient conditions for existence of a saddle point]
\rev{$\Lcal$ for $\PE$ admits a saddle point, if either of the following conditions hold:}
\begin{enumerate}[label=(\alph*)]
    \item \rev{$\Xset_\star$ is nonempty, $\pEstar$ is finite and Slater's constraint qualification holds, i.e., there exists $ \v{x}$ in the relative interior of $\Xset$ for which $\v{G}(\v{x}) < 0$.
    % Then, strong duality is satisfied and $\Zset_\star$ is nonempty.
    % , convex, and bounded.
    }
    \item \rev{$\PE$ admits a finite ($\v{x}_\star, \v{z}_\star) \in \Xset \times \Rset_+^m$ that satisfies the Karush-Kuhn-Tucker (KKT) conditions given by
\begin{gather}
\begin{gathered}
    0 \in \partial_x \Lcal(\v{x}_\star, \v{z}_\star) + \Ncal_{\Xset}(\v{x}_\star), \;
    {G}^i(\v{x}_\star) \le 0, \; z^i_\star G^i(\v{x}_\star) = 0
\end{gathered}
\label{eq:KKT}
\end{gather}
for $i = 1, \dots, m$,
where $\Ncal_{\Xset}(\v{x}_\star)$ denotes the normal cone of $\Xset$ at $\v{x}_\star$.}
\end{enumerate}
\label{lem:saddle}
\end{lemma}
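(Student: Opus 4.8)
The plan is to establish the chain (a) $\Rightarrow$ (b) $\Rightarrow$ (existence of a saddle point), which proves both sufficiency claims at once, since (a) is the stronger hypothesis. The second link is the more elementary, so I would dispatch it first.

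\textbf{From (b) to a saddle point.} Suppose $(\v{x}_\star, \v{z}_\star)$ satisfies the KKT conditions \eqref{eq:KKT}. I would verify the two halves of the saddle inequality \eqref{eq:saddle} separately. For the right inequality, observe that $\Lcal(\cdot, \v{z}_\star) = F + \v{z}_\star^{\T} \v{G}$ is convex on $\Xset$, because $F$ and each $G^i$ are convex and $\v{z}_\star \geq 0$. The stationarity condition $0 \in \partial_x \Lcal(\v{x}_\star, \v{z}_\star) + \Ncal_{\Xset}(\v{x}_\star)$ is precisely the first-order optimality condition for minimizing this convex function over the convex set $\Xset$; hence $\v{x}_\star$ is a global minimizer and $\Lcal(\v{x}_\star, \v{z}_\star) \leq \Lcal(\v{x}, \v{z}_\star)$ for all $\v{x} \in \Xset$. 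For the left inequality, fix any $\v{z} \in \Rset^m_+$. Primal feasibility $\v{G}(\v{x}_\star) \leq 0$ together with $\v{z} \geq 0$ gives $\v{z}^{\T} \v{G}(\v{x}_\star) \leq 0$, while complementary slackness gives $\v{z}_\star^{\T} \v{G}(\v{x}_\star) = 0$; therefore $\Lcal(\v{x}_\star, \v{z}) = F(\v{x}_\star) + \v{z}^{\T} \v{G}(\v{x}_\star) \leq F(\v{x}_\star) = \Lcal(\v{x}_\star, \v{z}_\star)$.

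\textbf{From (a) to (b).} This is the classical statement that Slater's constraint qualification yields Lagrange multipliers at an optimal primal solution. Fix any $\v{x}_\star \in \Xset_\star$ and introduce the convex perturbation (value) function $p(\v{u}) := \inf\{ F(\v{x}) : \v{x} \in \Xset, \ \v{G}(\v{x}) \leq \v{u} \}$. Because the Slater point $\hat{\v{x}}$ satisfies $\v{G}(\hat{\v{x}}) < 0$ strictly, the origin lies in the interior of the effective domain of $p$, so $p$ is subdifferentiable at $\v{0}$; finiteness of $\pEstar = p(\v{0})$ keeps the subgradient finite. Monotonicity of $p$ forces any subgradient to be nonpositive, giving a vector $\v{z}_\star \geq 0$, and the standard argument then yields strong duality $\pEstar = \min_{\v{x} \in \Xset} \Lcal(\v{x}, \v{z}_\star)$ with $\v{z}_\star$ dual optimal. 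Plugging $\v{x}_\star$ into this equality, feasibility and the no-gap property sandwich $\v{z}_\star^{\T}\v{G}(\v{x}_\star)$ to zero (complementary slackness), while the attainment of the minimum at $\v{x}_\star$ delivers the stationarity inclusion $0 \in \partial_x \Lcal(\v{x}_\star, \v{z}_\star) + \Ncal_{\Xset}(\v{x}_\star)$.

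\textbf{Main obstacle.} The delicate point throughout is the combination of nonsmoothness with the abstract feasible set $\Xset$: the subdifferential sum rule $\partial_x (F + \v{z}_\star^{\T} \v{G})(\v{x}_\star) = \partial F(\v{x}_\star) + \sum_i z^i_\star \partial G^i(\v{x}_\star)$ and the appearance of the normal cone $\Ncal_{\Xset}$ both require a qualification, and it is exactly the relative-interior form of Slater's condition that supplies it (a Moreau--Rockafellar type argument). I would invoke a standard convex-analysis reference for the subdifferentiability of $p$ at an interior point of its domain rather than reprove it, and take care that the Slater inequality is \emph{strict}, $\v{G}(\hat{\v{x}}) < 0$, since this is what keeps the separating hyperplane nonvertical and thereby forces the multiplier $\v{z}_\star$ to be genuinely finite rather than merely a recession direction.
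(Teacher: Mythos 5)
Your proposal is correct, but it takes a genuinely different route from the paper on part (a), so a comparison is in order. For part (b), your argument matches the paper's in substance: the paper unpacks the stationarity inclusion into subgradients $\nabla F(\v{x}_\star)\in\partial F(\v{x}_\star)$, $\nabla G^i(\v{x}_\star)\in\partial G^i(\v{x}_\star)$ and an element $\v{n}\in\Ncal_\Xset(\v{x}_\star)$ summing to zero, then adds the resulting convexity and normal-cone inequalities, whereas you apply first-order sufficiency to the aggregate convex function $\Lcal(\cdot,\v{z}_\star)$ directly; your version is if anything marginally cleaner, since the paper's decomposition step tacitly uses the equality form of the Moreau--Rockafellar sum rule, while the sufficiency direction you invoke needs no qualification at all. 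For part (a), the paper proves nothing: it simply cites \cite[Theorem 1.265]{bonnans2013perturbation}. Your perturbation-function argument (the Slater point places $\v{0}$ in the interior of the domain of $p$, properness and convexity give $\partial p(\v{0})\neq\emptyset$, monotonicity of $p$ forces $\v{z}_\star\geq 0$, and strong duality follows) is precisely the classical proof behind that citation, so it makes the lemma self-contained --- a genuine gain. Your chain (a) $\Rightarrow$ (b) $\Rightarrow$ saddle point also mirrors the paper's parenthetical remark that (a) implies (b).

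One structural observation: routing (a) through (b) is heavier than necessary, and it is exactly what creates the ``main obstacle'' you flag. Extracting $0\in\partial_x\Lcal(\v{x}_\star,\v{z}_\star)+\Ncal_\Xset(\v{x}_\star)$ from the fact that $\v{x}_\star$ minimizes $\Lcal(\cdot,\v{z}_\star)$ over $\Xset$ is the \emph{necessity} direction of the first-order condition, which requires a sum-rule qualification such as $\operatorname{ri}\bigl(\operatorname{dom}\Lcal(\cdot,\v{z}_\star)\bigr)\cap\operatorname{ri}(\Xset)\neq\emptyset$ --- not automatic here, since the paper only assumes $F,\v{G}$ finite on $\Xset$. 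But your value-function argument already delivers the saddle point without this step: it gives $\Lcal(\v{x},\v{z}_\star)\geq \pEstar$ for all $\v{x}\in\Xset$ together with $\v{z}_\star^{\T}\v{G}(\v{x}_\star)=0$, hence
\begin{align}
\Lcal(\v{x}_\star,\v{z})\;\leq\; F(\v{x}_\star)\;=\;\Lcal(\v{x}_\star,\v{z}_\star)\;\leq\;\Lcal(\v{x},\v{z}_\star)
\end{align}
for all $(\v{x},\v{z})\in\Xset\times\Rset^m_+$, which is \eqref{eq:saddle} with no subdifferential calculus whatsoever. Proving (a) $\Rightarrow$ saddle point directly, and (b) $\Rightarrow$ saddle point separately as you and the paper both do, would retain all of your content while eliminating the only delicate point in the write-up.
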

\begin{proof}
    \rev{Part (a) is a direct consequence of \cite[Theorem 1.265]{bonnans2013perturbation}. To prove part (b), notice that \eqref{eq:KKT}  ensures the existence of subgradients $\nabla F(\v{x}_\star) \in \partial F(\v{x}_\star)$, $\nabla {G}^i(\v{x}_\star) \in \partial {G}^i(\v{x}_\star)$, $i=1,\ldots,m$ and $\v{n} \in \Ncal_\Xset(\v{x}_\star)$ for which
    \begin{align}
    \nabla F(\v{x}_\star) + \sum_{i = 1}^m {z}_\star^i \nabla G^i(\v{x}_\star) + \v{n} = 0.
    \end{align}
    Then, for any $\v{x} \in \Xset$, we have
    \whencolumns{
    \begin{align}
    \underbrace{\langle \nabla F(\v{x}_\star), \v{x} - \v{x}_\star \rangle}_{\leq F(\v{x}) - F(\v{x}_\star)}
     + \sum_{i = 1}^m \underbrace{ {z}_\star^i \langle \nabla G^i(\v{x}_\star) , \v{x} - \v{x}_\star \rangle}_{\leq  {z}_\star^i [ G^i(\v{x}) - G^i(\v{x}_\star) ]}
     + \underbrace{\langle \v{n} , \v{x} - \v{x}_\star \rangle}_{\leq 0} = 0.
    \end{align}
    }{
    %\begin{align}
    %\left.
    %\begin{gathered}
    %\underbrace{\langle \nabla F(\v{x}_\star), \v{x} - \v{x}_\star \rangle}_{\leq F(\v{x}) - F(\v{x}_\star)}
    % + \sum_{i = 1}^m \underbrace{ {z}_\star^i \langle \nabla G^i(\v{x}_\star) , \v{x} - \v{x}_\star \rangle}_{\leq  {z}_\star^i [ G^i(\v{x}) - G^i(\v{x}_\star) ]}
    % \\ + \underbrace{\langle \v{n} , \v{x} - \v{x}_\star \rangle}_{\leq 0}
    %\end{gathered}
    %\right\}
    %\le 0
    %\end{align}
    %{\small
    \begin{align}
    0 &= \langle \nabla F(\v{x}_\star), \v{x} - \v{x}_\star \rangle
     + \sum_{i = 1}^m {z}_\star^i \langle \nabla G^i(\v{x}_\star) , \v{x} - \v{x}_\star \rangle \\
     & \qquad + \langle \v{n} , \v{x} - \v{x}_\star \rangle \\
     &\le F(\v{x}) - F(\v{x}_\star) + \sum_{i = 1}^m {z}_\star^i [ G^i(\v{x}) - G^i(\v{x}_\star) ].
    \end{align}
    %}
    }
    The inequalities in the above relation follow from the convexity of $F$ and ${G}^i$'s, nonnegativity of $\v{z}_\star$, and the definition of the normal cone. From the above inequalities, we conclude $\Lcal(\v{x}, \v{z}_\star) \geq \Lcal(\v{x}_\star, \v{z}_\star)$ for all $\v{x} \in \Xset$. Furthermore, for any $\v{z} \geq 0$, we have
    \begin{align}
    \Lcal(\v{x}_\star, \v{z}_\star) - \Lcal(\v{x}_\star, \v{z}) 
    = \v{z}_\star^{\T} \v{G}(\v{x}_\star)  - \v{z}^{\T} \v{G}(\v{x}_\star) \geq 0,
    \end{align}
    where the last step follows from the nonnegativity of $\v{z}$ and \eqref{eq:KKT}, completing the proof.} 
\end{proof}

% \rev{
% For convex problems, the KKT conditions are necessary for the existence of a saddle point. On the other hand, a saddle point for any problem satisfying Slater's condition also satisfies the KKT conditions. Thus, condition (a) in Lemma \ref{lem:saddle} guarantees condition (b), due to satisfaction of Slater's condition and the existence of a saddle point.}

% \rev{to the primal-dual setting to require the existence of a finite Karush-Kuhn-Tucker point. Alternatively, one can conclude the existence of such a point if one assumes that the optimization problem admits a finite optimizer and a regularity condition such as Slater's constraint qualification holds (see \cite[Theorem 2.165]{bonnans2013perturbation}).} 

%This is similar to the mirror-prox algorithm, except the $\v{z}$-update evaluates the stochastic subgradient at $\v{x}_{k+1}$ and not at $\v{x}_k$.

\rev{We now present our first main result} that provides a bound on the distance to optimality and constraint violation at a weighted average of the iterates generated by the algorithm on $\PE$ under Assumption \ref{assumptions}. Denote by $\v{C}_G$, $\v{D}_G$, and $\v{\sigma}_G$ the collections of $C_G^i$, $D_G^i$, and $\sigma_G^i$ respectively. We make use of the following notation.
\whencolumns{
\begin{gather}
\begin{gathered}
P_1 := 2 \| \v{x}_1 - \v{x}_\star \|^2 +  4\| \bone + \v{z}_\star \|^2, \\
\quad
P_2 := 8 (4C_F^2 + \sigma_F^2) + 2 \| \v{D}_G \|^2, 
\quad
P_3 := 8 m (4 \| \v{C}_G \|^2 + \| \v{\sigma}_G \|^2).
\end{gathered}
\label{eq:P123.def}
\end{gather}
}{
\begin{figure*}
\begin{aligned}
P_1 &:= 2 \| \v{x}_1 - \v{x}_\star \|^2 +  4\| \bone + \v{z}_\star \|^2, \\
P_2 &:= 8 (4C_F^2 + \sigma_F^2) + 2 \| \v{D}_G \|^2, \\
P_3 &:= 8 m (4 \| \v{C}_G \|^2 + \| \v{\sigma}_G \|^2).
\end{aligned}
\end{figure*}
}

\begin{theorem}[Convergence result for $\PE$] 
Suppose Assumption \ref{assumptions} holds.  For a positive sequence $\{\gamma_k\}_{k=1}^K$, if $P_3 \sum_{k=1}^K \gamma_k^2 < 1$, then the iterates generated by Algorithm \ref{alg:PDSS} satisfy
\whencolumns{
\begin{align}
\E[ F(\bar{\v{x}}_{K+1})] - \pEstar
&\leq   
	\frac{1}{4 \sum_{k=1}^K \gamma_k }  
		\left( \frac{P_1 + P_2 \sum_{k=1}^K \gamma_k^2}{1 - P_3 \sum_{k=1}^K \gamma_k^2} \right), 
	\label{eq:FBound}
	\\
\E[G^i(\bar{\v{x}}_{K+1})] 
& \leq
	\frac{1}{4 \sum_{k=1}^K \gamma_k }  
	\left( 
		\frac{P_1 + P_2 \sum_{k=1}^K \gamma_k^2}{1 - P_3 \sum_{k=1}^K \gamma_k^2}
	\right)
	\label{eq:GBound}
\end{align}
}{
\begin{align}
&\E[ F(\bar{\v{x}}_{K+1})] - \pEstar \notag \\
&\quad \leq   
	\frac{1}{4 \sum_{k=1}^K \gamma_k }  
		\left( \frac{P_1 + P_2 \sum_{k=1}^K \gamma_k^2}{1 - P_3 \sum_{k=1}^K \gamma_k^2} \right), 
	\label{eq:FBound}
	\\
&\E[G^i(\bar{\v{x}}_{K+1})] \notag \\
&\quad \leq
	\frac{1}{4 \sum_{k=1}^K \gamma_k }  
	\left( 
		\frac{P_1 + P_2 \sum_{k=1}^K \gamma_k^2}{1 - P_3 \sum_{k=1}^K \gamma_k^2}
	\right)
	\label{eq:GBound}
\end{align}
}
for each $i = 1, \dots, m$, where $\bar{\v{x}}_{K+1} := \frac{\sum_{k=1}^K \gamma_k \v{x}_{k+1}}{\sum_{k=1}^K \gamma_k}$.
Moreover, if $\gamma_k =\gamma/\sqrt{K}$ for $k=1,\ldots,K$ with $0 < \gamma < P_3^{-1/2}$, then 
\whencolumns{
\begin{align}
\E[ F(\bar{\v{x}}_{K+1})] - \pEstar 
\leq   
	\frac{\eta}{\sqrt{K}},  
\qquad
\E[G^i(\bar{\v{x}}_{K+1})] 
\leq
\frac{\eta}{\sqrt{K}}  
	\label{eq:bound.sqrtK}
\end{align}
}{
\begin{align}
\begin{aligned}
\E[ F(\bar{\v{x}}_{K+1})] - \pEstar 
&\leq   
	\frac{\eta}{\sqrt{K}},  
\\
\E[G^i(\bar{\v{x}}_{K+1})] 
&\leq
\frac{\eta}{\sqrt{K}}  
\end{aligned}
	\label{eq:bound.sqrtK}
\end{align}
}
for $i = 1,\ldots,m$, where $\eta := \frac{P_1 + P_2 \gamma^2}{4\gamma(1-P_3\gamma^2)}$.
\label{thm:exp}
\end{theorem}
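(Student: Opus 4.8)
The plan is to reduce both bounds to a single statement about a weighted saddle-point gap. Concretely, I would fix an arbitrary comparison multiplier $\v{z}\in\Rset_+^m$ and bound $\sum_{k=1}^K \gamma_k\left[\Lcal(\v{x}_{k+1},\v{z}) - \Lcal(\v{x}_\star,\v{z}_{k+1})\right]$. Since $\Lcal(\cdot,\v{z})$ is convex and $\Lcal(\v{x}_\star,\cdot)$ is affine, dividing by $\sum_k\gamma_k$ and applying Jensen's inequality transfers such a bound to $\Lcal(\bar{\v{x}}_{K+1},\v{z}) - \Lcal(\v{x}_\star,\bar{\v{z}}_{K+1})$, where $\bar{\v{z}}_{K+1}$ is the analogous weighted average of the dual iterates. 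Taking expectations and setting $\v{z}=0$, together with $\bar{\v{z}}_{K+1}^{\T}\v{G}(\v{x}_\star)\le 0$, yields the optimality bound \eqref{eq:FBound}; setting $\v{z}=\v{z}_\star + \v{e}_i$ and invoking the saddle inequality $\Lcal(\v{x}_\star,\v{z}_\star)\le\Lcal(\bar{\v{x}}_{K+1},\v{z}_\star)$ with $G^j(\v{x}_\star)\le0$ isolates $G^i(\bar{\v{x}}_{K+1})$ and gives \eqref{eq:GBound}. Because $\vnorm{\v{z}_\star+\v{e}_i}^2\le\vnorm{\v{z}_\star+\bone}^2$, all these choices are covered uniformly by the single constant $\vnorm{\bone+\v{z}_\star}^2$ in $P_1$.

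For the per-step analysis, I would write each update as a proximal step and apply the standard three-point inequality. For the primal step with $\v{x}=\v{x}_\star$, this bounds $\langle\v{q}_k,\v{x}_{k+1}-\v{x}_\star\rangle$ (with $\v{q}_k := \nabla f_{\omega_k}(\v{x}_k)+\sum_i z_k^i\nabla g^i_{\omega_k}(\v{x}_k)$) by a telescoping difference of $\vnorm{\v{x}-\v{x}_\star}^2$ terms minus $\vnorm{\v{x}_{k+1}-\v{x}_k}^2$. Convexity of the per-sample Lagrangian turns the inner product into a Lagrangian gap at $\v{x}_k$, and the bounded-subgradient assumption (hence Lipschitzness of $F$ and $\v{G}$) lets me re-express this gap at $\v{x}_{k+1}$ at the price of an $O(\gamma_k\vnorm{\v{q}_k})$ term that the $-\vnorm{\v{x}_{k+1}-\v{x}_k}^2$ slack absorbs. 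The linchpin is the dual step: splitting $\langle\v{g}_{\omega_{k+1/2}}(\v{x}_{k+1}),\v{z}-\v{z}_k\rangle$ off from its prox term and taking conditional expectation, the \emph{fresh} sample $\omega_{k+1/2}$, independent of everything determining $\v{x}_{k+1}$, makes $\E[\v{g}_{\omega_{k+1/2}}(\v{x}_{k+1})\mid\cdot]=\v{G}(\v{x}_{k+1})$ exactly unbiased. This is precisely what the Gauss-Seidel double sampling buys: the dual noise term collapses to the constant $\vnorm{\v{D}_G}^2$ of Assumption \ref{assumptions}(c), with no dependence on $\v{z}_k$ and no need for any a priori bound on the multipliers.

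The step I expect to be the crux is controlling the primal noise term $\E\vnorm{\v{q}_k}^2$, which, unlike the dual one, genuinely scales with $\vnorm{\v{z}_k}^2$ and so is not bounded a priori. Here I would use $\vnorm{\v{z}_k}^2\le 2\vnorm{\v{z}_k-(\v{z}_\star+\bone)}^2 + 2\vnorm{\bone+\v{z}_\star}^2$ so that, after summing, the offending $\sum_k\gamma_k^2\vnorm{\v{z}_k}^2$ contribution is re-expressed through the very $\vnorm{\v{z}_k-(\v{z}_\star+\bone)}^2$ quantities that telescope on the dual side. Bounding these by the running potential and moving the resulting self-referential term to the left-hand side is what forces $P_3\sum_k\gamma_k^2<1$ and produces the characteristic factor $1/(1-P_3\sum_k\gamma_k^2)$ after division. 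Telescoping the distance terms with $\v{x}_1$ arbitrary and $\v{z}_1=0$ contributes $P_1=2\vnorm{\v{x}_1-\v{x}_\star}^2+4\vnorm{\bone+\v{z}_\star}^2$, while the constant (i.e.\ $\v{z}$-independent) pieces of the two noise terms accumulate into $P_2\sum_k\gamma_k^2$; assembling everything gives \eqref{eq:FBound} and \eqref{eq:GBound}.

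Finally, I would specialize to $\gamma_k=\gamma/\sqrt{K}$, for which $\sum_{k=1}^K\gamma_k=\gamma\sqrt{K}$ and $\sum_{k=1}^K\gamma_k^2=\gamma^2$; the condition $0<\gamma<P_3^{-1/2}$ is exactly $P_3\sum_k\gamma_k^2<1$, keeping the denominator positive. Substituting into \eqref{eq:FBound}--\eqref{eq:GBound} collapses the right-hand side to $\eta/\sqrt{K}$ with $\eta=(P_1+P_2\gamma^2)/(4\gamma(1-P_3\gamma^2))$, establishing \eqref{eq:bound.sqrtK}. The main obstacle throughout is this self-bounding of the primal gradient term against the dual potential; the double-sampling unbiasedness of the previous paragraph is what makes that bootstrapping possible without ever constraining $\v{z}_k$.
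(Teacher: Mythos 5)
Your overall route is the paper's own: a Gauss--Seidel dissipation inequality whose dual-noise term collapses to $\vnorm{\v{D}_G}^2$ because the fresh sample $\omega_{k+1/2}$ is conditionally unbiased at $\v{x}_{k+1}$; a self-bounding argument for $\E\vnorm{\v{z}_k}^2$ producing the $1-P_3\sum_{k}\gamma_k^2$ denominator; and evaluation of the averaged Lagrangian gap at $\v{z}=0$ and $\v{z}=\v{z}_\star+\v{e}_i$, with $\vnorm{\bone+\v{z}_\star}^2$ built into $P_1$ to absorb $\frac{1}{2}\vnorm{\v{z}}^2$. The paper phrases the multiplier bound as an induction over $\kappa$ rather than your move-to-the-left-hand-side rearrangement, but the algebra is equivalent; likewise your per-step gap at $\Lcal(\v{x}_\star,\v{z}_{k+1})$ versus the paper's $\Lcal(\v{x}_\star,\v{z}_k)$ is a cosmetic slip --- the natural derivation lands at $\v{z}_k$, since $\v{z}_{k+1}$ is correlated with $\omega_{k+1/2}$ and splitting off $\v{z}_{k+1}-\v{z}_k$ is exactly how the unbiasedness is deployed --- and in the endgame either index works because you only use $\Lcal(\v{x}_\star,\cdot)\le\pEstar$ pathwise (the paper instead uses the saddle inequality $\Lcal(\v{x}_\star,\v{z}_k)\le\Lcal(\v{x}_\star,\v{z}_\star)=\pEstar$, same substance).

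There is, however, one step that fails as written: in the bootstrap for $\E\vnorm{\v{z}_k}^2$ you center the dual potential at $\v{z}_\star+\bone$, writing $\vnorm{\v{z}_k}^2\le 2\vnorm{\v{z}_k-(\v{z}_\star+\bone)}^2+2\vnorm{\bone+\v{z}_\star}^2$ and asserting that the quantities $\vnorm{\v{z}_k-(\v{z}_\star+\bone)}^2$ telescope on the dual side. They do not telescope usefully: summing the dissipation inequality with comparison point $\v{z}=\v{z}_\star+\bone$ requires dropping the gap terms $\gamma_k\,\E[\Lcal(\v{x}_{k+1},\v{z}_\star+\bone)-\Lcal(\v{x}_\star,\v{z}_k)]$, and
\begin{align*}
\Lcal(\v{x}_{k+1},\v{z}_\star+\bone)-\Lcal(\v{x}_\star,\v{z}_k)
= \underbrace{\Lcal(\v{x}_{k+1},\v{z}_\star)-\Lcal(\v{x}_\star,\v{z}_k)}_{\ge 0 \text{ by the saddle property}} + \bone^{\T}\v{G}(\v{x}_{k+1}),
\end{align*}
where $\bone^{\T}\v{G}(\v{x}_{k+1})$ has no sign. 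The best a priori control, $\bone^{\T}\v{G}(\v{x}_{k+1})\ge -\sum_{i} D_G^i$ (via Jensen and Assumption \ref{assumptions}(c)), injects a deficit of order $\sum_{k\le\kappa}\gamma_k$ into the potential; with $\gamma_k=\gamma/\sqrt{K}$ this makes the resulting bound on $\E\vnorm{\v{z}_k}^2$ grow like $\sqrt{K}$ instead of staying constant, and the $\tfrac14 P_3\gamma_k^2\,\E\vnorm{\v{z}_k}^2$ terms then contribute $O(1)$ rather than $O(1/\sqrt{K})$ after dividing by $4\sum_k\gamma_k$ --- the rate is lost. The fix is precisely the paper's step (b): center the telescoping potential at $\v{z}_\star$ itself, where the gap $\Lcal(\v{x}_{k+1},\v{z}_\star)-\Lcal(\v{x}_\star,\v{z}_k)$ is nonnegative and can be discarded, use $\vnorm{\v{z}_k}^2\le 2\vnorm{\v{z}_k-\v{z}_\star}^2+2\vnorm{\v{z}_\star}^2$, and note $\vnorm{\v{z}_\star}\le\vnorm{\bone+\v{z}_\star}$ so that your $P_1=2\vnorm{\v{x}_1-\v{x}_\star}^2+4\vnorm{\bone+\v{z}_\star}^2$ still covers the constants. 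In short, the $\bone$ in $P_1$ serves only the final absorption of $\frac{1}{2}\vnorm{\v{z}}^2$ at $\v{z}=\v{e}_i+\v{z}_\star$; it must not be moved into the center of the telescoping dual potential.
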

A constant step-size of $\eta/\sqrt{K}$ over a fixed number of $K$ iterations yields the $\Ocal(1/\sqrt{K})$ decay rate in the expected distance to optimality and constraint violation of Algorithm \ref{alg:PDSS}. This is indeed order optimal, as implied by Nesterov's celebrated result in \cite[Theorem 3.2.1]{nesterov1998introductory}. 
% showed a lower bound of $\Ocal(1/\sqrt{K})$ for convergence of first-order methods. 
%, matched by Algorithm \ref{alg:PDSS}.
%The acheived rate of decay is rate-optimal for $\PCVaR$. To illustrate, consider the special case where $\Omega$ is a singleton, which reduces $\PCVaR$ to a primal-dual subgradient method. Nesterov in \cite[Theorem 3.2.1]{nesterov1998introductory} showed that such methods have an optimal rate of $\Ocal(1/\sqrt{K})$.

%We remark that while we explicitly analyze a constant step-size over a pre-determined number of iterations, the effect of decaying step-sizes such as $\gamma/\sqrt{k}$ can also be analyzed using Theorem \ref{thm:exp}.

\begin{remark}
While we present the proof for an i.i.d. sequence of samples, we believe that the result can be extended to the case where $\omega$'s follow a Markov chain with  geometric mixing rate following the technique in \cite{sun2018markov}. For such settings, the expectations in the definition of ${F}, \v{G}$ should be computed with respect to the stationary distribution of the chain. The results will then possibly apply to Markov decision processes with applications in stochastic control.
\end{remark}

Given that the literature on primal-dual subgradient methods is extensive, it is important for us to relate and distinguish Algorithm \ref{alg:PDSS} and Theorem \ref{thm:exp} with prior work.
% We begin by defining the Lagrangian function for $\PE$ as
% \begin{align*}
% \Lcal(\v{x}, \v{z}) := F(\v{x}) + \v{z}^{\T} \v{G}(\v{x}) = \E[ \Lcal_\omega(\v{x}, \v{z})],
% \end{align*}
% for $\v{x} \in \Xset$, $\v{z} \in \Rset^m_+$, where
% \whencolumns{$\Lcal_\omega(\v{x}, \v{z}) \coloneqq f_\omega(\v{x}) + \v{z}^{\T} \v{g}_\omega(\v{x})$.}
% {$$\Lcal_\omega(\v{x}, \v{z}) \coloneqq f_\omega(\v{x}) + \v{z}^{\T} \v{g}_\omega(\v{x}).$$}
% Then,
\rev{Using the Lagrangian in \eqref{eq:P.L},}
Algorithm \ref{alg:PDSS} can be written as 
\begin{equation}
\begin{aligned}
	\v{x}_{k+1} &\coloneqq \proj_{\Xset} [\v{x}_k - \gamma_k \nabla_x \Lcal_\omega (\v{x}_k, \v{z}_k)], 
	\\
	\v{z}_{k+1} &\coloneqq \proj_{{\Rset_+^m}} [\v{z}_k + \gamma_k \nabla_z \Lcal_\omega (\v{x}_{k+1}, \v{z}_k)],
\end{aligned}
\label{eq:proj.alg}
\end{equation}
where $\proj_\Aset$ projects its argument on set $\Aset$. The vectors $\nabla_x \Lcal_\omega$ and $\nabla_z \Lcal_\omega$ are stochastic subgradients of the Lagrangian function with respect to $\v{x}$ and $\v{z}$, respectively. Therefore, Algorithm \ref{alg:PDSS} is a projected stochastic subgradient algorithm that seeks to solve the saddle-point reformulation of $\PE$ \rev{in \eqref{eq:PE.primal}}. Implicit in our algorithm is the assumption that projection on $\Xset$ is computationally easy. Any functional constraints describing $\Xset$ that makes such projection challenging should be included in $\v{G}$.

Closest in spirit to our work on $\PE$ are the papers by Baes et al. in \cite{baes2013randomized}, Yu et al. in \cite{yu2017online}, Xu in \cite{xu2018primal}, and Nedic and Ozdaglar in \cite{nedic2009subgradient}. 
Stochastic mirror-prox algorithm in \cite{baes2013randomized} and projected subgradient method in \cite{nedic2009subgradient} are similar in their updates to ours except in two ways. First, these algorithms in the context of $\PE$ update the dual variable $\v{z}_k$ based on $\v{G}$ or its noisy estimate evaluated at $\v{x}_k$, while we update it based on the estimate at $\v{x}_{k+1}$. Second, both project the dual variable on a compact subset of $\Rset^m_+$ that contains the optimal set of dual multipliers. While authors in \cite{baes2013randomized} assume an a priori set to project on, authors in \cite{nedic2009subgradient} compute such a set from a ``Slater point'' that satisfies $\v{G}(\v{x}) < 0$. \rev{Specifically, Slater's condition guarantees that the set of optimal dual solutions $\Zset_\star$ is bounded (see  \cite[Theorem 1.265]{bonnans2013perturbation}, \cite{hiriart2013convex}). Moreover, a Slater point can be used to construct a compact set that contains $\Zset_\star$, e.g., using  \cite[Lemma 4.1]{nedic2009subgradient}. While one can project dual variables on such a set in each iteration, execution of the algorithm then requires a priori knowledge of such a point. 
We do not assume knowledge of such a point (or any explicit bound on $\Zset_\star$) to execute Algorithm \ref{alg:PDSS}. Rather, our proof provides an explicit bound on the growth of the dual variable sequence for Algorithm \ref{alg:PDSS},} much in line with Xu's analysis in \cite{xu2018primal}. Much to our surprise, a minor modification of using a Gauss-Seidel style dual update as opposed to the popular Jacobi style dual update obviates the need for 
this crucial assumption in the literature for the proofs to work. 
% Specifically, the Jacobi style update requires smoothness of the objective and constraints, as well as an explicit a priori bound of the optimal dual multiplier. The latter is an inherent difficulty with existing algorithms and the removal of this assumption was a primary motivator for this work.
% \bose{What is this smoothness assumption that we are talking about?}
Unfortunately, our Gauss-Seidel style dual update comes at an additional cost of an extra sample required per iteration of the primal-dual algorithm, making the sample complexity double of the iteration complexity. The constant factor of two, however, does not impact the order-wise complexity. We surmise that the additional sample and the Gauss-Seidel update of the dual variable helps to decouple the analysis of the primal and dual updates and points to a possible extension of our result to an asynchronous setting, often useful in engineering applications.

While sharing some parallels, our work has an important difference with that in \cite{xu2018primal}. Xu considers a collection of deterministic constraint functions, i.e., $\v{g}^\omega$ is identical for all $\omega \in \Omega$, and considers a modified augmented Lagrangian function of the form $\tilde{\Lcal}(\v{x}, \v{z}) := F(\v{x}) + \frac{1}{m} \sum_{i=1}^m \varphi_\delta(\v{x}, z^i)$, where
%{\small
\begin{align}
\varphi_\delta(\v{x}, z^i) :=
\begin{cases}
z^i g^i(\v{x}) + \frac{\delta}{2} [g^i(\v{x})]^2,
& \text{if } \delta g^i(\v{x}) + z^i \geq 0, \\
-\frac{(z^i)^2}{2\delta}, 
& \text{otherwise}
\end{cases}
\end{align}
%}
for $i = 1,\ldots,m$ with a suitable time-varying sequence of $\delta$'s. His algorithm is similar to Algorithm \ref{alg:PDSS} but performs a randomized coordinate update for the dual variable instead of \eqref{eq:zUpdate}. To the best of our knowledge, Xu's analysis in \cite{xu2018primal} with such a Lagrangian function does not directly apply to our setting with stochastic constraints that is crucial for the subsequent analysis of the risk-sensitive problem $\PCVaR$.

Finally, Yu et al.'s work in \cite{yu2017online} provides an analysis of the algorithm that updates its dual variables using 
\begin{alignat}{1}
		\v{z}_{k+1} 
		&\coloneqq 
		\argmax_{\v{z} \in \Rset_+^m}  
			\left\langle \v{v}_k, \v{z} - \v{z}_k \right\rangle 
				- \frac{1}{2\gamma_k} \| \v{z} - \v{z}_k \|^2, \label{eq:zUpdateNeely}
\end{alignat}
where ${v}^i := {g}^i_{\omega_{k}}(\v{x}_{k}) + \langle \nabla {g}^i_{\omega_k}(\v{x}_k) ,\v{x}_{k+1} - \v{x}_k \rangle$ for $i=1,\ldots,m$. In contrast, our $\v{z}$-update in \eqref{eq:zUpdate} samples $\omega_{k+1/2}$ and sets $\v{v}_k := \v{g}_{\omega_{k+1/2}}(\v{x}_{k+1})$ at the already computed point $\v{x}_{k+1}$.
We are able to recover the $\Ocal(1/\sqrt{K})$ decay rate of suboptimality and constraint violation with a proof technique much closer to the classical analysis of subgradient methods in \cite{boyd2006subgradient,nedic2009subgradient}. Unlike \cite{yu2017online}, we provide a clean characterization of the constant $\eta$ in \eqref{eq:bound.sqrtK} that is crucial to study the growth in sample (and iteration) complexity of Algorithm \ref{alg:PDSS} applied to a reformulation of $\PCVaR$.

%We mention a parallel thread in the literature that investigates dual subgradient methods for constrained convex optimization, e.g., in \cite{gustavsson2015primal}. Such methods perform a subgradient update on $\v{z}_k$, but solves $\min_{\v{x} \in \Xset} \Lcal(\v{x}, \v{z}_k)$ possibly with a modified Lagrangian function at each iteration. Algorithm \ref{alg:PDSS} and our closely related papers refrain from solving the Lagrangian relaxation for the primal update, but rather track its solution via a first-order step in each iteration.

%----------------

\subsection{Proof of Theorem \ref{thm:exp}}
\label{sub:proof.thm1}

The proof proceeds in three steps. 
\begin{enumerate}[label=(\alph*)]
\item We establish the following dissipation inequality that consecutive iterates of the algorithm satisfy.
\whencolumns{
\begin{align}
	\begin{aligned}
		&\gamma_k  \E[\Lcal(\v{x}_{k+1}, \v{z})  - \Lcal(\v{x}, \v{z}_k)] 
		+ \frac{1}{2} \E\| \v{x}_{k+1} - \v{x} \|^2 
		+ \frac{1}{2} \E\| \v{z}_{k+1} - \v{z} \|^2 
		\\
		&\quad \le 
		\frac{1}{2} \E\| \v{x}_k - \v{x} \|^2 
		+ \frac{1}{2} \E \| \v{z}_k - \v{z} \|^2 
		+ \frac{1}{4}P_2\gamma_k^2
		+ \frac{1}{4}P_3 \gamma_k^2 \E \| \v{z}_k \|^2
	\end{aligned}
	\label{eq:1step}
\end{align}
}{
\begin{align}
\begin{aligned}
	&\gamma_k  \E[\Lcal(\v{x}_{k+1}, \v{z})  - \Lcal(\v{x}, \v{z}_k)] \\
	&\; + \frac{1}{2} \E\| \v{x}_{k+1} - \v{x} \|^2
	+ \frac{1}{2} \E\| \v{z}_{k+1} - \v{z} \|^2 
	\\
	& \quad \le 
	\frac{1}{2} \E\| \v{x}_k - \v{x} \|^2 
	+ \frac{1}{2} \E \| \v{z}_k - \v{z} \|^2 \\
	&\qquad + \frac{1}{4}P_2\gamma_k^2
	+ \frac{1}{4}P_3 \gamma_k^2 \E \| \v{z}_k \|^2
\end{aligned}
\label{eq:1step}
\end{align}
}
for any $\v{x} \in \Xset$ and $\v{z} \in \Rset_+^m$.

% \item We utilize the definition of the Lagrangian function, \rev{the convexity of $F, \v{G}$ and the Karush-Kuhn-Tucker conditions to show the saddle-point property, given by}  
% \begin{align}
% \Lcal(\v{x}, \v{z}_\star) \geq \Lcal(\v{x}_\star, \v{z}_\star) \geq \Lcal(\v{x}_\star, \v{z})
% \label{eq:saddle}
% \end{align}
% for each $\v{x} \in \Xset$ and $\v{z} \in \Rset_+^m$, where $(\v{x}_\star, \v{z}_\star)$ satisfies the Karush-Kuhn-Tucker conditions in \eqref{eq:KKT}.
% % \footnote{\rev{This step is a well-known consequence of the saddle-point theorem (see Theorem ...). We include its proof for completeness.}}
% % An optimal primal-dual pair is a saddle point for the Lagrangian function and satisfies \eqref{eq:saddle} for all $\v{x} \in \Xset, z \geq 0$. We explicitly derive \eqref{eq:saddle} from the Karush-Kuhn-Tucker conditions for completeness.

\item Next, we bound $\E\|\v{z}_k\|^2$ generated by our algorithm from above using step (a) as
\begin{align}
\E\|\v{z}_k\|^2 \leq \frac{P_1 + P_2 A_K}{1 - P_3 A_K}
\label{eq:zBound}
\end{align}
for $k = 1, \ldots, K$, where $A_K := \sum_{k=1}^K \gamma_k^2$.

\item We combine the results in steps (a) and (b) to complete the proof.
\end{enumerate}

Define the filtration $\Wcal_1 \subset \Wcal_{1+1/2} \subset \Wcal_2 \subset \ldots $, where $\Wcal_k$ is the $\sigma$-algebra generated by the samples $\omega_1, \ldots \omega_{k-1/2}$ 
for $k$ being multiples of $1/2$, starting from unity. Then, $\{ \v{x}_1, \v{z}_1, \dots, \v{x}_k, \v{z}_k \}$ becomes $\Wcal_k$-measurable, while $\{ \v{x}_1, \v{z}_1, \dots, \v{x}_k, \v{z}_k, \v{x}_{k+1} \}$ is $\Wcal_{k+1/2}$-measurable.

%----------

\noindent \emph{$\bullet$ Step (a) -- Proof of \eqref{eq:1step}\whencolumns{:}{}}
We first utilize the $\v{x}$-update in \eqref{eq:xUpdate} to prove
\begin{align}
\begin{aligned}
	&\E[ F(\v{x}_{k+1}) - F(\v{x}) + \v{z}_k^{\T} \v{G}(\v{x}_{k+1}) - \v{z}_k^{\T} \v{G}(\v{x}) | \Wcal_k]
	+ \frac{1}{2\gamma_k} \E[ \| \v{x}_{k+1} - \v{x} \|^2 | \Wcal_k ]
	\\
	&\quad \leq \frac{1}{2\gamma_k} \| \v{x}_k - \v{x} \|^2 
	 + 2\gamma_k(4C_F^2 + \sigma_F^2)
	 + 2\gamma_k m (4 \| \v{C}_G \|^2 + \| \v{\sigma}_G \|^2) \|\v{z}_k\|^2
\end{aligned}
	\label{eq:xTarget}
\end{align}
for all $\v{x} \in \Xset$. 
Then, we utilize the $\v{z}$-update in \eqref{eq:zUpdate} to prove
\begin{align}
\begin{aligned}
 	&\E[ \v{z}^{\T} \v{G}(\v{x}_{k+1}) - \v{z}_k^{\T} \v{G}(\v{x}_{k+1}) | \Wcal_k ]
	+ \frac{1}{2\gamma_k} \E[ \| \v{z}_{k+1} - \v{z} \|^2 | \Wcal_k ]
	\\ &\quad
	 \leq  \frac{1}{2\gamma_k} \| \v{z}_k - \v{z} \|^2
	 +\frac{\gamma_k}{2} \| \v{D}_G \|^2
\end{aligned}
	\label{eq:zTarget}
\end{align}
for all $\v{z} \in \Rset^m_+$. The law of total probability is then applied to the sum of \eqref{eq:xTarget} and \eqref{eq:zTarget} followed by a multiplication by $\gamma_k$ yielding the desired result in \eqref{eq:1step}.

\paragraph{Proof of \eqref{eq:xTarget}\whencolumns{:}{}}

The $\v{x}$-update in \eqref{eq:xUpdate} yields
\whencolumns{
\begin{align}
\left\langle \v{x}_{k+1} - \v{x}, \nabla f_\omega(\v{x}_k) + \sum_{i = 1}^m {z}_k^i \nabla \v{g}_\omega^i(\v{x}_k) + \frac{1}{\gamma_k} (\v{x}_{k+1} - \v{x}_k) \right\rangle \leq 0.
\label{eq:xUpdate.1}
\end{align}
}{
\begin{align}
\left\langle \v{x}_{k+1} - \v{x}, \Delta \v{x} \right\rangle \leq 0,
\label{eq:xUpdate.1}
\end{align}
where $\Delta \v{x}$ is given by
\begin{figure*}
	\Delta \v{x} = \nabla f_\omega(\v{x}_k) + \sum_{i = 1}^m {z}_k^i \nabla \v{g}_\omega^i(\v{x}_k) + \frac{1}{\gamma_k} (\v{x}_{k+1} - \v{x}_k).
\end{figure*}
}
We now simplify the inner product. The product with $\nabla f_\omega(\v{x}_k)$ can be expressed as
\begin{align}
\begin{aligned}
\langle \v{x}_{k+1} - \v{x}, \nabla f_\omega(\v{x}_k) \rangle 
=& \underbrace{\langle \v{x}_{k+1} - \v{x}_k, \nabla F(\v{x}_{k+1}) \rangle}_{\geq F(\v{x}_{k+1}) - F(\v{x}_k) } 
+ \langle \v{x}_k - \v{x}, \nabla f_\omega(\v{x}_k) \rangle \\
&\; - \langle \v{x}_{k+1} - \v{x}_k, \nabla F(\v{x}_{k+1}) - \nabla f_\omega(\v{x}_k) \rangle, 
\end{aligned}
\label{eq:xUpdate.2}
\end{align}
where $\nabla F(\v{x}_{k+1})$ denotes a subgradient of $F$ at $\v{x}_{k+1}$. The inequality for the first term follows from the convexity of $F$. Since $\E[\nabla f_\omega(\v{x}_k) | \Wcal_k] \in \partial F(\v{x}_k)$ from \cite{bertsekas1973stochastic}, the expectation of the second summand on the right hand side (RHS) of \eqref{eq:xUpdate.2} satisfies
\whencolumns{
\begin{align} 
\E [\langle \v{x}_k - \v{x}, \nabla f_\omega(\v{x}_k) \rangle | \Wcal_k] = \langle \v{x}_k - \v{x}, \nabla F(\v{x}_k) \rangle \geq F(\v{x}_k) - F(\v{x}).
\label{eq:xUpdate.T1}
\end{align}
}{
\begin{align} 
\begin{aligned}
\E [\langle \v{x}_k - \v{x}, \nabla f_\omega(\v{x}_k) \rangle | \Wcal_k] 
& = \langle \v{x}_k - \v{x}, \nabla F(\v{x}_k) \rangle \\
& \geq F(\v{x}_k) - F(\v{x}_\star).
\end{aligned}
\label{eq:xUpdate.T1}
\end{align}
}
Taking expectations in \eqref{eq:xUpdate.2}, the above relation implies
\begin{align}
\begin{aligned}
& \E[ \langle \v{x}_{k+1} - \v{x}, \nabla f_\omega(\v{x}_k) \rangle | \Wcal_k  ] \\
% & \quad 
% \geq \E[ F(\v{x}_{k+1}) - F(\v{x}_k) - \langle \v{x}_{k+1} - \v{x}_k, \nabla F(\v{x}_{k+1}) - \nabla f_\omega(\v{x}_k) \rangle
%  | \Wcal_k]
% + F(\v{x}_k) - F(\v{x}) \\
& \quad
\geq \E[ F(\v{x}_{k+1}) - F(\v{x}) - \langle \v{x}_{k+1} - \v{x}_k, \nabla F(\v{x}_{k+1}) - \nabla f_\omega(\v{x}_k) \rangle
 | \Wcal_k].
\end{aligned}
\label{eq:xUpdate.1.1}
\end{align}
%
% that upon simplification using the law of total probability yields
% \begin{align}
% \E[ \langle \v{x}_{k+1} - \v{x}, \nabla f_\omega(\v{x}_k)\rangle  ]
% \geq \E[ F(\v{x}_{k+1}) - F(\v{x}) - \langle \v{x}_{k+1} - \v{x}_k, \nabla F(\v{x}_{k+1}) - \nabla f_\omega(\v{x}_k) \rangle ].
% \label{eq:xUpdate.1.1}
% \end{align}
Next, we bound the inner product with the second term on the RHS of \eqref{eq:xUpdate.2}. To that end, utilize the convexity of member functions in $\v{g}_\omega$ and $\v{G}$ along the above lines to infer
\begin{align}
\begin{aligned}
&\sum_{i = 1}^m \E[ \langle \v{x}_{k+1} - \v{x}, {z}_k^i \nabla g_\omega^i(\v{x}_k)\rangle | \Wcal_k ] \\
& \; \geq \sum_{i = 1}^m \E[ z_k^i G^i(\v{x}_{k+1}) - z_k^i G^i(\v{x}) | \Wcal_k ] \\
&\qquad - \E[\langle \v{x}_{k+1} - \v{x}_k, z_k^i \nabla G^i(\v{x}_{k+1}) - z_k^i \nabla g_\omega^i(\v{x}_k) \rangle | \Wcal_k] \\
& \; = \E[ \v{z}_k^{\T} \v{G}(\v{x}_{k+1}) - \v{z}_k^{\T} \v{G}(\v{x}) | \Wcal_k]  \\
&\qquad - \sum_{i = 1}^m \E[\langle \v{x}_{k+1} - \v{x}_k, z_k^i \nabla G^i(\v{x}_{k+1}) -  \v{z}_k^i \nabla g_\omega^i(\v{x}_k) \rangle | \Wcal_k].
\end{aligned}
\label{eq:xUpdate.1.2}
\end{align}
To tackle the inner product with the third term in the RHS of \eqref{eq:xUpdate.1}, we use the identity
\begin{align}
\begin{aligned}
&\left\langle \v{x}_{k+1} - \v{x}, \frac{1}{\gamma_k} (\v{x}_{k+1} - \v{x}_k) \right\rangle \\
&\quad
	= \frac{1}{2\gamma_k} [\| \v{x}_{k+1} - \v{x} \|^2 - \| \v{x}_k - \v{x} \|^2 + \| \v{x}_{k+1} - \v{x}_k \|^2 ].
\end{aligned}
\label{eq:xUpdate.1.3}
\end{align}
The inequalities in \eqref{eq:xUpdate.1.1}, \eqref{eq:xUpdate.1.2}, and the equality in \eqref{eq:xUpdate.1.3} together gives
\begin{align}
\begin{aligned}
	&\E[ F(\v{x}_{k+1}) - F(\v{x}) + \v{z}_k^{\T} \v{G}(\v{x}_{k+1}) - \v{z}_k^{\T} \v{G}(\v{x}) | \Wcal_k]   
	\\
	&\quad 
	- \E[\langle \v{x}_{k+1} - \v{x}_k, \nabla F(\v{x}_{k+1}) - \nabla f_\omega(\v{x}_k) \rangle | \Wcal_k]
	\\
	& \quad 
	- \sum_{i = 1}^m \E[\langle \v{x}_{k+1} - \v{x}_k, {z}_k^i \nabla G^i(\v{x}_{k+1}) - {z}_k^i \nabla g_\omega^i(\v{x}_k) \rangle | \Wcal_k]
	\\
	&\quad
	+ \frac{1}{2\gamma_k} \E[ \| \v{x}_{k+1} - \v{x} \|^2 + \| \v{x}_{k+1} - \v{x}_k \|^2 | \Wcal_k] \\
	&\qquad \leq \frac{1}{2\gamma_k} \| \v{x}_k - \v{x} \|^2.
\end{aligned}
	\label{eq:xUpdate.3}
\end{align}
To simplify the above relation, apply Young's inequality to obtain
\begin{align}
\begin{aligned}
	& \E [\langle \v{x}_{k+1} - \v{x}_k, \nabla F(\v{x}_{k+1}) - \nabla f_\omega(\v{x}_k)  \rangle | \Wcal_k ]
	\\
	&  
	\leq \frac{1}{4\gamma_k} \E [ \| \v{x}_{k+1} - \v{x}_k \|^2 | \Wcal_k]
	+ \gamma_k \E [\| \nabla F(\v{x}_{k+1}) - \nabla f_\omega(\v{x}_k)  \|^2 | \Wcal_k ]
	\\
	&  
	\leq \frac{1}{4\gamma_k} \E[\| \v{x}_{k+1} - \v{x}_k \|^2 | \Wcal_k]
	+ 2\gamma_k \E[ \| \nabla F(\v{x}_{k+1}) 
	\\&\qquad - \E \nabla f_\omega(\v{x}_k) \|^2 + \| \E \nabla f_\omega(\v{x}_k) - \nabla f_\omega(\v{x}_k)  \|^2 | \Wcal_k ].
\end{aligned}
\end{align}
Recall that $\E[ \nabla f_\omega(\v{x}_k)  | \Wcal_k ] \in \partial F(\v{x}_k)$, subgradients of $F$ are bounded and $\nabla f_\omega$ has bounded variance. Therefore, we infer from the above inequality that
\begin{align}
\begin{aligned}
&\E[\langle \v{x}_{k+1} - \v{x}_k, \nabla F(\v{x}_{k+1}) - \nabla f_\omega(\v{x}_k)  \rangle | \Wcal_k ]
\\&\quad
\leq 
\frac{1}{4\gamma_k} \E [ \| \v{x}_{k+1} - \v{x}_k \|^2 | \Wcal_k ]
+ 2\gamma_k (4 C_F^2 + \sigma_F^2 ).
\end{aligned}
\label{eq:Young.1}
\end{align}
Appealing to Young's inequality $m$ times and a similar line of argument as above gives
\whencolumns{
\begin{align}
\begin{aligned}
	& \sum_{i = 1}^m \E[\langle \v{x}_{k+1} - \v{x}_k, {z}_k^i \nabla G^i(\v{x}_{k+1}) - {z}_k^i \nabla g_\omega^i(\v{x}_k) \rangle | \Wcal_k ] 
	\\
	&\quad \leq \frac{1}{4\gamma_k} \E[ \| \v{x}_{k+1} - \v{x}_k \|^2 | \Wcal_k ]
	+ 2\gamma_k m \sum_{i = 1}^m (4 [C_G^i]^2 + [\sigma_G^i]^2) \| \v{z}_k \|^2.
\end{aligned}
	\label{eq:Young.2}
\end{align}
}{
\begin{align}
\begin{aligned}
	& \sum_{i = 1}^m \E[\langle \v{x}_{k+1} - \v{x}_k, {z}_k^i \nabla G^i(\v{x}_{k+1}) - {z}_k^i \nabla g_\omega^i(\v{x}_k) \rangle | \Wcal_k ]  \\
	&\quad \leq \frac{1}{4\gamma_k} \E[ \| \v{x}_{k+1} - \v{x}_k \|^2 | \Wcal_k ] \\
	&\qquad
	+ 2\gamma_k m \sum_{i = 1}^m (4 [C_G^i]^2 + [\sigma_G^i]^2) \| \v{z}_k \|^2.
\end{aligned}
	\label{eq:Young.2}
\end{align}
}
Leveraging the relations in \eqref{eq:Young.1} and \eqref{eq:Young.2} in \eqref{eq:xUpdate.3}, we get
\begin{align}
\begin{aligned}
	&\E[ F(\v{x}_{k+1}) - F(\v{x}) + \v{z}_k^{\T} \v{G}(\v{x}_{k+1}) - \v{z}_k^{\T} \v{G}(\v{x}) | \Wcal_k] 
	\\
	&\;
	+ \frac{1}{2\gamma_k} \E[\| \v{x}_{k+1} - \v{x} \|^2 + \| \v{x}_{k+1} - \v{x}_k \|^2 | \Wcal_k ]
	 \\
	&\quad  \leq \frac{1}{2\gamma_k} \| \v{x}_k - \v{x} \|^2  + \frac{1}{4\gamma_k} \E [ \| \v{x}_{k+1} - \v{x}_k \|^2  | \Wcal_k ] 
	+ 2\gamma_k (4 C_F^2 + \sigma_F^2 )
	\\
	&\qquad
	+ \frac{1}{4\gamma_k} \E [ \| \v{x}_{k+1} - \v{x}_k \|^2 | \Wcal_k]
	% \\
	% &\qquad
	+ 2\gamma_k m \sum_{i = 1}^m (4 [C_G^i]^2 + [\sigma_G^i]^2) \|\v{z}_k\|^2,
\end{aligned}
\end{align}
that upon simplification gives \eqref{eq:xTarget}.

%---------------

\paragraph{Proof of \eqref{eq:zTarget}\whencolumns{:}{}}
From the $\v{z}$-update in \eqref{eq:zUpdate}, we obtain
\begin{align}
	\left\langle \v{z}_{k+1} - \v{z}, -\v{g}_\omega (\v{x}_{k+1}) + \frac{1}{\gamma_k} (\v{z}_{k+1} - \v{z}_k) \right\rangle \le 0
\label{eq:zUpdate.ineq}
\end{align}
for all $\v{z} \geq 0$. Again, we deal with the two summands in the second factor of the inner product  of \eqref{eq:zUpdate.ineq} separately. 
The expectation of the inner product with the first term yields
\begin{align}
\begin{aligned}
&\E[ \langle \v{z}_{k+1} - \v{z}, -\v{g}_\omega (\v{x}_{k+1}) \rangle | \Wcal_{k + {1}/{2}}] 
 \\
& = 
\E[ \langle \v{z}_{k+1} - \v{z}_k, -\v{g}_\omega (\v{x}_{k+1}) \rangle | \Wcal_{k + {1}/{2}} ] 
 + 
\E[ \langle \v{z}_k - \v{z}, -\v{g}_\omega (\v{x}_{k+1}) \rangle | \Wcal_{k + {1}/{2}} ]
 \\
& \geq
- \frac{1}{2\gamma_k}  \E[\| \v{z}_{k+1} - \v{z}_k \|^2 | \Wcal_{k + {1}/{2}}] - \frac{\gamma_k}{2}\E[\|\v{g}_\omega(\v{x}_{k+1})\|^2 | \Wcal_{k + {1}/{2}} ]
\\ &\qquad
+ 
\E[ \langle \v{z}_k - \v{z}, -\v{g}_\omega (\v{x}_{k+1}) \rangle | \Wcal_{k + {1}/{2}} ]
 \\
& \geq
- \frac{1}{2\gamma_k}  \E [ \| \v{z}_{k+1} - \v{z}_k \|^2  | \Wcal_{k + {1}/{2}} ]
- \frac{\gamma_k}{2} \| \v{D}_G \|^2
+ \langle \v{z}_k - \v{z}, -\v{G} (\v{x}_{k+1}) \rangle \footnotemark
 \\
& =
- \frac{1}{2\gamma_k}  \E [ \| \v{z}_{k+1} - \v{z}_k \|^2 | \Wcal_{k + {1}/{2}} ]
- \frac{\gamma_k}{2} \| \v{D}_G \|^2
+ 
 \v{z}^{\T} \v{G} (\v{x}_{k+1}) - \v{z}_k^{\T} \v{G} (\v{x}_{k+1}).
\end{aligned}
\label{eq:zUpdate.2}
\end{align}
\footnotetext{$\E[\v{g}_\omega (\v{x}_{k+1}) | \Wcal_{k + {1}/{2}}] = \v{G}(\v{x}_{k+1})$ requires that we sample $\omega$ once more for the $z$-update.}
In the above derivation, we have utilized Young's inequality and the boundedness of the second moment of $\v{g}_\omega$. Since $\Wcal_k \subset \Wcal_{k + {1}/{2}}$, the law of total probability can be used to condition \eqref{eq:zUpdate.2} on $\Wcal_k$ rather than on $\Wcal_{k + {1}/{2}}$. To simplify the inner product with the second term in \eqref{eq:zUpdate.ineq}, we use the identity
\whencolumns{
\begin{align}
\begin{aligned}
	\left\langle \v{z}_{k+1} - \v{z}, \frac{1}{\gamma_k } (\v{z}_{k+1} - \v{z}_k) \right\rangle
	= \frac{1}{2\gamma_k} [ \| \v{z}_{k+1} - \v{z} \|^2 -  \| \v{z}_k - \v{z} \|^2 +  \| \v{z}_{k+1} - \v{z}_k \|^2].
\end{aligned}
\label{eq:zSquare}
\end{align}
}{
\begin{align}
\begin{aligned}
	&\left\langle \v{z}_{k+1} - \v{z}, \frac{1}{\gamma_k } (\v{z}_{k+1} - \v{z}_k) \right\rangle \\
	&\quad = \frac{1}{2\gamma_k} [ \| \v{z}_{k+1} - \v{z} \|^2 -  \| \v{z}_k - \v{z} \|^2 +  \| \v{z}_{k+1} - \v{z}_k \|^2].
\end{aligned}
\label{eq:zSquare}
\end{align}
}
Utilizing \eqref{eq:zUpdate.2} and \eqref{eq:zSquare} in \eqref{eq:zUpdate.ineq} gives \eqref{eq:zTarget}. Adding \eqref{eq:xTarget} and \eqref{eq:zTarget} followed by a multiplication by $\gamma_k$ yields
\whencolumns{
\begin{align}
\begin{aligned}
	&\gamma_k \E[ \Lcal(\v{x}_{k+1}, \v{z}) - \Lcal(\v{x}, \v{z}_k) | \Wcal_k ] 
	+ \frac{1}{2} \E\left[  \|\v{x}_{k+1} - \v{x} \|^2 +  \| \v{z}_{k+1} - \v{z} \|^2 | \Wcal_k \right] \\
	& \quad \leq \frac{1}{2} \|\v{x}_k - \v{x} \|^2 + \frac{1}{2} \| \v{z}_k - \v{z} \|^2 + \frac{1}{4} P_2 \gamma_k^2 + \frac{1}{4} P_3 \gamma_k^2 \| \v{z}_k \|^2.
\end{aligned}
\label{eq:almostMartingale}
\end{align}
}{
\begin{align}
\begin{aligned}
	&\gamma_k \E[ \Lcal(\v{x}_{k+1}, \v{z}) - \Lcal(\v{x}, \v{z}_k) | \Wcal_k ] 
	\\
	&\;
	+ \frac{1}{2} \E\left[  \|\v{x}_{k+1} - \v{x} \|^2 +  \| \v{z}_{k+1} - \v{z} \|^2 | \Wcal_k \right] \\
	& \quad \leq \frac{1}{2} \|\v{x}_k - \v{x} \|^2 + \frac{1}{2} \| \v{z}_k - \v{z} \|^2 + \frac{1}{4} P_2 \gamma_k^2 + \frac{1}{4} P_3 \gamma_k^2 \| \v{z}_k \|^2.
\end{aligned}
\label{eq:almostMartingale}
\end{align}
}
Taking the expectation and applying the law of total probability completes the proof of \eqref{eq:1step}.

\noindent \emph{$\bullet$ Step (b) -- Proof of \eqref{eq:zBound}\whencolumns{:}{}}
Plugging $(\v{x}, \v{z}) = (\v{x}_\star, \v{z}_\star)$ in the inequality for the one-step update in \eqref{eq:1step} and summing it over $k=1, \ldots, \kappa$ for $\kappa \leq K$ gives
\begin{align}
\begin{aligned}
	&\sum_{k=1}^\kappa \gamma_k \underbrace{\E[\Lcal(\v{x}_{k+1}, \v{z}_\star)  - \Lcal(\v{x}_\star, \v{z}_k)] }_{\geq 0 \text{ from } \eqref{eq:saddle}} 
	+ \frac{1}{2} \sum_{k=1}^\kappa \left[ \E\| \v{x}_{k+1} - \v{x}_\star \|^2 + \E\| \v{z}_{k+1} - \v{z}_\star \|^2 \right]
	 \\
	&\quad \le 
	\frac{1}{2} \sum_{k=1}^\kappa \left[ \E\| \v{x}_k - \v{x}_\star \|^2 + \E \| \v{z}_k - \v{z}_\star \|^2 \right]
	+ \frac{1}{4} P_2 \sum_{k=1}^\kappa  \gamma_k^2 
	+ \frac{1}{4} P_3  \sum_{k=1}^\kappa \gamma_k^2 \E\|\v{z}_k\|^2
\end{aligned}
\label{eq:1step.z*.1}
\end{align}
for $\kappa = 1,\ldots,K$. The above then yields
\whencolumns{
\begin{align}
\begin{aligned}
	&\underbrace{ \E\| \v{x}_{\kappa+1} - \v{x}_\star \|^2}_{\geq 0} 
	+  {\E\| \v{z}_{\kappa+1} - \v{z}_\star \|^2} 
	\\
	&\quad \le 
	 \| \v{x}_1 - \v{x}_\star \|^2 
	+  \| \v{z}_1 - \v{z}_\star \|^2 
	+ \frac{1}{2} P_2 \sum_{k=1}^\kappa  \gamma_k^2 
	+ \frac{1}{2} P_3  \sum_{k=1}^\kappa \gamma_k^2 \E\|\v{z}_k\|^2.
\end{aligned}
\label{eq:1step.z*.2}
\end{align}
}{
\begin{align}
\begin{aligned}
	&\underbrace{ \E\| \v{x}_{\kappa+1} - \v{x}_\star \|^2}_{\geq 0} 
	+  {\E\| \v{z}_{\kappa+1} - \v{z}_\star \|^2} \\
	&\quad \le 
	 \| \v{x}_1 - \v{x}_\star \|^2 
	+  \| \v{z}_1 - \v{z}_\star \|^2 
	\\
	&\qquad
	+ \frac{1}{2} P_2 \sum_{k=1}^\kappa  \gamma_k^2 
	+ \frac{1}{2} P_3  \sum_{k=1}^\kappa \gamma_k^2 \E\|\v{z}_k\|^2.
\end{aligned}
\label{eq:1step.z*.2}
\end{align}
}
Notice that $ 2 \E\|\v{z}_{\kappa+1} - \v{z}_\star\|^2 +  2\|\v{z}_\star\|^2 \geq \E\| \v{z}_{\kappa+1}\|^2$. This inequality and $\v{z}_1 = 0$ in \eqref{eq:1step.z*.2} gives
\whencolumns{
\begin{align}
\begin{aligned}
	\E\|\v{z}_{\kappa+1}\|^2
	& \le 2 \|\v{x}_{1} - \v{x}_\star\|^2 +  4 \|\v{z}_\star\|^2 + P_2 \sum_{k=1}^\kappa \gamma_k^2 + P_3 \sum_{k=1}^\kappa \gamma_k^2 \E\|\v{z}_k\|^2 \\
	& \le P_1 + P_2 \sum_{k=1}^\kappa \gamma_k^2 + P_3 \sum_{k=1}^\kappa \gamma_k^2 \E\|\v{z}_k\|^2.
\end{aligned}
\label{eq:1step.z*.3}
\end{align}
}{
\begin{align}
\begin{aligned}
	\E\|\v{z}_{\kappa+1}\|^2
	& \le 2 \|\v{x}_{1} - \v{x}_\star\|^2 +  4 \|\v{z}_\star\|^2
	\\
	&\quad + P_2 \sum_{k=1}^\kappa \gamma_k^2 + P_3 \sum_{k=1}^\kappa \gamma_k^2 \E\|\v{z}_k\|^2 \notag \\
	& \le P_1 + P_2 \sum_{k=1}^\kappa \gamma_k^2 + P_3 \sum_{k=1}^\kappa \gamma_k^2 \E\|\v{z}_k\|^2.
\end{aligned}
\label{eq:1step.z*.3}
\end{align}
}
We argue the bound on $\E\|\v{z}_k\|^2$ for $k=1,\ldots,K$ inductively. Since $\v{z}_1 = 0$, the base case trivially holds. Assume that the bound holds for $k = 1,\ldots,\kappa$ for $\kappa < K$. With the notation $A_K = \sum_{k=1}^K \gamma_k^2$, the relation in \eqref{eq:1step.z*.3} implies 
\begin{align}
\E\|\v{z}_{\kappa+1}\|^2 
& \leq P_1 + P_2 \sum_{k=1}^{\kappa} \gamma_k^2 + P_3 \sum_{k=1}^{\kappa} \gamma_k^2  \frac{P_1 + P_2 A_K}{1 - P_3 A_K} 
\notag \\
& \leq P_1 + P_2  A_K + P_3  \frac{P_1 + P_2 A_K}{1 - P_3 A_K} A_K
\notag \\
& =
\frac{P_1 + P_2 A_K}{1 - P_3  A_K},
\end{align}
completing the proof of step (b).

%---------

\noindent \emph{$\bullet$ Step (c) -- Combining steps (a) and (b) to prove Theorem \ref{thm:exp}\whencolumns{:}{}}
For any $\v{z} \geq 0$, the inequality in \eqref{eq:1step} with $\v{x} = \v{x}_\star$ from step (a) summed over $k=1,\ldots, K$ gives 
\whencolumns{
	\begin{align}
	\begin{aligned}
		&\sum_{k=1}^K \gamma_k \E[\Lcal(\v{x}_{k+1}, \v{z})  - \Lcal(\v{x}_\star, \v{z}_k)] 
		+ \frac{1}{2} \sum_{k=1}^K  \left[ \E\| \v{x}_{k+1} - \v{x}_\star \|^2 + \E\| \v{z}_{k+1} - \v{z} \|^2 \right]
		 \\
		& \le 
		\frac{1}{2} \sum_{k=1}^K \left[ \E\| \v{x}_k - \v{x}_\star \|^2 
		+ \E \| \v{z}_k - \v{z} \|^2 \right]
		+ \frac{1}{4} P_2\sum_{k=1}^K  \gamma_k^2 
		+ \frac{1}{4} P_3 \sum_{k=1}^K  \gamma_k^2 \E\|\v{z}_k\|^2.
	\end{aligned}
	\label{eq:1step.z}
	\end{align}
}{
	\begin{align}
	\begin{aligned}
		&\sum_{k=1}^K \gamma_k \E[\Lcal(\v{x}_{k+1}, \v{z})  - \Lcal(\v{x}_\star, \v{z}_k)] 
		\\
		&\;
		+ \frac{1}{2} \sum_{k=1}^K  \E\| \v{x}_{k+1} - \v{x}_\star \|^2 
		+ \frac{1}{2} \sum_{k=1}^K  \E\| \v{z}_{k+1} - \v{z} \|^2 
		 \\
		&\quad \le 
		\frac{1}{2} \sum_{k=1}^K  \E\| \v{x}_k - \v{x}_\star \|^2 
		+ \frac{1}{2} \sum_{k=1}^K  \E \| \v{z}_k - \v{z} \|^2 
		\\
		&\qquad
		+ \frac{1}{4} P_2\sum_{k=1}^K  \gamma_k^2 
		+ \frac{1}{4} P_3 \sum_{k=1}^K  \gamma_k^2 \E\|\v{z}_k\|^2.
	\end{aligned}
	\label{eq:1step.z}
	\end{align}
}
Using $\v{z}_1 = 0$ and an appeal to \rev{the saddle point property of $(\v{x}_\star, \v{z}_\star)$} yields
\whencolumns{
\begin{alignat}{1}
	&\sum_{k=1}^K \gamma_k \E[\Lcal(\v{x}_{k+1}, \v{z})  - \Lcal(\v{x}_\star, \v{z}_\star)] 
	+ \frac{1}{2} \E\| \v{x}_{K+1} - \v{x}_\star \|^2 
	+ \frac{1}{2} \E\| \v{z}_{K+1} - \v{z} \|^2 
	 \notag \\
	&\quad \le 
	\frac{1}{2} \E\| \v{x}_1 - \v{x}_\star \|^2 
	+ \frac{1}{4} P_2\sum_{k=1}^K  \gamma_k^2 
	+ \frac{1}{4} P_3 \sum_{k=1}^K  \gamma_k^2 \E\|\v{z}_k\|^2
	+ \frac{1}{2} \|\v{z}\|^2
	\notag \\
	&\quad \le
	\frac{1}{4} P_1 - \| \bone + \v{z}_\star\|^2
	+ \frac{1}{4} P_2 A_K		
	+ \frac{1}{4} P_3 \sum_{k=1}^K  \gamma_k^2 \frac{P_1 + P_2A_K}{1 - P_3 A_K}
	+ \frac{1}{2} \|\v{z}\|^2
	\notag \\
	&\quad =
	\frac{1}{4} P_1
	+ \frac{1}{4} P_2 A_K		
	+ \frac{1}{4} P_3 A_K \frac{P_1 + P_2A_K}{1 - P_3 A_K}
	+ \frac{1}{2} \|\v{z}\|^2  - \|\bone + \v{z}_\star\|^2
	\notag \\
	&\quad =
	\frac{1}{4} \left( \frac{P_1 + P_2 A_K}{1 - P_3 A_K} \right)
	+ \frac{1}{2} \|\v{z}\|^2  - \|\bone +\v{z}_\star\|^2.
\label{eq:1step.z.2}
\end{alignat}
}{
\begin{alignat}{1}
	&\sum_{k=1}^K \gamma_k \E[\Lcal(\v{x}_{k+1}, \v{z})  - \Lcal(\v{x}_\star, \v{z}_\star)] 
	\notag\\
	&\;
	+ \frac{1}{2} \E\| \v{x}_{K+1} - \v{x}_\star \|^2 
	+ \frac{1}{2} \E\| \v{z}_{K+1} - \v{z} \|^2 
	 \notag \\
	&\quad \le 
	\frac{1}{2} \E\| \v{x}_1 - \v{x}_\star \|^2 
	+ \frac{1}{2} \|\v{z}\|^2
	\notag \\
	&\qquad 
	+ \frac{1}{4} P_2\sum_{k=1}^K  \gamma_k^2 
	+ \frac{1}{4} P_3 \sum_{k=1}^K  \gamma_k^2 \E\|\v{z}_k\|^2
	\notag \\
	&\quad \le
	\frac{1}{4} P_1 - \| \bone + \v{z}_\star\|^2 
	+ \frac{1}{2} \|\v{z}\|^2
	\notag \\
	&\qquad
	+ \frac{1}{4} P_2 A_K		
	+ \frac{1}{4} P_3 \sum_{k=1}^K  \gamma_k^2 \frac{P_1 + P_2A_K}{1 - P_3 A_K}
	\notag \\
	&\quad =
	\frac{1}{4} P_1
	+ \frac{1}{4} P_2 A_K		
	+ \frac{1}{4} P_3 A_K \frac{P_1 + P_2A_K}{1 - P_3 A_K}
	\notag \\
	&\qquad
	+ \frac{1}{2} \|\v{z}\|^2  - \|\bone + \v{z}_\star\|^2
	\notag \\
	&\quad =
	\frac{1}{4} \left( \frac{P_1 + P_2 A_K}{1 - P_3 A_K} \right)
	+ \frac{1}{2} \|\v{z}\|^2  - \|\bone +\v{z}_\star\|^2.
\label{eq:1step.z.2}
\end{alignat}
}
In deriving the above inequality, we have utilized the bound on $\E\|\v{z}_k\|^2$ from step (b) and the definition of $P_1$ and $A_K$. 
To further simplify the above inequality, notice that \rev{the saddle point property of $(\v{x}_\star, \v{z}_\star)$ in \eqref{eq:saddle} yields
\begin{align}
    F(\v{x}_\star) = \Lcal(\v{x}_\star, 0) \leq \Lcal(\v{x}_\star, \v{z}_\star) = F(\v{x}_\star) + \v{z}_\star^\T \v{G}(\v{x}_\star),
\end{align} 
which implies $\v{z}_\star^\T \v{G}(\v{x}_\star) \geq 0$. However, the saddle point theorem guarantees that $\v{x}_\star$ is an optimizer of $\PE$, meaning that $\v{x}_\star$ is feasible and $\v{G}(\v{x}_\star) \leq 0$, implying $\v{z}_\star^\T \v{G}(\v{x}_\star) \leq 0$ as $\v{z}_\star \in \Rset^m_+$. Taken together, we infer
\begin{align}
    \v{z}_\star^\T \v{G}(\v{x}_\star) = 0 \implies \Lcal(\v{x}_\star, \v{z}_\star) = F(\v{x}_\star) = \pEstar.
    \label{eq:comp.slack}
\end{align}
}
Since $\Lcal(\v{x}, \v{z})$ is convex in $\v{x}$, Jensen's inequality \rev{and \eqref{eq:comp.slack} implies}
\whencolumns{
\begin{align}
\sum_{k=1}^K \gamma_k \E[\Lcal(\v{x}_{k+1}, \v{z})  - \Lcal(\v{x}_\star, \v{z}_\star)] 
\geq
\left( {\sum_{k=1}^K \gamma_k} \right) \E[\Lcal(\bar{\v{x}}_{K+1}, \v{z}) - \rev{\pEstar}],
\label{eq:JensenL}
\end{align}
}{
\begin{align}
\begin{aligned}
&\sum_{k=1}^K \gamma_k \E[\Lcal(\v{x}_{k+1}, \v{z})  - \Lcal(\v{x}_\star, \v{z}_\star)] 
\\
&\quad
\geq
\left( {\sum_{k=1}^K \gamma_k} \right) \E[\Lcal(\bar{\v{x}}_{K+1}, \v{z}) - \rev{\pEstar}],
\end{aligned}
\label{eq:JensenL}
\end{align}
}
where recall that $\bar{\v{x}}_{K+1}$ is the $\gamma$-weighted average of the iterates. Utilizing \eqref{eq:JensenL} in \eqref{eq:1step.z.2}, we get
\whencolumns{
\begin{align}
\begin{aligned}
	\left(\sum_{k=1}^K \gamma_k \right) \E[ \Lcal(\bar{\v{x}}_{K+1}, \v{z}) - \rev{\pEstar}]
	& \leq
	\frac{1}{4} \left( \frac{P_1 + P_2 A_K}{1 - P_3 A_K} \right)
	+ \frac{1}{2} \|\v{z}\|^2  - \|\bone + \v{z}_\star\|^2.
\end{aligned}
\label{eq:1step.z.3}
\end{align}
}{
\begin{align}
\begin{aligned}
	&\left(\sum_{k=1}^K \gamma_k \right) \E[ \Lcal(\bar{\v{x}}_{K+1}, \v{z}) - \rev{\pEstar}]
	\\
	&\quad \leq
	\frac{1}{4} \left( \frac{P_1 + P_2 A_K}{1 - P_3 A_K} \right)
	+ \frac{1}{2} \|\v{z}\|^2  - \|\bone + \v{z}_\star\|^2.
\end{aligned}
\label{eq:1step.z.3}
\end{align}
}
The above relation defines a bound on $\E[\Lcal(\bar{\v{x}}_{K+1}, \v{z})]$ for every $\v{z} \geq 0$.
Choosing $\v{z} = 0$ and noting $\| \bone + \v{z}_\star \|^2 \ge 0$, we get the bound on expected suboptimality in \eqref{eq:FBound}. To derive the bound on expected constraint violation in \eqref{eq:GBound}, notice that \rev{the saddle point property in \eqref{eq:saddle}  and \eqref{eq:comp.slack} imply}
\begin{align}
\begin{aligned}
&\E[\Lcal(\bar{\v{x}}_{K+1}, \bone^i+\v{z}_\star) - \rev{\pEstar}] 
\\&\quad = \E[\Lcal(\bar{\v{x}}_{K+1}, \v{z}_\star) - \Lcal(\v{x}_\star, \v{z}_\star)] 
+ \E\left[[\bone^i]^{\T} \v{G}(\bar{\v{x}}_{K+1})\right]
\\&\quad \geq \E[G^i(\bar{\v{x}}_{K+1})],
\end{aligned}
\label{eq:Gi.bound}
\end{align}
where $\bone^i \in \Rset^m$ is a vector of all zeros except the $i$-th entry that is unity. Choosing $\v{z} = \bone^i + \v{z}_\star$ in \eqref{eq:1step.z.3} and the observation in \eqref{eq:Gi.bound} then gives
\begin{align}
\E[G^i(\bar{\v{x}}_{K+1})]
&\; \leq 
	\frac{1}{4 \sum_{k=1}^K \gamma_k }   
		\left( 
			\frac{P_1 + P_2 A_K}{1 - P_3 A_K} + 2 \|\bone^i + \v{z}_\star\|^2 - 4 \| \bone + \v{z}_\star\|^2
		 \right) \notag \\
&\; \leq 
	\frac{1}{4 \sum_{k=1}^K \gamma_k }  
		\left( \frac{P_1 + P_2 A_K}{1 - P_3 A_K} \right)
\end{align}
for each $i=1,\ldots,m$.
This completes the proof of \eqref{eq:GBound}. The bounds in \eqref{eq:bound.sqrtK} are immediate from that in \eqref{eq:FBound}--\eqref{eq:GBound}. This completes the proof of Theorem \ref{thm:exp}.

%-------------------

\rev{\begin{remark}
The bound in \eqref{eq:GBound} can be sharpened to 
\begin{align}
    \sum_{i=1}^m  \E\left[ {G}^i(\bar{\v{x}}_{K+1})\right]^+ \leq 	\frac{1}{4 \sum_{k=1}^K \gamma_k }  
	\left( 
		\frac{P_1 + P_2 \sum_{k=1}^K \gamma_k^2}{1 - P_3 \sum_{k=1}^K \gamma_k^2}
	\right)
	\label{eq:rev.violation}
\end{align}
using $\v{z}$ defined by $
    \v{z}^i \coloneqq \v{z}^i_\star + \indc_{\{{G}^i(\bar{\v{x}}_{K+1}) > 0 \}}
$ for $i = 1, \ldots, m$
in \eqref{eq:1step.z.3}. Here, $\indc_{\{A\}}$ is the indicator function, evaluating to $1$ if $A$ holds and $0$ otherwise. This improved bound was suggested to us by an anonymous reviewer. Notice that  \eqref{eq:rev.violation} suggests a much tighter bound on the expected constraint violation per constraint than \eqref{eq:GBound} when $m$ is large.
\end{remark}
}

In what follows, we offer insights into two specific aspects of our proof. 
% Our proof technique relies crucially on the dissipative inequality \eqref{eq:1step}.
First, we present our conjecture on where the Gauss-Seidel nature of our dual update obtained with an extra sample helps us  circumvent the need for an a priori bound on the dual variable. Notice that our dual update allows us to derive the third line of \eqref{eq:zUpdate.2} that ultimately yields the term $-\v{z}_k^{\T} \v{G}(\v{x}_{k+1})$ in \eqref{eq:zTarget}. This term conveniently disappears when \eqref{eq:zTarget} is added to the inequality in \eqref{eq:xTarget} obtained from the primal update. We conjecture that this cancellation made possible by our dual update makes the theoretical analysis particularly easy. We anticipate that the classical Jacobi-style dual iteration derived with one sample shared within the primal and the dual steps will not lead to said cancellation and yield a term of the form $\v{z}_k^{\T} \left[ \v{G}(\v{x}_{k+1}) - \v{G}(\v{x}_{k}) \right]$.
Bounding the growth of such a term might prove challenging without an available bound on $\|\v{z}_k\|$ and will likely require a different argument. A detailed comparison between the proof techniques of the Jacobi and the Gauss-Seidel updates is left for future endeavors.

Second, we comment on the presence of a dimensionless constant $\bone$ in $P_1$ together with $\v{z}_\star$. 
% We also emphasize that  many prior analysis such as those in \cite{nedic2009subgradient,juditsky2011solving}, we utilize the dissipative inequality in \eqref{eq:1step} differently. More precisely, 
We use the inequality in \eqref{eq:1step} to establish \eqref{eq:1step.z.3} that is valid at all $\v{z} \geq 0$. Inspired by arguments in \cite{xu2018primal}, we then utilize \eqref{eq:1step.z.3} not only at the dual iterate $\v{z}_k$--that is often the case with many prior analyses--but also at $\v{z}=0$ and $\v{z} = \bone^i + \v{z}_\star$. Specifically, the nature of the Lagrangian function $\Lcal(\v{x}, \v{z})$ in $\v{z}$ permits us to relate these evaluations at $\v{z}=0$ and $\v{z} = \bone^i + \v{z}_\star$ to the extents of suboptimality and constraint violation, respectively, using 
\begin{align}
	\mathcal{L}(\v{x}, 0) = F(\v{x}), \quad \mathcal{L}(\v{x}, \bone^i + \v{z}) = \mathcal{L}(\v{x}, \v{z}) + G^i(\v{x}).
\end{align}
The deliberate inclusion of $\| {\bone+\v{z}_\star} \|^2$ in the constant $P_1$ aids in drowning the effect of the term $\frac{1}{2}\|{\v{z}}\|^2$ in \eqref{eq:1step.z.3} evaluated at $\v{z} = \bone^i + \v{z}_\star$ when deriving the bound on the extent of constraint violation, without impacting the same when evaluated at $\v{z}=0$, used in deriving the bound on the extent of suboptimality.

% \begin{equation*}
% 	F(\v{x}) - F(\v{x}_\star) = \mathcal{L}(\v{x}, 0) - \mathcal{L}(\v{x}_\star, \v{z}_\star), \qquad G^i(\v{x}) = \mathcal{L}(\v{x}, \bone^i) - F(\v{x}).
% \end{equation*}
% Additionally}

\subsection{Optimal step size selection}
\label{sub:stepsize}
%When solving $\PE$ via Algorithm \ref{alg:PDSS}, one seeks to minimize the number of iterations required to reach a solution that is guaranteed to be within $\ve>0$ distance to optimality and constraint violation. 
We exploit the bounds in Theorem \ref{thm:exp} to select a step size that minimizes the iteration count to reach an $\ve$-approximately feasible and optimal solution to $\PE$ and solve\footnote{The integrality of $K$ is ignored for notational convenience.} 
\begin{align}
%\hspace{-0.15in}
\begin{aligned}
& \underset{K, \ \gamma > 0}{\text{minimize}} && \ K, \\
& \text{subject to } && \   
\frac{\eta}{\sqrt{K}} = \frac{P_1 + P_2 \gamma^2}{4 \gamma \sqrt{K}(1 - P_3 \gamma^2)} \le \ve, \ \ P_3 \gamma^2 < 1.
\end{aligned}
\label{eq:step.minK}
\end{align}
The following characterization of optimal step sizes and the resulting iteration count from Proposition \ref{prop:gamma.K.opt} will prove useful in studying the growth in iteration complexity in solving $\PCVaR$ with the risk-aversion parameters $\alpha, \v{\beta}$ in the following section.

\begin{proposition}
	For any $\ve > 0$, the optimal solution of \eqref{eq:step.minK} satisfies
\begin{align}
\begin{aligned}
	\gamma_\star^2 = \frac{2 P_3^{-1}}{2 + y + \sqrt{y^2 + 8y}}, 
%	\\
\; \;
	K_\star =  \frac{(P_1 + P_2 \gamma_\star^2)^2}{16  \gamma_\star^2 (1 - P_3 \gamma_\star^2)^2 \ve^2},
\end{aligned}
\label{eq:gamma.K.opt}
\end{align}
where $y = 1 + \frac{P_2}{P_1 P_3}$.
\label{prop:gamma.K.opt}
\end{proposition}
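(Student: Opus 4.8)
The plan is to decouple the two decision variables. On the feasible region $\gamma \in (0, P_3^{-1/2})$ the quantity
\begin{equation*}
\eta(\gamma) := \frac{P_1 + P_2\gamma^2}{4\gamma(1 - P_3\gamma^2)}
\end{equation*}
is strictly positive, so the constraint $\eta(\gamma)/\sqrt{K} \le \ve$ is equivalent to $K \ge \eta(\gamma)^2/\ve^2$, and for each fixed $\gamma$ the smallest admissible $K$ is attained with equality. Hence \eqref{eq:step.minK} reduces to the single-variable problem $\min_{\gamma \in (0,P_3^{-1/2})} \eta(\gamma)$, after which $K_\star = \eta(\gamma_\star)^2/\ve^2$ is exactly the expression claimed for $K_\star$ in \eqref{eq:gamma.K.opt}. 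It therefore suffices to locate the minimizer $\gamma_\star$ of $\eta$.

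To do so, I would substitute $t = \gamma^2$ and write $\eta = \frac{P_1 + P_2 t}{4\sqrt{t}(1 - P_3 t)}$. Since $\eta \to +\infty$ both as $t \downarrow 0$ and as $t \uparrow P_3^{-1}$, the minimizer is an interior stationary point. Applying the quotient rule to $\eta = N(t)/D(t)$ with $N(t) = P_1 + P_2 t$ and $D(t) = 4\sqrt{t}(1 - P_3 t)$, imposing $N'(t)D(t) = N(t)D'(t)$, and clearing the common factor $\sqrt{t}$ collapses the stationarity condition to the quadratic
\begin{equation*}
P_2 P_3\, t^2 + (P_2 + 3 P_1 P_3)\, t - P_1 = 0.
\end{equation*}
The product of its roots, $-P_1/(P_2 P_3)$, is negative, so exactly one root is positive; that root is the sought minimizer.

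The last step is to massage this root into the stated closed form. I would normalize by setting $s := P_3 t = P_3\gamma^2$ and introducing $y := 1 + \frac{P_2}{P_1 P_3}$, equivalently $P_2 = (y-1)P_1 P_3$ (note $y > 1$ since $P_1, P_2, P_3 > 0$). Dividing the quadratic through by $P_1 P_3$ turns it into $(y-1)s^2 + (y+2)s - 1 = 0$, whose positive root is $s = \frac{-(y+2) + \sqrt{y^2 + 8y}}{2(y-1)}$ after simplifying the discriminant $(y+2)^2 + 4(y-1) = y^2 + 8y$. Rationalizing the numerator and using $(y^2+8y) - (y+2)^2 = 4(y-1)$ gives $s = \frac{2}{2 + y + \sqrt{y^2 + 8y}}$, so that $\gamma_\star^2 = s/P_3$ matches the first identity in \eqref{eq:gamma.K.opt}. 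Substituting $\gamma_\star$ back into $K = \eta(\gamma)^2/\ve^2$ yields the stated $K_\star$.

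The only genuine obstacle is bookkeeping: deriving the quadratic cleanly from the quotient-rule condition (the $\sqrt{t}$ factors and the signs must be tracked carefully) and carrying out the rationalization that collapses the radical into the compact form. Establishing that the interior stationary point is the global minimizer needs no second-order test, since it follows at once from the blow-up of $\eta$ at both endpoints of $(0, P_3^{-1/2})$.
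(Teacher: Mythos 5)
Your proposal is correct and follows essentially the same route as the paper: reduce \eqref{eq:step.minK} to minimizing $\eta(\gamma)$ over $(0, P_3^{-1/2})$ by taking the constraint with equality, differentiate (the paper's quotient-rule computation yields exactly your quadratic $P_2 P_3 \gamma^4 + (P_2 + 3P_1 P_3)\gamma^2 - P_1 = 0$), and certify the unique positive stationary point as the global minimizer. The only difference is cosmetic: you justify global optimality via blow-up of $\eta$ at both endpoints and carry out the rationalization to the closed form explicitly, whereas the paper argues via the sign of the derivative at $0^+$ and simply asserts the root equals the stated $\gamma_\star$.
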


\begin{proof}
It is evident from \eqref{eq:gamma.K.opt} that $\gamma_\star^2 < P_3^{-1}$. Then, it suffices to show that $\gamma_\star$ from \eqref{eq:gamma.K.opt} minimizes 
\begin{align}
    \sqrt{K} =  \frac{P_1 + P_2 \gamma^2}{4  \gamma (1 - P_3 \gamma^2) \ve}
\end{align}
over $\gamma > 0$. To that end, notice that
\begin{alignat}{1}
	\frac{d}{d\gamma} \left( \frac{P_1 + P_2 \gamma^2}{\gamma (1 - P_3 \gamma^2)}\right) 
	&= \frac{P_2 P_3 \gamma^4 + (P_2 + 3 P_1 P_3) \gamma^2 - P_1}{\gamma^2 (1 - P_3 \gamma^2)^2}.\end{alignat}
The above derivative is negative at $\gamma = 0^+$ and vanishes only at $\gamma_\star$ over positive values of $\gamma$, certifying it as the global minimizer.
\end{proof}

Parameter $P_1$ is generally not known a priori. However, it is often possible to  bound it from above. One can calculate $\gamma_\star$ and $K_\star$ using \eqref{eq:gamma.K.opt}, replacing $P_1$ with its overestimate. Notice that
\begin{align}
\frac{d K_\star}{d P_1} := \frac{\partial K_\star}{\partial P_1} + \frac{\partial K_\star}{\partial \gamma_\star} \frac{d \gamma_\star}{d y} \frac{d y}{d P_1}.    
\end{align}
It is straightforward to verify that  $\frac{\partial K_\star}{\partial P_1} > 0$, 
$\frac{d y}{d P_1} \le 0$, and $\frac{\partial \gamma_\star}{\partial y} \le 0$, and hence, overestimating $P_1$ results in a smaller $\gamma_\star$. Finally, $\frac{\partial K_\star}{\partial \gamma} > 0$ for $\gamma > \gamma_\star$, implying that $K_\star$ calculated with an overestimate of $P_1$ is larger than the optimal iteration count--the computational burden we must bear for not knowing $P_1$. Our algorithm does require knowledge of $P_3$ to implement the algorithm, that in turn depend only on the nature of the functions defining the constraints and not a primal-dual optimizer.

%---------------

\subsection{Asymptotic almost sure convergence with decaying step-sizes}
\label{sub:almost_sure_convergence}
Subgradient methods are often studied with decaying non-summable square-summable step sizes, for which they converge to an optimizer in the unconstrained setting. The result holds even for distributed variants and for mirror descent methods (see \cite{doan2018convergence}). Establishing convergence of Algorithm \ref{alg:PDSS} to a primal-dual optimizer of $\PE$ is much more challenging without assumptions of strong convexity in the objective. With such step-sizes, we provide the following result to guarantee the stability of our algorithm, which is reminiscent of \cite[Theorem 4]{nedic2014stochastic}.
\begin{proposition}
	Suppose Assumption \ref{assumptions} holds and $\{\gamma_k\}_{k=1}^\infty$ is a non-summable square-summable nonnegative sequence, i.e., $\sum_{k=1}^\infty \gamma_k = \infty, \sum_{k=1}^\infty \gamma_k^2 < \infty$. Then, $(\v{x}_k, \v{z}_k)$ generated by Algorithm \ref{alg:PDSS} remains bounded and
	$\lim_{k\to\infty} \Lcal(\v{x}_k, \v{z}_\star) - \Lcal(\v{x}_\star, \v{z}_k) = 0$ 
%	\quad \text{almost surely}.$$
almost surely.
\label{prop:iterates}
\end{proposition}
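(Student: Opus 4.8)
The plan is to read \eqref{eq:almostMartingale} as an almost-supermartingale relation and invoke the Robbins--Siegmund convergence theorem. Fix the saddle point $(\v{x}_\star, \v{z}_\star)$ guaranteed by Assumption \ref{assumptions}(d), and set $V_k := \tfrac12\|\v{x}_k - \v{x}_\star\|^2 + \tfrac12\|\v{z}_k - \v{z}_\star\|^2$ together with the nonnegative gap $h_k := \Lcal(\v{x}_{k+1}, \v{z}_\star) - \Lcal(\v{x}_\star, \v{z}_k) \ge 0$, where nonnegativity is exactly the saddle-point property \eqref{eq:saddle}. Evaluating \eqref{eq:almostMartingale} at $(\v{x}, \v{z}) = (\v{x}_\star, \v{z}_\star)$ and bounding the only troublesome term via $\|\v{z}_k\|^2 \le 4 V_k + 2\|\v{z}_\star\|^2$ turns the one-step inequality into $\E[V_{k+1} \mid \Wcal_k] \le (1 + P_3 \gamma_k^2) V_k + (\tfrac14 P_2 + \tfrac12 P_3 \|\v{z}_\star\|^2)\gamma_k^2 - \gamma_k\, \E[h_k \mid \Wcal_k]$. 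This is the Robbins--Siegmund template $\E[V_{k+1}\mid\Wcal_k] \le (1+\beta_k)V_k + \xi_k - \zeta_k$ with $\beta_k = P_3\gamma_k^2$, $\xi_k = (\tfrac14 P_2 + \tfrac12 P_3\|\v{z}_\star\|^2)\gamma_k^2$ and $\zeta_k = \gamma_k\E[h_k\mid\Wcal_k] \ge 0$, and the hypothesis $\sum_k \gamma_k^2 < \infty$ makes both $\sum_k\beta_k$ and $\sum_k\xi_k$ finite.

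The Robbins--Siegmund theorem then yields, almost surely, that $V_k$ converges to a finite limit and that $\sum_k \gamma_k\,\E[h_k\mid\Wcal_k] < \infty$. Convergence of $V_k$ immediately gives the claimed boundedness of $(\v{x}_k, \v{z}_k)$. To pass from the conditional gap to the pathwise one, I would split $\gamma_k h_k = \gamma_k\E[h_k\mid\Wcal_k] + \gamma_k(h_k - \E[h_k\mid\Wcal_k])$; the partial sums of the second piece form a martingale, and since $h_k$ depends on $\omega_k$ only through $\v{x}_{k+1}$, its conditional variance is $O(\gamma_k^2)$ on the (now bounded) path, using Assumption \ref{assumptions}(b) to control the subgradient fluctuation and Assumption \ref{assumptions}(a) to supply Lipschitz continuity of $F + \v{z}_\star^\T \v{G}$. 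Hence $\sum_k \gamma_k^2\,\E[(h_k - \E[h_k\mid\Wcal_k])^2 \mid \Wcal_k] < \infty$, the martingale converges, and therefore $\sum_k \gamma_k h_k < \infty$ almost surely. Because $\sum_k \gamma_k = \infty$, this forces $\liminf_k h_k = 0$.

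Upgrading this $\liminf$ to the genuine limit asserted in the proposition is the crux. With $\sum_k\gamma_k h_k<\infty$ in hand, I would run an excursion argument: if $\limsup_k h_k = 2\epsilon > 0$ while $\liminf_k h_k = 0$, the nonnegative sequence $h_k$ must climb from below $\epsilon$ to above $2\epsilon$ infinitely often, and over each such crossing the accumulated weight contributes at least a fixed positive amount to $\sum_k\gamma_k h_k$, contradicting summability. This needs a one-sided increment estimate $h_{k+1} - h_k \le \gamma_k R_k$, which follows from the updates since $\|\v{x}_{k+1}-\v{x}_k\|$ and $\|\v{z}_{k+1}-\v{z}_k\|$ are $\gamma_k$ times stochastic subgradients. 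The main obstacle is that these sample subgradients are only variance-bounded (Assumption \ref{assumptions}(b),(c)) rather than uniformly bounded, so $R_k$ is random; I would neutralize this with a localization (stopping-time) argument that caps $\sup_k\|\v{z}_k\|$ at a level $M$, carries out the martingale-convergence and excursion steps with $M$-dependent constants, and then lets $M\to\infty$ using that $\sup_k\|\v{z}_k\|$ is almost surely finite by the boundedness already established. Finally, with $h_k\to 0$ and the vanishing increments $\|\v{x}_{k+1}-\v{x}_k\|\to 0$, the re-indexed quantity $\Lcal(\v{x}_k, \v{z}_\star) - \Lcal(\v{x}_\star, \v{z}_k)$ also tends to $0$, completing the proof.
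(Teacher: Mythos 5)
Your proposal runs on the same engine as the paper's proof---the Robbins--Siegmund almost-supermartingale theorem applied to the one-step dissipation inequality---but it routes around a measurability difficulty that the paper dissolves at the outset. The paper does not apply the theorem to \eqref{eq:almostMartingale} with the gap evaluated at $\v{x}_{k+1}$; it first replaces $\E[\Lcal(\v{x}_{k+1},\v{z}_\star)\mid\Wcal_k]$ by $\Lcal(\v{x}_k,\v{z}_\star)$, paying an extra $\gamma_k^2\left(Q_1+Q_2+Q_3\|\v{z}_k\|^2\right)$ via convexity, Young's inequality, the bound $\|\nabla_x\Lcal(\v{x},\v{z}_\star)\|^2\le 2Q_1$, and the non-expansiveness of the projection (see \eqref{eq:1step.Lnorm}--\eqref{eq:Lbound}). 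After this index shift the term $n_k=\gamma_k\left[\Lcal(\v{x}_k,\v{z}_\star)-\Lcal(\v{x}_\star,\v{z}_k)\right]$ is $\Wcal_k$-measurable, so \cite[Theorem 1]{robbins1971convergence} delivers \emph{pathwise} summability $\sum_k n_k<\infty$ in one stroke, and your separate martingale-difference step for passing from $\sum_k\gamma_k\E[h_k\mid\Wcal_k]<\infty$ to $\sum_k\gamma_k h_k<\infty$ becomes unnecessary. Your version of that step is itself workable---after localizing $\sup_k\|\v{z}_k\|$, the conditional variance of $h_k$ is $O(\gamma_k^2)$ by exactly the paper's estimate \eqref{eq:xk.xk1} combined with Lipschitz continuity of $F+\v{z}_\star^{\T}\v{G}$ from Assumption \ref{assumptions}(a)---but a lighter alternative exists: since $\gamma_k h_k\ge 0$ is adapted to the filtration, L\'evy's conditional Borel--Cantelli lemma gives $\{\sum_k\gamma_k\E[h_k\mid\Wcal_k]<\infty\}\subseteq\{\sum_k\gamma_k h_k<\infty\}$ almost surely, with no variance computation at all. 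What the paper's route buys is brevity; what yours buys is that it works directly from \eqref{eq:almostMartingale} without re-deriving the shifted inequality.

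On the $\liminf$-to-$\lim$ upgrade, you are in fact more careful than the paper: its proof concludes the limit from $\sum_k n_k<\infty$ plus non-summability of $\v{\gamma}$, which strictly yields only $\liminf_k h_k=0$, so your excursion argument targets a step the paper leaves implicit. Your sketch has one soft spot there: localization caps $\sup_k\|\v{z}_k\|$, which controls the \emph{moments} of your increment bound $R_k$, but under Assumption \ref{assumptions}(b)--(c) the sampled subgradients (and hence $R_k$ itself) remain unbounded, so on a crossing window $\sum\gamma_k R_k$ is not dominated by a constant times $\sum\gamma_k$. The repair is one more decomposition, $R_k=\E[R_k\mid\Wcal_k]+\left(R_k-\E[R_k\mid\Wcal_k]\right)$: the conditional-mean part is bounded on the localized event, and its window contribution vanishes because $h_k\ge\epsilon$ on the window forces $\sum_{\mathrm{window}}\gamma_k\le\epsilon^{-1}\sum_{\mathrm{window}}\gamma_k h_k\to 0$; the centered part has conditional variances $O(\gamma_k^2)$, summable, so its weighted partial sums converge and the window sums are Cauchy, hence also vanish. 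With that addition the crossing argument closes, and your final re-indexing from $\Lcal(\v{x}_{k+1},\v{z}_\star)$ to $\Lcal(\v{x}_k,\v{z}_\star)$ via $\|\v{x}_{k+1}-\v{x}_k\|\to 0$ is fine.
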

This `gap' function $\Lcal(\v{x}, \v{z}_\star) - \Lcal(\v{x}_\star, \v{z})$ looks notoriously similar to the duality gap at $(\v{x}, \v{z})$, but is not the same. We are unaware of any results on asymptotic almost sure convergence of primal-dual first-order algorithms to an optimizer for constrained convex programs with convex, but not necessarily strongly convex, objectives. A recent result in \cite{yamashita2020passivity} establishes such a convergence in primal-dual dynamics in continuous time; our attempts at leveraging discretizations of the same have yet proven unsuccessful.

The proof of Proposition \ref{prop:iterates} takes advantage of the one-step update in \eqref{eq:1step}, that makes it amenable to the well-studied almost supermartingale convergence result by Robbins and Siegmund in \cite[Theorem 1]{robbins1971convergence}.
\begin{theoremnn}[Convergence of almost supermartingales]
	Let  $m_k, n_k, r_k, s_k$ be $\Fcal_k$-measurable finite nonnegative random variables, where $\Fcal_1 \subseteq \Fcal_2 \subseteq \ldots$ describes a filtration. If $\sum_{k=1}^\infty s_k < \infty$, $\sum_{k=1}^\infty r_k < \infty$, and
	\begin{align}
		\E[m_{k+1} | \Fcal_k] \le m_k(1 + s_k) + r_k - n_k, 
	\label{eq:rs}
	\end{align}
	then $\lim_{k \to \infty} m_k$ exists and is finite and $\sum_{k = 1}^\infty n_k < \infty$ almost surely.
	% \label{thm:robbinsSiegmund}
\end{theoremnn}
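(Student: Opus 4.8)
The plan is to reduce the almost-supermartingale $(m_k)$ to a genuine nonnegative supermartingale by a predictable rescaling, and then invoke Doob's convergence theorem. Since the only deviations from the supermartingale property in \eqref{eq:rs} are the multiplicative factor $(1+s_k)$ and the additive term $r_k$, I would first neutralize the multiplicative factor. Define $b_k := \prod_{i=1}^{k-1}(1+s_i)^{-1}$, which is positive, nonincreasing, and $\Fcal_{k-1}$-measurable. Because $\sum_i s_i < \infty$ almost surely, the product $\prod_i(1+s_i)$ converges, so $b_k \downarrow b_\infty$ with $b_\infty \in (0,1]$ almost surely. Multiplying \eqref{eq:rs} by the $\Fcal_k$-measurable quantity $b_{k+1}$ and using $b_{k+1}(1+s_k)=b_k$ gives
\begin{align}
\E[b_{k+1} m_{k+1} | \Fcal_k] \le b_k m_k + b_{k+1} r_k - b_{k+1} n_k.
\end{align}
Writing $m_k' := b_k m_k$, $r_k' := b_{k+1} r_k$, $n_k' := b_{k+1} n_k$, and noting $b_{k+1}\le 1$ so that $\sum_k r_k' \le \sum_k r_k < \infty$ almost surely, the recursion becomes $\E[m_{k+1}' | \Fcal_k] \le m_k' + r_k' - n_k'$.

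Next I would absorb the dissipative term $n_k'$ into a nonnegative process. Set $V_k := m_k' + \sum_{j=1}^{k-1} n_j' \ge 0$; a direct computation gives $\E[V_{k+1}|\Fcal_k] \le V_k + r_k'$. To compensate the remaining additive perturbation, let $A_k := \sum_{j=1}^{k-1} r_j'$, which is $\Fcal_{k-1}$-measurable, nondecreasing, and satisfies $A_\infty = \sum_j r_j' < \infty$ almost surely. Then $V_k - A_k$ is a true supermartingale.

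The main obstacle is that $V_k - A_k$ is bounded below only by $-A_\infty$, an almost surely finite but \emph{random} quantity; since we assume merely $\sum_k r_k < \infty$ almost surely rather than in $L^1$, one cannot control $\sup_k \E[(V_k-A_k)^-]$, and Doob's theorem does not apply directly. I would overcome this by localization. For each fixed $M>0$, define the stopping time $\tau_M := \inf\{k : A_{k+1} > M\}$ (a stopping time because $A_{k+1}$ is $\Fcal_k$-measurable). Since $A_{k\wedge\tau_M} \le M$ by construction and $V\ge 0$, the stopped process $(V_{k\wedge\tau_M} - A_{k\wedge\tau_M}) + M$ is a \emph{nonnegative} supermartingale, hence converges almost surely by the classical nonnegative supermartingale convergence theorem. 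On the event $\{A_\infty \le M\} = \{\tau_M = \infty\}$ the stopped and unstopped processes coincide, so $V_k - A_k$ converges there; since $\bigcup_{M} \{A_\infty \le M\}$ has probability one, $V_k - A_k$, and hence $V_k$ (as $A_k \to A_\infty$ finitely), converges almost surely to a finite limit.

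Finally I would read off the conclusions and undo the rescaling. Because $V_k = m_k' + \sum_{j=1}^{k-1} n_j'$ converges with both summands nonnegative and $\sum_{j=1}^{k-1} n_j'$ nondecreasing, the series $\sum_j n_j'$ converges and $m_k'$ converges, both almost surely. Dividing by $b_k \to b_\infty \in (0,\infty)$ gives $m_k = m_k'/b_k \to (\lim_k m_k')/b_\infty$, a finite limit almost surely. Likewise $n_k = n_k'/b_{k+1} \le n_k'/b_\infty$, whence $\sum_k n_k \le b_\infty^{-1}\sum_k n_k' < \infty$ almost surely, completing the argument. Everything outside the localization step is algebraic bookkeeping; the truncation at the level where $A$ first exceeds $M$ is the technical heart, precisely because only almost-sure summability of $(r_k)$ is available.
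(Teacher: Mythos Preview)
Your proof is correct. However, the paper does not supply its own proof of this statement: it is quoted verbatim as the classical Robbins--Siegmund almost-supermartingale convergence theorem (cited there as \cite[Theorem 1]{robbins1971convergence}) and used as a black-box tool in the proof of Proposition \ref{prop:iterates}. So there is no ``paper's own proof'' to compare against.

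That said, the argument you give is essentially the standard one from Robbins and Siegmund: neutralize the multiplicative perturbation by the predictable product $\prod(1+s_i)^{-1}$, absorb the $n_k'$ into a nonnegative process, compensate the additive $r_k'$ term, and then localize via the stopping time $\tau_M$ to convert the supermartingale bounded below by a random quantity into a genuinely nonnegative one so that Doob's theorem applies. All the measurability checks (that $b_{k+1}$ is $\Fcal_k$-measurable so it passes through the conditional expectation, that $A_{k+1}$ is $\Fcal_k$-measurable so $\tau_M$ is a stopping time, that $A_{k\wedge\tau_M}\le M$) are handled correctly, and the final unwinding via $b_k\to b_\infty\in(0,1]$ is clean. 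Nothing is missing.
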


\begin{proof}[Proof of Proposition \ref{prop:iterates}]
Using notation from the proof of Theorem \ref{thm:exp},  \eqref{eq:xTarget} and \eqref{eq:zTarget} together yields
\whencolumns{
\begin{align}
	\begin{aligned}
		&\gamma_k  \E[\Lcal(\v{x}_{k+1}, \v{z}_\star)  - \Lcal(\v{x}, \v{z}_k) | \Wcal_k] 
		+ \frac{1}{2} \E\left[ \| \v{x}_{k+1} - \v{x}_\star \|^2
		+  \| \v{z}_{k+1} - \v{z}_\star \|^2  | \Wcal_k \right]
		\\
		&\quad \le 
		\frac{1}{2} \left[ \| \v{x}_k - \v{x}_\star \|^2 
		+ \| \v{z}_k - \v{z}_\star \|^2 \right]
		+ \frac{1}{4}P_2\gamma_k^2
		+ \frac{1}{4}P_3 \gamma_k^2 \| \v{z}_k \|^2.
	\end{aligned}
	\label{eq:1step.as}
\end{align}
}{
\begin{align}
\begin{aligned}
	&\gamma_k  \E[\Lcal(\v{x}_{k+1}, \v{z}_\star)  - \Lcal(\v{x}_\star, \v{z}_k) | \Wcal_k] \\
	&\; + \frac{1}{2} \E\left[ \| \v{x}_{k+1} - \v{x}_\star \|^2
		+  \| \v{z}_{k+1} - \v{z}_\star \|^2  | \Wcal_k \right]
	\\
	& \quad \le 
	\frac{1}{2} \left[ \| \v{x}_k - \v{x}_\star \|^2 
	+ \| \v{z}_k - \v{z}_\star \|^2 \right] \\
	&\qquad + \frac{1}{4}P_2\gamma_k^2
	+ \frac{1}{4}P_3 \gamma_k^2 \| \v{z}_k \|^2.
\end{aligned}
\label{eq:1step.as}
\end{align}
}
We utilize the above to derive a similar inequality replacing $\E[\Lcal(\v{x}_{k+1}, \v{z}_\star) | \Wcal_k]$ with $\Lcal(\v{x}_{k}, \v{z}_\star)$ by bounding the difference between them. Then, we apply the almost supermartingale convergence theorem to the result to conclude the proof. To bound said difference, the convexity of $\Lcal$ in $\v{x}$ and Young's inequality together imply
\begin{align}
\begin{aligned}
	&\Lcal(\v{x}_k, \v{z}_\star) - \E[\Lcal(\v{x}_{k+1}, \v{z}_\star)|\Wcal_k]
	\\&\quad\le
	\langle \nabla \Lcal(\v{x}_{k+1}, \v{z}_\star), \v{x}_{k} - \v{x}_{k+1} \rangle
	\\ &\quad
	\le
	\frac{\gamma_k}{2}\E[\| {\nabla}_x \Lcal(\v{x}_{k+1}, \v{z}_\star) \|^2 | \Wcal_k]
	% \\
	% & \qquad 
	+ \frac{1}{2\gamma_k} \E[\| \v{x}_k - \v{x}_{k+1} \|^2 | \Wcal_k],
\end{aligned}
\label{eq:1step.Lnorm}
\end{align}
where ${\nabla}_x \Lcal$ denotes a subgradient of $\Lcal$ w.r.t. $\v{x}$. 
To further bound the RHS of \eqref{eq:1step.Lnorm}, Assumption \ref{assumptions} allows us to deduce
\whencolumns{
\begin{align}
\begin{aligned}
	\| \nabla \Lcal(\v{x}, \v{z}_\star) \|^2
	% &=
	% \left\| \nabla F(\v{x}) + \sum_{i = 1}^m z_\star^i \nabla G^i(\v{x}) \right\|^2
	% \\
	&\le 
	2 \| \nabla F(\v{x}) \|^2 + 2m\sum_{i = 1}^m\| z_\star^i \nabla G^i(\v{x}) \|^2
	\\
	&\le
	2 C_F^2 + 2m \| \v{z}_\star \|^2 \| \v{C}_G \|^2 
	\\
	& := 2Q_1.
\end{aligned}
\label{eq:normBound.L}
\end{align}
}{
\begin{align}
\hspace{-0.1in}
\begin{aligned}
	\| {\nabla}_x \Lcal(\v{x}, \v{z}_\star) \|^2
%	\\
	&=
	\left\| \nabla F(\v{x}) + \sum_{i = 1}^m z_\star^i \nabla G^i(\v{x}) \right\|^2
	\\
	&\le 
	2 \| \nabla F(\v{x}) \|^2 + 2m\sum_{i = 1}^m\| z_\star^i \nabla G^i(\v{x}) \|^2
	\\
	&\le
	2 C_F^2 + 2m \| \v{z}_\star \|^2 \| \v{C}_G \|^2 
	\\
	& := 2Q_1.
\end{aligned}
\label{eq:normBound.L}
\end{align}
}
for any $\v{x} \in \Xset$.
Furthermore, the $\v{x}$-update in \eqref{eq:proj.alg} and the non-expansive nature of the projection operator yield
\begin{align}
\begin{aligned}
&\frac{1}{\gamma_k^2}\E[ \| \v{x}_{k+1} - \v{x}_k \|^2 | \Wcal_k]
\\&\quad \le 
\E\left[ \left\| \nabla f_\omega(\v{x}_k) + \sum_{i = 1}^m z_k^i \nabla g_\omega^i(\v{x}_k) \right\|^2 | \Wcal_k \right]
\\
&\quad \leq
2 \E[ \| \nabla f_\omega(\v{x}_k) \|^2 | \Wcal_k] + 2m\sum_{i = 1}^m  \E\left[ (z_k^i)^2 \|\nabla g_\omega^i(\v{x}_k) \|^2 | \Wcal_k\right].
\end{aligned}
\label{eq:xk.xk1}
\end{align}
From Assumption \ref{assumptions}, we get
\begin{align}
\E[ \| \nabla f_\omega(\v{x}_k) \|^2 | \Wcal_k] 
& \leq 
2 \E[ \| \nabla f_\omega(\v{x}_k) - \E \nabla f_\omega(\v{x}_k)\|^2 + \| \E \nabla f_\omega(\v{x}_k) \|^2 | \Wcal_k] \notag
\\ &
\leq 2 \sigma_F^2 + 2 C_F^2,
%\label{eq:fTerm}
\end{align}
and along similar lines
\begin{align}
%\hspace{-0.1in}
\sum_{i = 1}^m  \E\left[ (z_k^i)^2 \|\nabla g_\omega^i(\v{x}_k) \|^2 | \Wcal_k\right]
\leq 2 (\| \v{\sigma}_G\|^2 + \|\v{C}_G\|^2) \| \v{z}_k \|^2,
%\label{eq:gTerm}
\end{align}
that together in \eqref{eq:xk.xk1} yield
\whencolumns{
\begin{align}
\frac{1}{\gamma_k^2}\E[ \| \v{x}_{k+1} - \v{x}_k \|^2 | \Wcal_k]
\leq \underbrace{4 (\sigma_F^2 + C_F^2)}_{:=2Q_2} 
+ \underbrace{4m(\| \v{\sigma}_G\|^2 + \|\v{C}_G\|^2)}_{:=2Q_3} \| \v{z}_k \|^2.
%\\
%& \quad \leq 2Q_2 + 2Q_3 \| \v{z}_k \|^2
\end{align}
}{
\begin{align*}
& \frac{1}{\gamma_k^2}\E[ \| \v{x}_{k+1} - \v{x}_k \|^2 | \Wcal_k]
\\
& \quad \leq \underbrace{4 (\sigma_F^2 + C_F^2)}_{:=2Q_2} 
+ \underbrace{4m(\| \v{\sigma}_G\|^2 + \|\v{C}_G\|^2)}_{:=2Q_3} \| \v{z}_k \|^2.
%\\
%& \quad \leq 2Q_2 + 2Q_3 \| \v{z}_k \|^2
\end{align*}
}
Combining the above with \eqref{eq:normBound.L} in \eqref{eq:1step.Lnorm} gives
\whencolumns{
\begin{align}
\begin{aligned}
	\gamma_k\left( \Lcal(\v{x}_k, \v{z}_\star) - \E[\Lcal(\v{x}_{k+1}, \v{z}_\star)|\Wcal_k] \right)
	% \\
	\le
	\gamma_k^2 (Q_1 + Q_2 + Q_3  \| \v{z}_k \|^2).
\end{aligned}
\label{eq:Lbound}
\end{align}
}{
\begin{align}
\begin{aligned}
	&\gamma_k\left( \Lcal(\v{x}_k, \v{z}_\star) - \E[\Lcal(\v{x}_{k+1}, \v{z}_\star)|\Wcal_k] \right)
	\\
	& \quad \le
	\gamma_k^2 (Q_1 + Q_2 + Q_3  \| \v{z}_k \|^2).
\end{aligned}
\label{eq:Lbound}
\end{align}
}
Adding \eqref{eq:Lbound} to \eqref{eq:1step.as} and simplifying, we obtain
\whencolumns{
\begin{align}
	\begin{aligned}
		&\frac{1}{2} \E\left[ \| \v{x}_{k+1} - \v{x}_\star \|^2
		+  \| \v{z}_{k+1} - \v{z}_\star \|^2  | \Wcal_k \right]
		\\
		&\quad \le 
		\frac{1}{2} \left[ \| \v{x}_k - \v{x}_\star \|^2 
		+ \| \v{z}_k - \v{z}_\star \|^2 \right]
		- \gamma_k  [\Lcal(\v{x}_{k}, \v{z}_\star) - \Lcal(\v{x}_\star, \v{z}_k)]
		\\
		&\qquad
		+ \gamma_k^2 \left( \frac{1}{4}P_2 + Q_1 + Q_2\right)
		+  \gamma_k^2 \left( \frac{1}{4}P_3 + Q_3 \right) \| \v{z}_k \|^2.
	\end{aligned}
	\label{eq:martingale}
\end{align}
}{
\begin{align}
%\begin{aligned}
	&\frac{1}{2} \E\left[ \| \v{x}_{k+1} - \v{x}_\star \|^2
		+  \| \v{z}_{k+1} - \v{z}_\star \|^2  | \Wcal_k \right]
	\\
	& \quad \le 
	\frac{1}{2} \left[ \| \v{x}_k - \v{x}_\star \|^2 
	+ \| \v{z}_k - \v{z}_\star \|^2 \right] \\
	&\qquad 
	- \gamma_k  [\Lcal(\v{x}_{k}, \v{z}_\star) - \Lcal(\v{x}_\star, \v{z}_k) ]  \\
	&\qquad
		+ \gamma_k^2 \left( \frac{1}{4}P_2 + Q_1 + Q_2\right)
		+  \gamma_k^2 \left( \frac{1}{4}P_3 + Q_3 \right) \| \v{z}_k \|^2.
%\end{aligned}
%	\label{eq:martingale}
\end{align}
}
The above inequality with
\begin{align}\| \v{z}_k \|^2 \leq 2\| \v{x}_k - \v{x}_\star \|^2 + 2 \| \v{z}_k - \v{z}_\star \|^2 + 2 \| \v{z}_\star \|^2
\end{align} 
becomes \eqref{eq:rs}, where
\whencolumns{
\begin{gather}
\begin{gathered}
	m_k = \frac{1}{2} \E\| \v{x}_k - \v{x}_\star \|^2 
	+ \frac{1}{2} \E \| \v{z}_k - \v{z}_\star \|^2, \quad
	n_k = \gamma_k  [\Lcal(\v{x}_{k}, \v{z}_\star) - \Lcal(\v{x}_\star, \v{z}_k) ], \\
	r_k = \gamma_k^2 \left[ \frac{1}{4}P_2 + Q_1 + Q_2 + \left( \frac{1}{2}P_3 + 2 Q_3 \right) \| \v{z}_\star \|^2  \right], \quad
	s_k = \gamma_k^2 \left( \frac{1}{2}P_3 + 2 Q_3 \right).
	\end{gathered}
	\end{gather}
}{
\begin{align}
	m_k &= \frac{1}{2} \| \v{x}_k - \v{x}_\star \|^2 
	+ \frac{1}{2} \| \v{z}_k - \v{z}_\star \|^2, \\
	n_k &= \gamma_k  [\Lcal(\v{x}_{k}, \v{z}_\star) - \Lcal(\v{x}_\star, \v{z}_k) ], \\
	r_k &= \gamma_k^2 \left[ \frac{1}{4}P_2 + Q_1 + Q_2 + \left( \frac{1}{2}P_3 + 2 Q_3 \right) \| \v{z}_\star \|^2  \right], \\
	s_k &= \gamma_k^2 \left( \frac{1}{2}P_3 + 2 Q_3 \right).
\end{align}
}
Each term is nonnegative, owing to \eqref{eq:saddle}, and $\v{\gamma}$ defines a square summable sequence. Applying \cite[Theorem 1]{robbins1971convergence}, $m_k$ converges to a constant and $\sum_{k = 1}^\infty n_k < \infty$. The latter combined with the non-summability of $\v{\gamma}$ implies the result.
\end{proof}

% subsection do_the_iterates_converge_ (end)

%!TEX root = mainCvarOpt.tex

\section{Algorithm for \texorpdfstring{$\PCVaR$}{PCVaR} and its analysis}
\label{sec:cvar}
%Having described an algorithm for $\PE$ and characterized its convergence properties in the last section, 
We now devote our attention to solving $\PCVaR$ via a primal-dual algorithm. To do so, we reformulate it as an instance of $\PE$ and utilize Algorithm \ref{alg:PDSS} to solve that reformulation \rev{with constant step-sizes} under a stronger set of assumptions given below.
\begin{assumption}\ 
\begin{enumerate}[label=(\alph*)]
\item Subgradients of $F$ and $\v{G}$ are bounded, i.e.,
\whencolumns{
$\| \nabla f_\omega(\v{x}) \| \leq C_F$ and $\|\nabla g_\omega^i(\v{x}) \| \leq C_G^i$ almost surely for all $\v{x} \in \Xset$.
}{
\begin{gather*}
\begin{aligned}	
\| \nabla f_\omega(\v{x}) \| \leq C_F, \quad 
\|\nabla g_\omega^i(\v{x}) \| \leq C_G^i
\end{aligned}
\end{gather*}
almost surely for all $\v{x} \in \Xset$.
}

\item $\v{g}_\omega(x)$ is bounded, i.e., $\| g^i_\omega(\v{x}) \| \le D_G^i$ for all $\v{x} \in \Xset$, almost surely.

% \item Problem \eqref{eq:prob.P.expp} admits a finite primal-dual optimizer ($\v{x}_\star, \v{z}_\star) \in \Xset \times \Rset_+^m$.

\item \rev{$\Lcal$ for $\PCVaR$ admits a saddle point $(\v{x}_\star, \v{z}_\star) \in \Xset \times \Rset^m_+$.}\footnote{Lemma \ref{lem:saddle} provides  sufficient conditions for the existence of such a saddle point.}

\end{enumerate}
\label{assumptions.cvar}
\end{assumption}

Using the variational characterization \eqref{eq:CVaR.def} of $\CVaR$, rewrite $\PCVaR$ as  
\whencolumns{
\begin{align}
\begin{aligned}
& \underset{\v{x} \in \Xset}{\text{minimize}} && \  \min_{u^0 \in \Rset} \E[ \psi^f_\omega(\v{x}, u^0; \alpha)], \\ & \text{subject to } && \   
\min_{u^i \in \Rset} \E[\psi^{g^i}_\omega(\v{x}, u^i; \beta^i)] \leq 0, \quad i=1,\ldots, m,
\end{aligned}
\label{eq:prob.P.cvar}
\end{align}
}{
\begin{align}
\begin{aligned}
\PCVaR : \quad
& \underset{\v{x} \in \Xset}{\text{minimize}} && \  \min_{u^0 \in \Rset} \E[ \psi^f_\omega(\v{x}, u^0; \alpha)], \\
& \text{subject to } && \   
\min_{u^i \in \Rset} \E[\psi^{g^i}_\omega(\v{x}, u^i; \beta^i)] \leq 0, \\
&&& \ \ \text{for each } i=1,\ldots, m,
\end{aligned}
\label{eq:prob.P.cvar}
\end{align}
}
where 
%we use the notation 
%\begin{equation*}
${\psi^h_\omega(\v{x}, u; \delta)} \coloneqq {u + \frac{1}{1-\delta}[h_\omega(\v{x}) - u]^+}$
%\end{equation*}
for any collection of convex functions $h_\omega: \Rset^n \to \Rset$, $\omega \in \Omega$.  Coupled with Assumption \ref{assumptions.cvar}, \rev{we will show that we can bound} $|u^i| \leq D_G^i$ for each $i = 1, \dots, m$\footnote{$\CVaR$ of any random variable can only vary between the mean and the maximum value that random variable can take.}, that allows us to rewrite $\PCVaR$ as
%, implying $\PCVaR$ can be written as
%\whencolumns{
%\begin{gather*}
%\begin{aligned}
%	& \underset{\v{x} \in \Xset}{\text{minimize}} && \  \min_{u^0 \in \Rset} \E[ \psi^f_\omega(\v{x}, u^0; \alpha)],
%	\\ & \text{subject to } && \   
%\min_{| u^i | \le D_G^i} \E[\psi^{g^i}_\omega(\v{x}, u^i; \beta^i)] \leq 0, \quad i=1,\ldots, m.
%\end{aligned}
%\end{gather*}
%}{
%\begin{gather*}
%\begin{aligned}
%	& \underset{\v{x} \in \Xset}{\text{minimize}} && \  \min_{u^0 \in \Rset} \E[ \psi^f_\omega(\v{x}, u^0; \alpha)],
%	\\ & \text{subject to } && \ \min_{| u^i | \le D_G^i} \E[\psi^{g^i}_\omega(\v{x}, u^i; \beta^i)] \leq 0, \\
%&&& \ \ \text{for each } i=1,\ldots, m,
%\end{aligned}
%\end{gather*}
%}
%Additionally, the constraint involving the minimum over $u^i$ is equivalent to requiring that there exists $u^i$ that satisfies the same. This observation allows us to reformulate $\PCVaR$ as the following instance of $\PE$, henceforth denoted as $\PE'$.
\whencolumns{
\begin{align}
\begin{aligned}
\PE' : \quad
& \underset{\substack{\v{x} \in \Xset, u^0 \in \Rset, \\ | \v{u} | \le \v{D}_G}}{\text{minimize}} && \ \E[ \psi^f_\omega(\v{x}, u^0; \alpha)], 
\\ 
& \text{subject to } && \  
\E[\psi^{g^i}_\omega(\v{x}, u^i; \beta^i)] \leq 0, \quad \text{for each } i=1,\ldots, m,
\end{aligned}
\label{eq:prob.P.expp}
\end{align}
}{
\begin{align}
\begin{aligned}
\PE' : \quad
& \underset{\substack{\v{x} \in \Xset, u^0 \in \Rset, \\ | \v{u} | \le \v{D}_G}}{\text{minimize}} && \ \E[ \psi^f_\omega(\v{x}, u^0; \alpha)], 
\\ 
& \text{subject to } && \  
\E[\psi^{g^i}_\omega(\v{x}, u^i; \beta^i)] \leq 0, \\
&&& \ \text{for each } i=1,\ldots, m,
\end{aligned}
\label{eq:prob.P.expp}
\end{align}
}
where $| \cdot |$ denotes the element-wise absolute value. 
%$\PCVaR$ and $\PE'$ are equivalent in the sense that any feasible point of $\PE'$ can be mapped to a feasible point of $\PCVaR$ and vice-versa. Also, their optimal values coincide. 
Call the optimal value of $\PCVaR$ as $p^\CVaR_\star$ in the sequel. 
%

%--------------------------------------------------------------------------------------------------

\begin{theorem}[Convergence result for $\PCVaR$]
	Suppose Assumption \ref{assumptions.cvar} holds. The iterates generated by Algorithm \ref{alg:PDSS} on $\PE'$ for $\PCVaR$ with parameters $\alpha, \v{\beta}$ satisfy
	\begin{alignat}{1}
		\E[\CVaR_{\alpha}(f_\omega(\bar{\v{x}}_{K+1}))] - p^\CVaR_\star &\le \frac{\eta(\alpha, \v{\beta})}{\sqrt{K}}, \label{eq:FBound.cvar} \\
		\E[\CVaR_{\beta^i}(g_\omega^i(\bar{\v{x}}_{K+1}))] &\le \frac{\eta(\alpha, \v{\beta})}{\sqrt{K}} \label{eq:GBound.cvar}
	\end{alignat}
for $i = 1, \ldots, m$ with step sizes $\gamma_k = \gamma / \sqrt{K}$ for $k = 1, \dots, K$ with $0 < \gamma < {P_3^{-1/2}(\alpha, \v{\beta})}$, where $\eta(\alpha, \v{\beta}) \coloneqq \frac{P_1 + \gamma^2 P_2(\alpha, \v{\beta})}{4\gamma(1 - \gamma^2 P_3(\alpha, \v{\beta}))}$ and
%	\begin{alignat}{1}
%	\begin{aligned}
%		P_2(\alpha, \v{\beta}) &\coloneqq \frac{16 (C_F^2 + 1)}{(1 - \alpha)^2} + 2 \| \v{B}_+ \v{B}_-^{-1} \v{D}_G \|^2, \\
%		P_3(\alpha, \v{\beta}) 
%		&\coloneqq 16 m \left[ \| \v{B}_-^{-1} \v{C}_G \|^2 + \| \v{B}_-^{-1} \bone \|^2 \right],
%	\end{aligned}
%	\label{eq:P.cvar}
%	\end{alignat}
	\whencolumns{
	\begin{alignat}{1}
	\hspace{-0.1in}
	\begin{aligned}
		P_2(\alpha, \v{\beta}) &\coloneqq \frac{16 (C_F^2 + 1)}{(1 - \alpha)^2} + 2 \vnorm{ \diag(\bone + \v{\beta}) \diag(\bone - \v{\beta})^{-1} \v{D}_G }^2, \\
		P_3(\alpha, \v{\beta}) 
		&\coloneqq 
%		16 m \left( \begin{aligned} \vnorm{ \diag(\bone - \v{\beta})^{-1} \v{C}_G}^2 \\ + \vnorm{\diag(\bone - \v{\beta})^{-1} \bone}^2 \end{aligned} \right).
		16 m \vnorm{ 
		\begin{pmatrix}
		\diag(\bone - \v{\beta})^{-1} \v{C}_G \\ 
		\diag(\bone - \v{\beta})^{-1} \bone 
		\end{pmatrix}
		}^2.
	\end{aligned}
	\label{eq:P.cvar}
	\end{alignat}
	}{
	\begin{alignat}{1}
	\hspace{-0.1in}
	\begin{aligned}
		P_2(\alpha, \v{\beta}) &\coloneqq \frac{16 (C_F^2 + 1)}{(1 - \alpha)^2} \\ &\qquad + 2 \vnorm{ \diag(\bone + \v{\beta}) \diag(\bone - \v{\beta})^{-1} \v{D}_G }^2, \\
		P_3(\alpha, \v{\beta}) 
		&\coloneqq 
%		16 m \left( \begin{aligned} \vnorm{ \diag(\bone - \v{\beta})^{-1} \v{C}_G}^2 \\ + \vnorm{\diag(\bone - \v{\beta})^{-1} \bone}^2 \end{aligned} \right).
		16 m \vnorm{ 
		\begin{pmatrix}
		\diag(\bone - \v{\beta})^{-1} \v{C}_G \\ 
		\diag(\bone - \v{\beta})^{-1} \bone 
		\end{pmatrix}
		}^2.
	\end{aligned}
	\label{eq:P.cvar}
	\end{alignat}
	}
%	where $\v{B}_-$ and $\v{B}_+$ denote diagonal matrices whose $i$-th entries are $1 - \beta^i$ and $1 + \beta^i$ respectively.
	\label{thm:cvar}
\end{theorem}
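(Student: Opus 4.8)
The plan is to realize $\PCVaR$ as a single instance $\PE'$ of the expectation-constrained problem $\PE$ over an augmented decision variable, and then invoke Theorem~\ref{thm:exp} almost verbatim, being careful only to recompute the constants $P_2, P_3$ for the reformulation. Concretely, I would introduce the lifted variable $\tilde{\v{x}} := (\v{x}, u^0, u^1, \dots, u^m)$ ranging over the closed convex set $\tilde{\Xset} := \Xset \times \Rset \times \prod_{i=1}^m [-D_G^i, D_G^i]$, with stochastic objective $\tilde f_\omega(\tilde{\v{x}}) := \psi^f_\omega(\v{x}, u^0; \alpha)$ and stochastic constraints $\tilde g^i_\omega(\tilde{\v{x}}) := \psi^{g^i}_\omega(\v{x}, u^i; \beta^i)$. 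The box restriction $|u^i| \le D_G^i$ is without loss of optimality because, under Assumption~\ref{assumptions.cvar}(b), the minimizing $u^i$ in \eqref{eq:CVaR.def} (a quantile of $g^i_\omega(\v{x})$) necessarily lies in $[-D_G^i, D_G^i]$; this step also supplies the almost-sure bound on $u^i$ on which the constant computation relies. Since each $\psi^h_\omega$ is jointly convex in $(\v{x}, u)$ and taking the expectation of the inner minimum recovers $\CVaR$, $\PE'$ is an exact instance of $\PE$ with the same optimal value $p^\CVaR_\star$, and projection onto $\tilde{\Xset}$ remains cheap.

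Next I would verify Assumption~\ref{assumptions} for $\PE'$ and extract the constants. Differentiating $\psi^f_\omega$ gives an $\v{x}$-subgradient $\tfrac{1}{1-\alpha}\mathbbm{1}\{f_\omega(\v{x}) > u^0\}\nabla f_\omega(\v{x})$ and a $u^0$-derivative lying in $\{1, -\tfrac{\alpha}{1-\alpha}\}$; bounding both by $\tfrac{1}{1-\alpha}$ times the corresponding data bound yields $(C_F')^2 \le (C_F^2+1)/(1-\alpha)^2$, and the analogous computation for $\psi^{g^i}_\omega$ gives $[(C_G')^i]^2 \le ((C_G^i)^2+1)/(1-\beta^i)^2$, which is exactly the squared norm of the stacked vector appearing in $P_3(\alpha,\v{\beta})$. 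For the second-moment bound, splitting on $\{g^i_\omega \le u^i\}$ versus $\{g^i_\omega > u^i\}$ and using $|u^i|, |g^i_\omega| \le D_G^i$ gives $|\tilde g^i_\omega| \le \tfrac{1+\beta^i}{1-\beta^i} D_G^i$, which is the factor in the second term of $P_2(\alpha,\v{\beta})$. The saddle-point requirement in Assumption~\ref{assumptions}(d) is inherited directly from Assumption~\ref{assumptions.cvar}(c).

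With these constants in hand I would apply Theorem~\ref{thm:exp} to $\PE'$. One point deserves care: the stated constants $16(C_F^2+1)/(1-\alpha)^2$ and $16m\|\cdot\|^2$ are a factor of two tighter than what a verbatim substitution of $\sigma_F' \gets C_F'$ into $P_2, P_3$ would give. This improvement comes precisely from the almost-sure subgradient bounds in Assumption~\ref{assumptions.cvar}(a): in the proof of Theorem~\ref{thm:exp}, the Young-inequality steps \eqref{eq:Young.1}--\eqref{eq:Young.2} no longer require the variance decomposition $\|u - v\|^2 \le 2\|u\|^2 + 2\|v\|^2$, so the term $\|\nabla \tilde F(\v{x}_{k+1}) - \nabla \tilde f_\omega(\v{x}_k)\|^2$ is bounded directly by $4(C_F')^2$ almost surely (and similarly for the constraints), halving the resulting constants and reproducing $P_2(\alpha,\v{\beta}), P_3(\alpha,\v{\beta})$ exactly.

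Finally I would translate the conclusion of Theorem~\ref{thm:exp} back into the CVaR bounds \eqref{eq:FBound.cvar}--\eqref{eq:GBound.cvar}. Writing $\bar{\tilde{\v{x}}}_{K+1} = (\bar{\v{x}}_{K+1}, \bar u^0_{K+1}, \dots, \bar u^m_{K+1})$ for the $\gamma$-weighted average, the variational formula \eqref{eq:CVaR.def} gives, for a fresh sample $\omega$ independent of the iterates, $\CVaR_\alpha[f_\omega(\bar{\v{x}}_{K+1})] \le \E[\psi^f_\omega(\bar{\v{x}}_{K+1}, \bar u^0_{K+1}; \alpha) \mid \bar{\tilde{\v{x}}}_{K+1}]$, since $\bar u^0_{K+1}$ is merely a feasible (not necessarily optimal) choice in the inner minimization; taking total expectations and using the tower property yields $\E[\CVaR_\alpha[f_\omega(\bar{\v{x}}_{K+1})]] \le \E[\tilde F(\bar{\tilde{\v{x}}}_{K+1})]$, and the same argument gives $\E[\CVaR_{\beta^i}[g^i_\omega(\bar{\v{x}}_{K+1})]] \le \E[\tilde G^i(\bar{\tilde{\v{x}}}_{K+1})]$. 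Combining these with the bounds of Theorem~\ref{thm:exp} applied to $\PE'$ (and with $p^\CVaR_\star$ being the optimal value of $\PE'$) completes the proof. The main obstacle is the bookkeeping in the constant computation together with the factor-of-two tightening argument; the independence of the evaluation sample from the iterates in the last step is the one conceptual subtlety that must not be overlooked.
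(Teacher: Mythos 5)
Your proposal is correct and follows essentially the same route as the paper's proof: lift to $\PE'$, recompute $P_2(\alpha,\v{\beta})$ and $P_3(\alpha,\v{\beta})$ by exploiting the almost-sure bounds of Assumption \ref{assumptions.cvar} to avoid the variance splitting in \eqref{eq:Young.1}--\eqref{eq:Young.2}, apply Theorem \ref{thm:exp} to the lifted problem, and translate back by treating the averaged $\bar{u}^0_{K+1}, \bar{u}^i_{K+1}$ as feasible but possibly suboptimal points in the variational formula \eqref{eq:CVaR.def}. The only step you compress is the paper's explicit verification (its step (c)) that the lifted Lagrangian $\Lcal'$ in \eqref{eq:L_PE'} admits a saddle point inherited from Assumption \ref{assumptions.cvar}(c), but your observation that a CVaR-minimizing $u^i$ necessarily lies in $[-D_G^i, D_G^i]$ is exactly the ingredient that makes this verification routine.
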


%--------------------------------

%\subsection{Proof of Theorem \ref{thm:cvar}}
%\label{sub:proof_of_theorem_cvar}
\begin{proof}
We prove the result in the following steps.
\begin{enumerate}[label=(\alph*)]
	\item Under Assumption \ref{assumptions.cvar}, we revise $P_2$ and $P_3$ in Theorem \ref{thm:exp} for $\PE$.
	\item We show that if $f_\omega, \v{g}_\omega$ satisfy Assumption \ref{assumptions.cvar}, then $\psi^f_\omega$ and $\psi^{g^i}_\omega, i=1,\ldots,m$ satisfy Assumption \ref{assumptions.cvar}, but with different bounds on the gradients and function values. Leveraging these bounds, we obtain $P_2(\alpha, \v{\beta})$ and $P_3(\alpha, \v{\beta})$ for $\PE'$ using step (a).
	
	\item \rev{Using Assumption \ref{assumptions.cvar}, we prove that the Lagrangian function $\Lcal':\Xset \times \Rset \times \Uset \times \Rset^m_+ \to \Rset$ defined as
	\begin{align}
	    \Lcal'(\v{x}, u^0, \v{u}, \v{z}) \coloneqq \E[ \psi^f_\omega(\v{x}, u^0; \alpha)] + \sum_{i=1}^m z^i \E[ \psi^f_\omega(\v{x}, u^0; \alpha)]
	    \label{eq:L_PE'}
	\end{align} 
	admits a saddle point in $\Xset \times \Rset \times \Uset \times \Rset^m_+$, where $\Uset \coloneqq \left\{\v{u} \in \Rset^m \mid | \v{u}| \leq \v{D}_G\right\}$.}

	\item We then apply Theorem \ref{thm:exp} with  $P_2(\alpha, \v{\beta})$ and $P_3(\alpha, \v{\beta})$  from step (b) on $\PE'$ to derive the bounds in \eqref{eq:FBound.cvar} and \eqref{eq:GBound.cvar}.
\end{enumerate}

%--------------------------------------------------------------------------------------------------

\noindent \emph{$\bullet$ Step (a) -- Revising Theorem \ref{thm:exp} with Assumption \ref{assumptions.cvar}\whencolumns{:}{}}
Recall that in the derivation of \eqref{eq:Young.1} in the proof of Theorem \ref{thm:exp}, Assumption \ref{assumptions} yields
\begin{align}
    \| \nabla F(\v{x}_{k+1}) - \nabla f_\omega(\v{x}_k) \|^2 \leq 2(4C_F^2 + \sigma_F^2).\end{align} 
Assumption \ref{assumptions.cvar} allows us to bound the same by $4C_F^2$, yielding $P_2 = 16 C_F^2 + 2\| \v{D}_G \|^2$. 
Along the same lines, we get $P_3 = 16 m \| \v{C}_G \|^2$.

%--------------------------------------------------------------------------------------------------

\noindent \emph{$\bullet$ Step (b) -- Deriving properties of $\psi_\omega$\whencolumns{:}{}}
Consider the stochastic subgradient of $\psi^f_\omega(\v{x}, t; \alpha)$ given by
\begin{alignat}{1}
	\nabla \psi_\omega^f(\v{x}, u; \alpha) &= \begin{pmatrix}
		\frac{1}{1 - \alpha} \nabla f_\omega(\v{x}) \indc_{\{ f_\omega(\v{x}) \ge u\}} \\
		1 - \frac{1}{1 - \alpha} \indc_{\{ f_\omega(\v{x}) \ge u\}}
	\end{pmatrix},
\end{alignat}
where $\indc_{\{\cdot \}}$ is the indicator function.
Recall that $\| \nabla f_\omega(\v{x}) \| \le C_F$ for all $\v{x} \in \Xset$ almost surely. Therefore, we have
\begin{align}
\begin{aligned}
	\| \nabla \psi_\omega^f(\v{x}, u; \alpha) \|^2
	& =
	\left\| \frac{1}{1 - \alpha} \nabla f_\omega(\v{x}) \indc_{\{ f_\omega(\v{x}) \ge u\}} \right\|^2
	+ \left\| 1 - \frac{1}{1 - \alpha} \indc_{\{ f_\omega(\v{x}) \ge u\}} \right\|^2 
	\\
	& \le
	\frac{C_F^2 + 1}{(1 - \alpha)^2}.
\end{aligned}
	\label{eq:cvar.subBound.f}
\end{align}
Proceeding similarly, we obtain
\begin{align}
\vnorm{\nabla \psi^{g^i}_\omega(\v{x}, u^i; \beta^i)}^2 
\leq \frac{[C_G^i]^2 + 1}{(1-\beta^{i})^2}.
% \leq \frac{C_G^2 + 1}{(1-\beta^{\max})^2}. 
\label{eq:cvar.subBound.g}
\end{align}
%
%Stacking $\psi^{g^i}_\omega(\v{x}, t;  \beta^i)$'s in $\psi^{\v{g}}_\omega(\v{x}, \v{t}; \v{\beta})$, we have
%\begin{align}
%\begin{aligned}
%	\| \psi^{\v{g}}_\omega(\v{x}, \v{t}; \v{\beta}) \| 
%	&= \vnorm{ \v{t} + \diag(\bone - \v{\beta})^{-1} [\v{g}_\omega(\v{x}) - \v{t}]^+} \\
%	&= \diag(\bone - \v{\beta})^{-1} \vnorm{ \diag(\bone - \v{\beta}) \v{t} +  [\v{g}_\omega(\v{x}) - \v{t}]^+} \\
%	&= \vnorm{  \max\left\lbrace \frac{g^i_\omega(\v{x}) - \beta^i t^i}{1 - \beta^i},  - t^i \right\rbrace } \\
%%	&\le t^i + \frac{1}{1 - \beta^i} [D_G^i - t^i] \\
%	&\le \frac{1 + \beta^i}{1 - \beta^i} D_G^i \\
%	&\le \frac{1 + \beta^{\max}}{1 - \beta^{\max}} D_G^i.
%\end{aligned}
%	\label{eq:cvar.normBound}
%\end{align}
We also have
\whencolumns{
\begin{align}
\begin{aligned}
	\| \psi^{g^i}_\omega(\v{x}, u^i; \beta^i) \| 
	% &
%	= \vnorm{ u^i + \frac{1}{1 - \beta^i}[g^i_\omega(\v{x}) - u^i]^+} % \\
%	% &
	= \vnorm{  \max\left\lbrace \frac{g^i_\omega(\v{x}) - \beta^i u^i}{1 - \beta^i},  u^i \right\rbrace } % \\
%	&\le t^i + \frac{1}{1 - \beta^i} [D_G^i - t^i] \\
	% &
	\le \frac{1 + \beta^i}{1 - \beta^i} D_G^i.
	% &\le \frac{1 + \beta^{\max}}{1 - \beta^{\max}} D_G^i.
\end{aligned}
	\label{eq:cvar.normBound}
\end{align}
}{
\begin{align}
\begin{aligned}
	\| \psi^{g^i}_\omega(\v{x}, u^i; \beta^i) \| 
	&
	= \vnorm{ u^i + \frac{1}{1 - \beta^i}[g^i_\omega(\v{x}) - u^i]^+}
	\\
	&= \vnorm{  \max\left\lbrace \frac{g^i_\omega(\v{x}) - \beta^i u^i}{1 - \beta^i},  u^i \right\rbrace }
%	&\le t^i + \frac{1}{1 - \beta^i} [D_G^i - t^i] \\
	% &
	\\
	&\le \frac{1 + \beta^i}{1 - \beta^i} D_G^i.
	% &\le \frac{1 + \beta^{\max}}{1 - \beta^{\max}} D_G^i.
\end{aligned}
	\label{eq:cvar.normBound}
\end{align}
}
Then, \eqref{eq:P.cvar} follows from step (a) using \eqref{eq:cvar.subBound.f}, \eqref{eq:cvar.subBound.g}, and \eqref{eq:cvar.normBound}.

%--------------------------------------------------------------

\noindent \rev{\emph{$\bullet$ Step (c) -- Showing that $\Lcal'$ admits a saddle point\whencolumns{:}{}}
According to \cite[Theorem 10]{rockafellar2002optimization}, the minimizers of $\E[ \psi^f_\omega(\v{x}, u^0; \alpha)]$ over $u^0$ define a nonempty closed bounded interval (possibly a singleton). Thus, we have
\begin{align}
    F(\v{x}) = \E[ \psi^f_\omega(\v{x}, u^0(\v{x}); \alpha)]
    \label{eq:FCVaR.rep}
\end{align} 
for some $u^0(\v{x})\in \Rset$ for each $\v{x} \in \Xset$. Similarly, we infer
\begin{align}
    G^i(\v{x}) = \E[ \psi^{g^i}_\omega(\v{x}, u^i(\v{x}); \beta^i)]
    \label{eq:GCVaR.rep}
\end{align} 
for some $u^i(\v{x})\in \Rset$ for each $\v{x} \in \Xset$. Moreover, for all $u^i > D_G^i$, we have 
\begin{align}
    \E[ \psi^{g^i}_\omega(\v{x}, u^i; \beta^i)] = u^i,
\end{align}
 and for $u^i < -D_G^i$, we have
\begin{align}
    \E[ \psi^{g^i}_\omega(\v{x}, u^i; \beta^i)] 
    % = u^i + \frac{1}{1-\beta^i}\E[g^i_\omega(\v{x}) - u^i] 
    =  \frac{1}{1-\beta^i} \left(\E[g^i_\omega(\v{x}) 
    -{\beta^i}u^i \right).
\end{align}
Thus $\E[ \psi^{g^i}_\omega(\v{x}, u^i; \beta^i)]$ is non-increasing in $u^i$ below $-D_G^i$ and increasing in it beyond $D_G^i$. Hence, at least one among the minimizers of $\E[ \psi^{g^i}_\omega(\v{x}, u^i; \beta^i)]$ must lie in $[-D_G^i, D_G^i]$. In the sequel, let  $u^i(\v{x})$ refer to such a minimizer.}

\rev{Consider a saddle point $(\v{x}_\star, \v{z}_\star) \in \Xset \times \Rset^m_+$ of $\PCVaR$. We argue that $(\v{x}_\star, u^0(\v{x}_\star), \v{u}(\v{x}_\star), \v{z}_\star)$ is a saddle point of $\Lcal'$. From the definitions of $\Lcal$, $\Lcal'$,  \eqref{eq:FCVaR.rep}, \eqref{eq:GCVaR.rep}, and the saddle point property of $(\v{x}_\star, \v{z}_\star)$, we obtain
% From the definition of $\Lcal'$, it follows that
\begin{align}
\begin{aligned}
\Lcal'(\v{x}_\star, u^0(\v{x}_\star), \v{u}(\v{x}_\star), \v{z}_\star)
% &= 
% F(\v{x}_\star) + \v{z}^\T \v{G}(\v{x}_\star)
% \\
&=
\Lcal(\v{x}_\star, \v{z}_\star)
\\
&\leq 
\Lcal(\v{x}, \v{z}_\star)
\\
% &=
% F(\v{x}) + \v{z}_\star^\T \v{G}(\v{x}) , \v{z}_\star)
% \\
&=
\E[ \psi^{f}_\omega(\v{x}, u^0(\v{x}); \alpha)] + \sum_{i=1}^m z^i_\star \E[ \psi^{g^i}_\omega(\v{x}, u^i(\v{x}); \beta^i)]
\\
&\leq \Lcal'(\v{x}, u^0, \v{u}, \v{z}_\star)
\end{aligned}
\label{eq:saddle.cvar.1}
\end{align}
for all $(\v{x}, u^0, \v{u}) \in \Xset \times \Rset \times \Uset$. Also, for all $\v{z} \in \Rset^m_+$, we have 
\begin{align}
\begin{aligned}
\Lcal'(\v{x}_\star, u^0(\v{x}_\star), \v{u}(\v{x}_\star), \v{z})
= \Lcal(\v{x}_\star, \v{z})
% \\
\leq \Lcal(\v{x}_\star, \v{z}_\star)
% \\
=
\Lcal'(\v{x}_\star, u^0(\v{x}_\star), \v{u}(\v{x}_\star), \v{z}_\star).
\end{aligned}
\label{eq:saddle.cvar.2}
\end{align}}

%--------------------------------------------------------------------------------------------------

\noindent \emph{$\bullet$ Step (d) -- Proof of \eqref{eq:FBound.cvar} and \eqref{eq:GBound.cvar}\whencolumns{:}{}}
\rev{By the saddle point theorem and \eqref{eq:saddle.cvar.1}, we have $\Lcal(\v{x}_\star, \v{z}_\star) = p_\star^\CVaR$, that also equals the optimal value of $\PE'$.} 
Applying Theorem \ref{thm:exp} with revised $P_2$ and $P_3$ from step (b) to $\PE'$ for which $\v{x}_0, \dots, \v{x}_{K+1}$ and $u^0_0, \dots, u^0_{K+1}$ are $\Wcal_{K+1/2}$-measurable, we obtain
\whencolumns{
\begin{alignat}{1}
\E[ \CVaR_\alpha(f_\omega(\bar{\v{x}}_{K+1}))  ] 
%\\
&= \E\left[ 
\min_{u^0\in\Rset} 
\E[ \psi^f_\omega(\bar{\v{x}}_{K+1}, u^0; \alpha) | \Wcal_{K+1/2}]
\right] 
\notag \\
&\leq \E\left[  
\E[ \psi^f_\omega(\bar{\v{x}}_{K+1}, \bar{u}^0_{K+1}; \alpha) | \Wcal_{K+1/2}]
\right] 
\notag \\
&= \E\left[ \psi^f_\omega(\bar{\v{x}}_{K+1}, \bar{u}^0_{K+1}; \alpha) 
\right] 
\notag \\
%&\leq p_\E^\star + \frac{\eta(\alpha, \v{\beta})}{\sqrt{K}}
%\notag \\
& \leq p^\CVaR_\star + \frac{\eta(\alpha, \v{\beta})}{\sqrt{K}}.
% \label{eq:cvar.exp}
\end{alignat}
}{
\begin{alignat}{1}
&\E[ \CVaR_\alpha(f_\omega(\bar{\v{x}}_{K+1}))  ] 
\notag \\
&\quad = \E\left[ 
\min_{u^0\in\Rset} 
\E[ \psi^f_\omega(\bar{\v{x}}_{K+1}, u^0; \alpha) | \Wcal_{K+1/2}]
\right] 
\notag \\
&\quad \leq \E\left[  
\E[ \psi^f_\omega(\bar{\v{x}}_{K+1}, \bar{u}^0_{K+1}; \alpha) | \Wcal_{K+1/2}]
\right] 
\notag \\
&\quad = \E\left[ \psi^f_\omega(\bar{\v{x}}_{K+1}, \bar{u}^0_{K+1}; \alpha) 
\right] 
\notag \\
%&\leq p_\E^\star + \frac{\eta(\alpha, \v{\beta})}{\sqrt{K}}
%\notag \\
&\quad \leq p^\CVaR_\star + \frac{\eta(\alpha, \v{\beta})}{\sqrt{K}}.
% \label{eq:cvar.exp}
\end{alignat}
}
%where $\eta(\alpha, \v{\beta})$ is derived with $P_2(\alpha,\v{\beta})$
%$$
%\eta(\alpha, \v{\beta}) := 
%$$
%
Following a similar argument for $i=1,\ldots,m$, we get 
\whencolumns{
\begin{alignat}{1}
\E\left[ \CVaR_{\beta^i}(g^i_\omega(\bar{\v{x}}_{K+1}))\right] 
= \E\left[ \min_{u^i\in\Rset} \E[\psi^{g^i}_\omega(\bar{\v{x}}_{K+1}, u^i; \beta^i)  | \Wcal_{K+1/2}] \right] 
% &\leq \E[ \E[\psi^{g^i}_\omega(\bar{\v{x}}_{K+1}, \bar{t}_{K+1}^i; \beta^i)  |  \Wcal_{K+1}] ] 
% \notag \\
% &= \E[\psi^{g_i}_\omega(\bar{\v{x}}_{K+1}, \bar{t}_{K+1}; \beta^i) ]
% \notag \\
\leq \frac{\eta(\alpha, \v{\beta})}{\sqrt{K}},
% \label{eq:cvar.exp}
\end{alignat}
}{
\begin{alignat}{1}
\E\left[ \CVaR_{\beta^i}(g^i_\omega(\bar{\v{x}}_{K+1}))\right] 
\notag \\
 = \E\left[ \min_{u^i\in\Rset} \E[\psi^{g^i}_\omega(\bar{\v{x}}_{K+1}, u^i; \beta^i)  | \Wcal_{K+1/2}] \right] 
% &\leq \E[ \E[\psi^{g^i}_\omega(\bar{\v{x}}_{K+1}, \bar{t}_{K+1}^i; \beta^i)  |  \Wcal_{K+1}] ] 
% \notag \\
% &= \E[\psi^{g_i}_\omega(\bar{\v{x}}_{K+1}, \bar{t}_{K+1}; \beta^i) ]
% \notag \\
 \leq  \frac{\eta(\alpha, \v{\beta})}{\sqrt{K}},
% \label{eq:cvar.exp}
\end{alignat}
}
completing the proof.
\end{proof}
%---

Our proof architecture generalizes to problems with other risk measures as long as that measure preserves convexity of $f_\omega, \v{g}_\omega$, admits a variational characterization as in \eqref{eq:CVaR.def}, and a subgradient for this modified objective can be easily computed and remains bounded over $\Xset$. We restrict our attention to $\CVaR$ to keep the exposition concrete.

\rev{Opposed to sample average approximation (SAA) algorithms, we neither compute nor estimate $F(\v{x}) =\CVaR[f_\omega(\v{x})]$, $\v{G}(\v{x}) = \CVaR[\v{g}_\omega(\v{x})]$ for any given decision $\v{x}$ to run the algorithm. Yet, our analysis provides guarantees on the same at $\bar{\v{x}}_{K+1}$ in expectation. If one needs to compute $F$ at any decision variable, e.g., at $\bar{\v{x}}_{K+1}$, one can employ the variational characterization in \eqref{eq:CVaR.def}. Such evaluation requires additional computational effort. 
Notice that Theorem \ref{thm:cvar} does not relate $F(\bar{\v{x}}_{K+1})$ to $p_\star^\CVaR$ in an almost sure sense; it only relates the two in expectation according to \eqref{eq:FBound.cvar}, where the expectation is evaluated with respect to the stochastic sample path.}

$\CVaR$ of a random variable depends on the tail of its distribution. The higher the risk aversion, the further into the tail one needs to look, generally requiring more samples.
\rev{Even if we do not explicitly compute the tail-dependent CVaR relevant to the objective or the constraints, it is natural to expect our sample complexity to grow with risk aversion, which the following result confirms.}
% It is natural to expect our sample complexity to grow with risk aversion, which the following result confirms.
% This is not surprising, given that few samples simply cannot capture tail behavior, \rev{even if we do not explicitly compute the tail-dependent $\CVaR$ relevant to the objective or the constraints.}
\begin{proposition}
	Suppose Assumption \ref{assumptions.cvar} holds. For an $\ve$-approximately feasible and optimal solution of $\PCVaR$ with risk aversion parameters $\alpha, \v{\beta}$ using Algorithm \ref{alg:PDSS} on $\PE'$, then $\gamma_\star(\alpha, \v{\beta})$ and  $K_\star(\alpha, \v{\beta})$ from Proposition \ref{prop:gamma.K.opt}, respectively decreases and increases with both $\alpha$ and $\v{\beta}$.
\end{proposition}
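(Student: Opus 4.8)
The plan is to substitute the CVaR-specific constants $P_2(\alpha,\v{\beta})$ and $P_3(\alpha,\v{\beta})$ from \eqref{eq:P.cvar} into the closed forms for $\gamma_\star$ and $K_\star$ in \eqref{eq:gamma.K.opt} (holding $P_1$ fixed) and then track how each quantity moves as $\alpha$ and the individual $\beta^i$ increase. First I would record the elementary monotonicity of the constants: since $1/(1-\alpha)^2$ is increasing in $\alpha$ and each of $\frac{1+\beta^i}{1-\beta^i}$ and $\frac{1}{1-\beta^i}$ is increasing in $\beta^i$, the formula \eqref{eq:P.cvar} shows that $P_2(\alpha,\v{\beta})$ is strictly increasing in $\alpha$ and in every $\beta^i$, whereas $P_3(\alpha,\v{\beta})$ is independent of $\alpha$ and strictly increasing in every $\beta^i$.

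For the step size I would avoid differentiating the closed form (which is awkward in the $\v{\beta}$ direction, where $P_2$ and $P_3$ move together) and instead work with the first-order condition from the proof of Proposition \ref{prop:gamma.K.opt}. Writing $s := \gamma_\star^2$, the optimal step obeys $H(s,P_2,P_3) := P_2 P_3 s^2 + (P_2 + 3 P_1 P_3) s - P_1 = 0$. Since $\partial_s H = 2 P_2 P_3 s + P_2 + 3 P_1 P_3 > 0$, $\partial_{P_2} H = P_3 s^2 + s > 0$, and $\partial_{P_3} H = P_2 s^2 + 3 P_1 s > 0$ for $s>0$, the implicit function theorem yields $\partial s/\partial P_2 < 0$ and $\partial s/\partial P_3 < 0$. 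Thus $\gamma_\star^2$ strictly decreases whenever $P_2$ or $P_3$ increases; combined with the first step this shows $\gamma_\star$ strictly decreases in $\alpha$ (only $P_2$ rises) and in each $\beta^i$ (both $P_2$ and $P_3$ rise).

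For $K_\star$ I would sidestep differentiation through the moving minimizer by using the envelope characterization implicit in \eqref{eq:step.minK}: the smallest feasible $K$ is
\begin{equation*}
\sqrt{K_\star} = \min_{0 < \gamma < P_3^{-1/2}} \Phi(\gamma), \qquad \Phi(\gamma) := \frac{P_1 + P_2 \gamma^2}{4\gamma(1 - P_3 \gamma^2)\ve},
\end{equation*}
with Proposition \ref{prop:gamma.K.opt} certifying $\gamma_\star$ as the minimizer. For any fixed feasible $\gamma$, $\Phi(\gamma)$ increases with $P_2$ (numerator) and with $P_3$ (its denominator shrinks), while raising $P_3$ also shrinks the feasible interval $\{0 < \gamma < P_3^{-1/2}\}$. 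Taking $\theta = (\alpha,\v{\beta}) \le \theta'$ with a strict increase in some coordinate, the minimizer $\gamma'$ for $\theta'$ is feasible for $\theta$ (since $P_3(\theta') \ge P_3(\theta)$ enlarges the interval), so
\begin{equation*}
\sqrt{K_\star(\theta)} \le \Phi(\gamma';\theta) \le \Phi(\gamma';\theta') = \sqrt{K_\star(\theta')},
\end{equation*}
where the middle step uses the pointwise monotonicity of $\Phi$ in $P_2,P_3$. Strict monotonicity of $P_2$ in $\alpha$ (and of both $P_2,P_3$ in each $\beta^i$), together with $\Phi$ being strictly increasing in $P_2$ at any $\gamma>0$, upgrades this to a strict inequality, so $K_\star$ strictly increases with both $\alpha$ and $\v{\beta}$.

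The main obstacle is the $\v{\beta}$ direction: there $P_2$ and $P_3$ increase simultaneously, and the naive closed-form route for $\gamma_\star$ through $y = 1 + P_2/(P_1 P_3)$ is inconclusive because the ratio $P_2/P_3$ need not be monotone. The implicit-function argument resolves this precisely because the two relevant partials of $H$ share the same sign. A secondary subtlety worth flagging is that $P_1$ itself depends on the risk parameters through the saddle point $(\v{x}_\star, u^0(\v{x}_\star), \v{u}(\v{x}_\star), \v{z}_\star)$ of $\PE'$; the statement should be read with $P_1$ held fixed, so that the asserted monotonicity isolates the explicit dependence carried by $P_2(\alpha,\v{\beta})$ and $P_3(\alpha,\v{\beta})$.
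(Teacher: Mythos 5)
Your proof is correct and reaches the paper's conclusion by a genuinely different route in its two key technical steps, even though the skeleton (record that $P_2(\alpha,\v{\beta})$ grows in $\alpha$ and each $\beta^i$ while $P_3(\alpha,\v{\beta})$ grows in each $\beta^i$, then push this through the optimized quantities) is the same. The paper works directly with the closed form \eqref{eq:gamma.K.opt}: for the $\alpha$-direction it chains $\frac{d\gamma_\star^2}{dy}\le 0$ with $\frac{dy}{dP_2}\ge 0$, and for the harder $\v{\beta}$-direction --- where $P_2$ and $P_3$ move together, exactly the obstacle you flag --- it verifies by explicit algebra (using $\frac{P_2}{P_1P_3}=y-1$) that $\frac{\partial \gamma_\star^2}{\partial P_3}\le 0$, then tracks $\frac{dK_\star}{d\beta^i}$ through chain-rule decompositions that invoke the sign of $\frac{\partial K_\star}{\partial \gamma}$ on either side of $\gamma_\star$. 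You replace the first computation with the implicit function theorem applied to the stationarity condition $H(s,P_2,P_3)=P_2P_3s^2+(P_2+3P_1P_3)s-P_1=0$, which is precisely the numerator of the derivative computed in the paper's own proof of Proposition \ref{prop:gamma.K.opt}; since all three partials of $H$ are positive at the unique positive root, both $\partial s/\partial P_2<0$ and $\partial s/\partial P_3<0$ follow in one stroke, avoiding the surd manipulation entirely. For $K_\star$ you replace differentiation through the moving minimizer with an exchange argument on the minimization in \eqref{eq:step.minK}: the competitor's minimizer $\gamma'$ is feasible for the less risk-averse problem because raising $P_3$ only shrinks the interval $\left(0,P_3^{-1/2}\right)$, and $\Phi$ is pointwise increasing in $P_2$ and $P_3$. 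This is more robust --- it needs no differentiability of $\gamma_\star$ in the parameters and yields monotonicity (with your strictness upgrade) directly --- whereas the paper's explicit derivatives have the side benefit of being reusable, e.g., in its discussion after Proposition \ref{prop:gamma.K.opt} of the effect of overestimating $P_1$. One small caveat on your strictness claim: $P_2$ is strictly increasing in $\beta^i$ only when $D_G^i>0$; in the degenerate case $D_G^i=0$ strictness still carries through $P_3$, since $\Phi$ is strictly increasing in $P_3$ at any $\gamma>0$ (and the proposition itself asserts only monotonicity, not strictness). Finally, your closing remark that $P_1$ itself depends on $(\alpha,\v{\beta})$ through the saddle point of $\PE'$, so the statement must be read with $P_1$ held fixed, is accurate and worth making explicit: the paper's proof likewise varies only $P_2$ and $P_3$ and leaves that dependence implicit.
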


\begin{proof} 
	We borrow the notation from Proposition \ref{prop:gamma.K.opt} and tackle the variation with $\alpha$ and $\v{\beta}$ separately.
	
\noindent \emph{$\bullet$ Variation with $\alpha$\whencolumns{:}{}}
	$P_2$ increases with $\alpha$, implying $\gamma_\star$ decreases with $\alpha$ because $\frac{d \gamma_\star^2}{d y} \le 0$ and $\frac{d y}{d P_2} \ge 0$. Furthermore, using $\frac{\partial K_\star}{\partial \gamma_\star} < 0$ for $\gamma < \gamma_\star$ and $\frac{\partial K_\star}{\partial P_2} \geq 0$ in
	\begin{equation}
		\frac{d K_\star}{d P_2} = \frac{\partial K_\star}{\partial P_2} + \frac{\partial K_\star}{\partial \gamma_\star}\frac{d \gamma_\star}{d P_2}
	\end{equation}
we infer that $K_\star$ increases with $\alpha$.

\noindent \emph{$\bullet$ Variation with $\beta^i$\whencolumns{:}{}} Both $P_2$ and $P_3$ increase with $\beta^i$ and
	\begin{align}
	\frac{d \gamma_\star^2}{d {\beta^i}} 
	= \frac{\partial \gamma_\star^2}{\partial P_2} \frac{d P_2}{d {\beta^i}}
	+ \frac{\partial \gamma_\star^2}{\partial P_3} \frac{d P_3}{d {\beta^i}}.
	\end{align}
Following an argument similar to that for the variation with $\alpha$, the first term on the RHS of the above equation can be shown to be nonpositive. Next, we show that the second term is nonpositive to conclude that $\gamma_\star$ decreases with $\beta^i$, where we use $\frac{d P_3}{d {\beta^i}} \ge 0$. Utilizing $\frac{P_2}{P_1 P_3} = y - 1$, we infer
\whencolumns{
\begin{align}
\begin{aligned}
\frac{\partial \gamma_\star^2}{\partial P_3} 
&= 
-\frac{2}{P_3^2(2 + y + \sqrt{y^2 + 8y})} + \frac{\partial \gamma_\star^2}{\partial y} \frac{\partial y}{\partial P_3} \\
&= 
-\frac{2}{P_3^2(2 + y + \sqrt{y^2 + 8y})}  
+ 2\frac{4 + y + \sqrt{y^2 + 8y}}{P_3 \sqrt{y^2 + 8y} (2 + y + \sqrt{y^2 + 8y})^2} \frac{P_2}{P_1 P_3^2}  \\
%&=
%-2\frac{{ \sqrt{y^2 + 8y} (2 + y + \sqrt{y^2 + 8y}) - (4 + y + \sqrt{y^2 + 8y}) \frac{P_2}{P_1 P_3} }}{P_3^2 \sqrt{y^2 + 8y} (2 + y + \sqrt{y^2 + 8y})^2}\\
&=
-2\frac{{ 5y + 4 +  3 \sqrt{y^2 + 8y} }}{P_3^2 \sqrt{y^2 + 8y} (2 + y + \sqrt{y^2 + 8y})^2} \\
& \leq 0.
\end{aligned}
\end{align}
}{
\begin{align}
\begin{aligned}
\frac{\partial \gamma_\star^2}{\partial P_3} 
&= 
-\frac{2}{P_3^2(2 + y + \sqrt{y^2 + 8y})} + \frac{\partial \gamma_\star^2}{\partial y} \frac{\partial y}{\partial P_3}  \\
&= 
-\frac{2}{P_3^2(2 + y + \sqrt{y^2 + 8y})}  
 \\
&\ \quad
+ 2\frac{4 + y + \sqrt{y^2 + 8y}}{P_3 \sqrt{y^2 + 8y} (2 + y + \sqrt{y^2 + 8y})^2} \frac{P_2}{P_1 P_3^2}  \\
%&=
%-2\frac{{ \sqrt{y^2 + 8y} (2 + y + \sqrt{y^2 + 8y}) - (4 + y + \sqrt{y^2 + 8y}) \frac{P_2}{P_1 P_3} }}{P_3^2 \sqrt{y^2 + 8y} (2 + y + \sqrt{y^2 + 8y})^2}\\
&=
-2\frac{{ 5y + 4 +  3 \sqrt{y^2 + 8y} }}{P_3^2 \sqrt{y^2 + 8y} (2 + y + \sqrt{y^2 + 8y})^2} \\
& \leq 0.
\end{aligned}
\end{align}
}
To characterize the variation of $K_\star$, notice that
	\begin{gather}
		\frac{d K_\star}{d \beta^i}
		=
		\frac{\partial K_\star}{\partial P_2}\frac{\partial P_2}{\partial \beta^i} 		+ \frac{\partial K_\star}{\partial P_3}\frac{\partial P_3}{\partial \beta^i} .
	\end{gather}
Again, the first term on the RHS of the above relation is nonnegative, owing to an argument similar to that used for the variation of $K_\star$ with $\alpha$. We show $\frac{\partial K_\star}{\partial P_3} \leq 0$ to conclude the proof. Treating $K_\star$ as a function of $P_3$ and $\gamma_\star$, we obtain
	\begin{gather}
		\frac{d K_\star}{d P_3}
		= \frac{\partial K_\star}{\partial P_3} 
		+ \frac{\partial K_\star}{\partial \gamma_\star} 
		\frac{\partial \gamma_\star}{\partial P_3}.
	\end{gather}
	It is straightforward to verify that the first summand is nonnegative. We have already argued that $\gamma_\star$ decreases with $P_3$, and $\frac{\partial K_\star}{\partial \gamma} < 0$ for $\gamma < \gamma_\star$, implying that the second summand is nonnegative as well, completing the proof.
$\square$
\end{proof}

%-----------------

It is easy to compute the optimized iteration count $K_\star(\alpha, \v{\beta})$ and the optimized \rev{constant step-size $\gamma_\star(\alpha, \v{\beta})/\sqrt{K_\star(\alpha, \v{\beta})}$} from Proposition \ref{prop:gamma.K.opt}. The formula is omitted for brevity. Instead, we derive additional insight by fixing $\v{\beta}$ and driving $\alpha$ towards unity. For such an $\alpha, \v{\beta}$, we have
\begin{align} P_2(\alpha, \v{\beta}) \sim (1-\alpha)^{-2}, \ \gamma_\star(\alpha, \v{\beta}) \sim (1-\alpha), \ K_\star(\alpha, \v{\beta}) \sim \frac{1}{\ve^2(1-\alpha)^2}. \end{align}
With $\alpha$ approaching unity, notice that $\PCVaR$ approaches a robust optimization problem. Thus, Algorithm \ref{alg:PDSS} for $\PE'$ is aiming to solve a robust optimization problem via sampling. Not surprisingly, the sample complexity exhibits unbounded growth with such robustness requirements, since we do not assume $\Omega$ to be finite. Also, this growth matches that of solving the SAA problem within $\ve$-tolerance on the unconstrained problem to minimize $\hat{F}(\v{x}) := \frac{1}{K} \sum_{j=1}^K \psi_{\omega^j}^f(\v{x}, u; \alpha)$. To see this, apply Theorem \ref{thm:exp} on $\hat{F}(\v{x})$ with optimized step size from Proposition \ref{prop:gamma.K.opt}, where $P_2 \sim \|\nabla \hat{F}(\v{x})\|^2  \sim (1-\alpha)^{-2}$ and $P_3=1$.

%Bounds on suboptimality and constraint violation in probability is generally difficult to derive from the bounds in expectation. 

Parallelization can lead to stronger bounds. More precisely, run stochastic approximation in parallel on $N$ machines, each with $K$ samples and compute $ \langle \bar{\v{x}}\rangle_{K+1} := \frac{1}{N} \sum_{j=1}^N \bar{\v{x}}_{K+1}[j]$ using $\bar{\v{x}}_{K+1} [1], \ldots, \bar{\v{x}}_{K+1} [N]$ obtained from the $N$ separate runs. Then, we have  
\begin{align}
\begin{aligned}
&\prob \left\{ G^i \left(\langle \bar{\v{x}}\rangle_{K+1} \right) \geq (1+ \tau) {\eta(\alpha, \v{\beta})}/{\sqrt{K}} \right\} \\
%& = \prob \left\{ \CVaR\left[{g^i_\omega \left(  \langle \bar{x}\rangle_{K+1} \right)} \right] \geq (1+ \tau) {\eta(\alpha, \v{\beta})}/{\sqrt{K}} \right\} \\
%& \quad \leq \prob \left\{ \CVaR\left[ \frac{1}{N} \sum_{j=1}^N { g^i_\omega \left(  \bar{\v{x}}_{K+1}[j]  \right)} \right] \geq (1+ \tau) \frac{\eta(\alpha, \v{\beta})}{\sqrt{K}} \right\} \\
&  \leq \prob \left\{\frac{1}{N} \sum_{j=1}^N \CVaR_{\beta^i}\left[  { g^i_\omega \left(  \bar{\v{x}}_{K+1}[j]  \right)} \right] \geq (1+ \tau) \frac{\eta(\alpha, \v{\beta})}{\sqrt{K}} \right\} \\
& \leq \exp \left(  - \frac{N \tau^2 \eta^2(\alpha, \v{\beta})}{{K [D^i_G]^2}} \right)
\end{aligned}
\label{eq:violation}
\end{align} 
for $i = 1,\ldots,m$ and $\tau > 0$. The steps combine coherence of $\CVaR$, convexity and uniform boundedness of $g^i_\omega$, Hoeffding's inequality and Theorem \ref{thm:cvar}. A similar bound can be derived for suboptimality. Thus, parallelized stochastic approximation produces a result whose $\Ocal(1/\sqrt{K})$-violation occurs with a probability that decays exponentially with  the degree of parallelization $N$.

%Given a fixed budget of violation probability $\nu$, the sample complexity required to approximately solve $\PCVaR$ scales as $\log(1/\nu)$, that bears a striking resemblance to the result in \cite{campi2008exact} with scenario approximation to chance-constrained programs. In fact, 

%\bose{The next bit may not be correct. I have to think a little more.}

The bound in \eqref{eq:violation} reveals an interesting connection with results for chance constrained programs. To describe the link, notice that $\CVaR_{\delta}[y_\omega] \leq 0$ implies $\prob \{ y_\omega \leq 0\} \geq 1-\delta$ for any random variable $y_\omega$ and $\delta \in [0, 1)$. Therefore, \eqref{eq:violation} implies
\begin{gather}
\prob \left\{ \prob \left\{ g^i_\omega \left(  \bar{\v{x}}_{K+1} \right) \leq C/{\sqrt{K}} \right\} \geq 1 - \beta^i \text{ is violated}\right\} \leq \exp \left(  - C' / K \right) \leq \nu,
\end{gather}
for constants $C, C'$. Said differently, our stochastic approximation algorithm requires $\Ocal(\log(1/{\nu}))$ samples to produce a solution that satisfies an $\Ocal( 1/\sqrt{\log(1/\sqrt{\nu})})$-approximate chance-constraint with a violation probability bounded by $\nu$. This result bears a striking similarity to that derived in \cite{campi2008exact}, where the authors deterministically enforce $\Ocal(\log(1/{\nu}))$ sampled constraints to produce a solution that satisfies the exact chance-constraint $\prob \left\{ g^i_\omega \left( \v{x} \right) \leq 0 \right\} \geq 1-\beta^i $ with a violation probability bounded by $\nu$. This resemblance in order-wise sample complexity is intriguing, given the significant differences between the algorithms.

%------------------

\subsection{An illustrative example}
\label{sec:example}

% \begin{example}
\rev{We explore the use of our algorithm on the following example problem
\begin{equation}
\begin{aligned}
	 \underset{-\frac{1}{2} \le x \le \frac{1}{2}}{\text{minimize}} \ \  \CVaR_{\alpha}\left[ \frac{1}{2} \left(x  - \omega -\frac{1}{2} \right)^2 \right],  \ \text{subject to} \  \CVaR_{\beta}\left[ x + \omega \right] \le 0.
\end{aligned}
\label{eq:example}
\end{equation}
Let $\omega \sim \frac{1}{3}{\sf beta}(2, 2)$ and consider the specific choice of risk parameters $\alpha = 0.3, \beta = 0.2$. To gain intuition into the optimal solution for this example, we numerically estimate $F(x)$ and $G^1(x)$ for each $x$ and plot them in Figure \ref{fig:example.FG}. To that end, we first obtain a million samples of $\omega$. Then, for each value of the decision variable $x$, we sort the objective function value $f_\omega(x)$ and the constraint function value $g_\omega(x)$ with these samples. We then estimate $F$ and $G^1$ as the average of the highest $1-\alpha=70\%$ and  $1-\beta=80\%$ among $f_\omega(x)$'s and $g_\omega(x)$'s, respectively, at each $x$ with those samples. The unique optimum for \eqref{eq:example} is numerically evaluated as $x_\star \approx -0.1929$ for which $F(x_\star) \approx 0.4042$ and $G^1(x_\star) \approx 0$.}

\rev{For this example, it is easy to show that $C_F = \frac{4}{3}$, $C_G = 1$ and $D_G = \frac{5}{6}$ that yields $P_2(0.3, 0.2) = \frac{8276}{93}$ and $P_3(0.3, 0.2) = 50$.
% \begin{equation*}
% \begin{gathered}
% 	P_2(\alpha, \beta) = \frac{400}{9}\frac{1}{(1 - \alpha)^2} + \frac{25}{18} \left(\frac{1 + \beta}{1 - \beta}\right)^2, \quad P_3(\alpha, \beta) = \frac{32}{(1 - \beta)^2}.
% \end{gathered}
% \end{equation*}
To run Algorithm \ref{alg:PDSS} on $\PE'$ derived from \eqref{eq:example}, we can use constant step-size $\gamma_k = \gamma/\sqrt{K}$ with a pre-determined number of steps $K$ for any $0 < \gamma < P_3^{-1/2}(0.3, 0.2) = \frac{1}{5\sqrt{2}}$. With any given $K$, Theorem \ref{thm:cvar} guarantees that the expected distance to $F(x_\star)$ and the expected constraint violation evaluated at $\bar{{x}}_{K+1}$ decays as $1/\sqrt{K}$. For a given $K$ and $\gamma < \frac{1}{5\sqrt{2}}$, calculating the precise bound $\eta(0.3, 0.2)/\sqrt{K}$ requires the knowledge of $P_1$ or its overestimate. For this example, $| x_\star | \leq \frac{1}{2}$ and $|u^1_\star| \leq D_G = \frac{5}{6}$. Also, $ | u^0_\star |$ is bounded above by the maximum value that $|f_\omega(x)|$ can take, that is given by $\frac{8}{9}$. Since we cannot determine $z_\star$ a priori, we assume $| z_\star | \le 2$ (that will later be shown to be consistent with our result). Starting from $(x_0, u^0_0, u^1_0, z_0) = 0$, we then obtain $P_1=\frac{3197}{81}$.
% With such a step size, we get 
% $$\eta(0.3, 0.2) = ....$$ 
To solve $\PCVaR$ (or equivalently $\PE'$) with a tolerance of $\ve=5 \times 10^{-3}$, we require $\eta(0.3, 0.2)/\sqrt{K} \leq 5 \times 10^{-3}$. With this tolerance and the values of $P_1, P_2, P_3$, Proposition \ref{prop:gamma.K.opt} yields an optimized
% 0.0808069
% 1354005781
$\gamma_\star = 0.0808$ and $K_\star \approx 1.35 \times 10^{9}$. We run Algorithm \ref{alg:PDSS} on $\PE'$ with constant step-size $\gamma_\star/\sqrt{K_\star}$ and plot $F$ and $G^1$ at the running ergodic mean of the iterates, i.e., at $\bar{x}_k := \frac{1}{k}\sum_{j=1}^k x_j$ for each $k$. Again $F$ and $G^1$ are evaluated numerically using the $\CVaR$-estimation procedure we outlined above.}

\begin{figure}[t]
	\centering
	\subfloat[]{
		\includegraphics[width=0.5\textwidth, trim=15 0 15 0, clip]{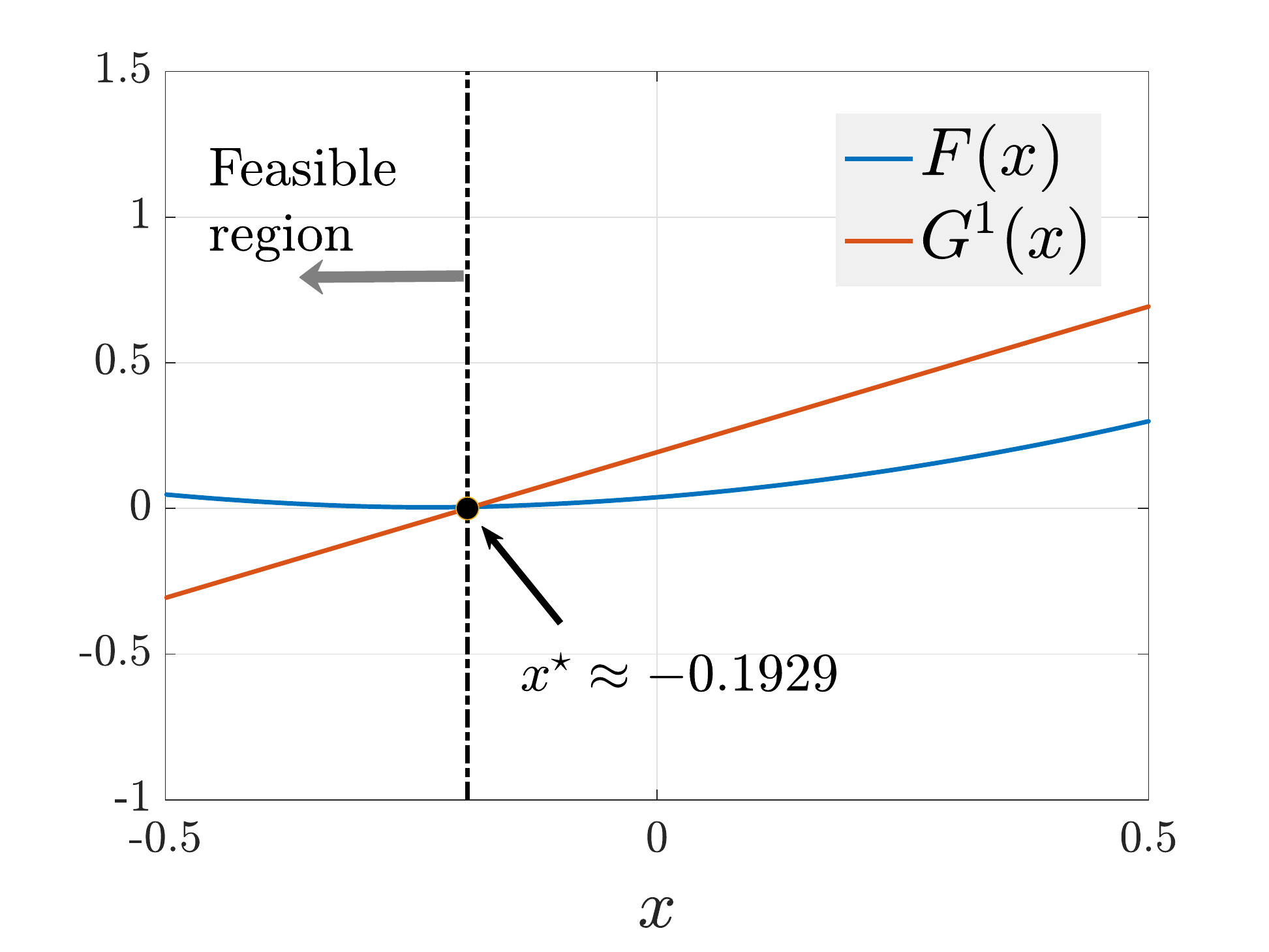}
		\label{fig:example.FG}
	} 
	\subfloat[]{
		\includegraphics[width=0.5\textwidth]{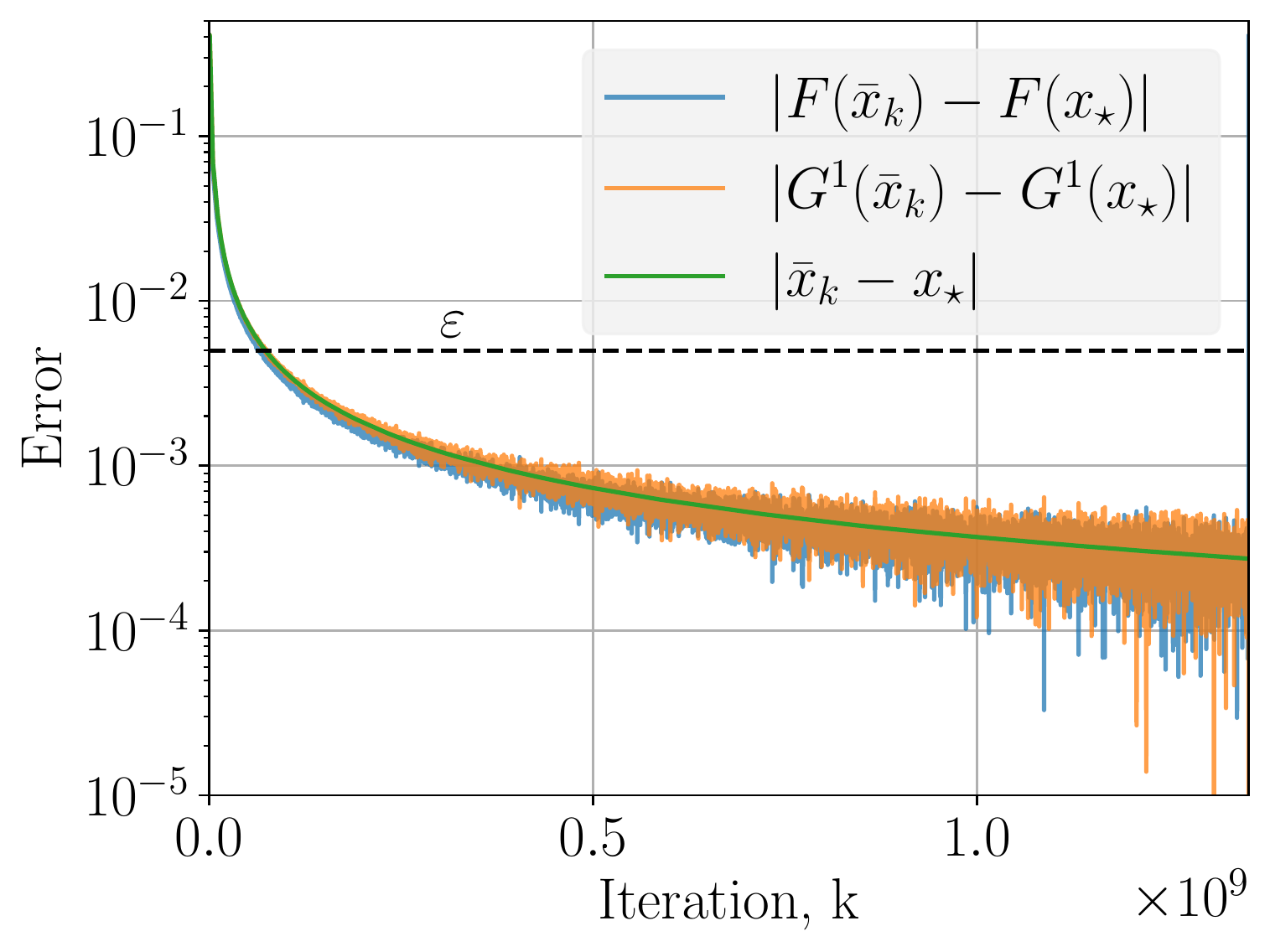}
		\label{fig:example.convergence}
	} 
% 	\\
% 		\subfloat[]{
% 		\includegraphics[width=0.5\textwidth]{K_example.eps}
% 		\label{fig:example.k}
% 	}
% 	\subfloat[]{
% 	    \includegraphics[width=0.5\textwidth]{convergence.pdf}
% 	    \label{fig:example.jacobi}
% 	}
	\caption{Plots of 
% 	\protect\subref{fig:example.k} iteration count as a function of $\alpha$ and $\beta$,
	\protect\subref{fig:example.FG} numerically estimated $F$ and $G^1$ over $\Xset = [-\frac{1}{2}, \frac{1}{2}]$, and \protect\subref{fig:example.convergence}  convergence of the running ergodic mean and $F, G$ evaluated at the mean for the example problem \eqref{eq:example} with $\alpha = 0.3$, $\beta = 0.2$. 
	}
\end{figure}

\rev{Notice that Theorem \ref{thm:cvar} only guarantees a bound on $F (\bar{x}_{K_\star+1}) - F(\bar{x}_\star)$ and $G^1(\bar{x}_{K_\star+1})$ in expectation. Thus, one would expect that only the average of the $\CVaR$ of $F$ and $G^1$ evaluated at $\bar{x}_{K_\star + 1}$ over multiple sample paths to respect the $\ve$-bound. However, our simulation yielded $\bar{x}_{K_\star + 1} = -0.1926$ and $\bar{z}_{K_\star + 1} = 0.8976$, for which
% $\bar{x}_{K_\star + 1} = -0.1925804207$, $\bar{z}_{K_\star + 1} = 0.8976265113$
\begin{gather}
\begin{gathered}
    F(\bar{x}_{K_\star + 1}) \approx 0.4040 \leq F(x_\star) + \ve \approx 0.4042 + 0.0050 = 0.4092, \\
    G^1(\bar{x}_{K_\star + 1}) \approx 0.0002 \leq G^1(x_\star) + \ve \approx 0 + 0.0050 = 0.0050,
\end{gathered}
\end{gather}
i.e., the ergodic mean after $K_\star$ iterations respects the $\ve$-bound over the plotted sample path. The same behavior was observed over multiple sample paths. The ergodic mean of the dual iterate is indeed consistent with our assumption $|z_\star| \leq 2$ made in deriving $\eta(0.3, 0.2)$. We point out that the ergodic mean in Figure \ref{fig:example.convergence} moves much more smoothly than our evaluation of $F$ and $G^1$ at those means, especially for large $k$. The noise in $F$ in $G^1$ emanate from the finitely many samples we use to evaluate $F$ and $G^1$. The errors appear much more pronounced at larger $k$, given the logarithmic scale of the plot.
}

\rev{The optimized iteration count $K_\star(\alpha, \beta)$ from Proposition \ref{prop:gamma.K.opt} with a modest $\alpha=0.3, \beta=0.2$ is quite high even for this simple example. This iteration count only grows with increased risk aversion as Figure \ref{fig:example.gamma.K.ab} reveals. 
% The step-size $\gamma_\star/\sqrt{K_\star}$, accordingly decreases with $\alpha, \beta$.
Figure \ref{fig:example.convergence} suggests that the $\ve=5\times 10^{-3}$ tolerance is met  earlier than $K_\star$ iterations. This is the downside of optimizing upper bounds to decide step-sizes for subgradient methods. Carefully designed termination criteria may prove useful in practical implementations.}

\begin{figure}[ht]
	\centering
	\includegraphics[width=0.95\textwidth,trim=75 0 75 0,clip]{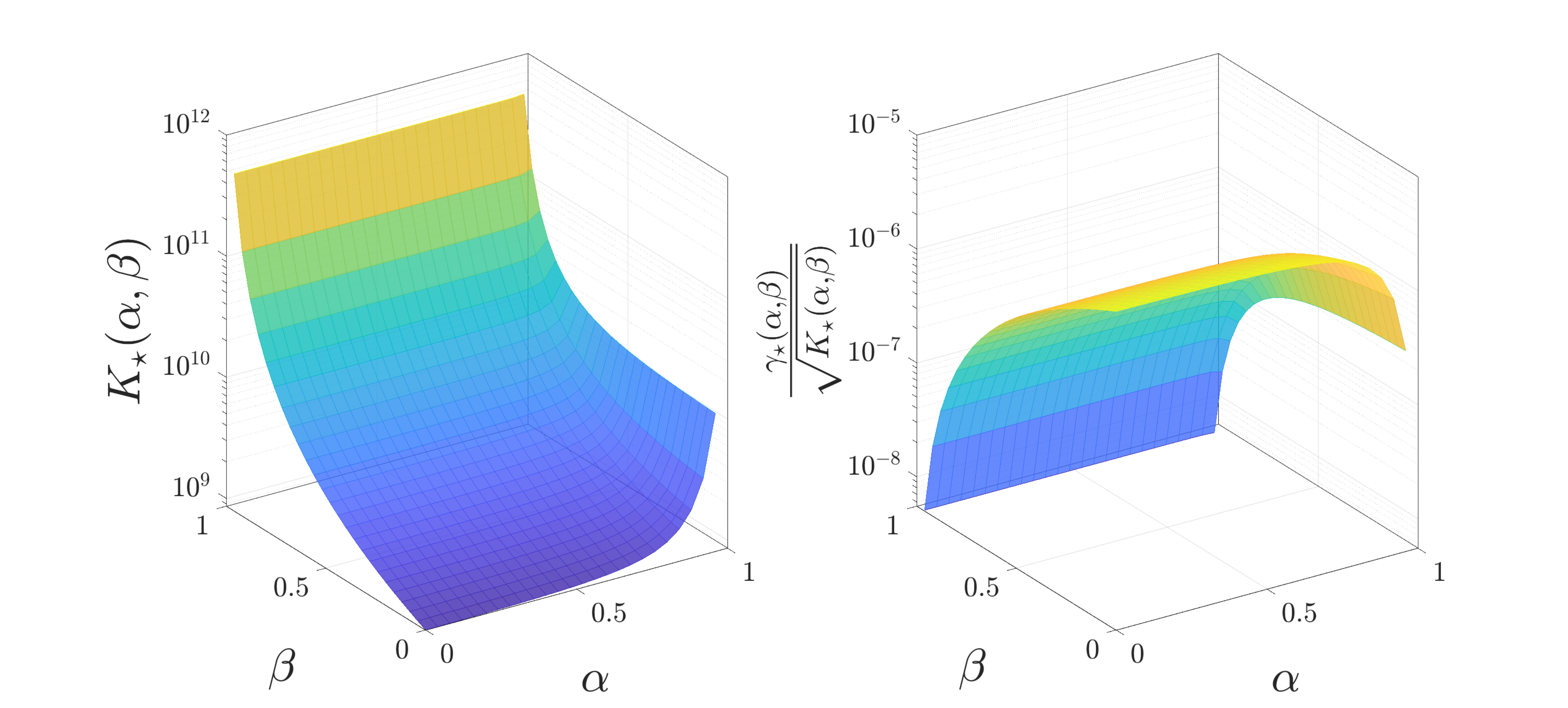}
	\caption{Plot of the optimized number of iterations $K_\star(\alpha, {\beta})$ on the left and the optimized step size $\gamma_\star(\alpha, \beta)/\sqrt{K_\star(\alpha, \beta)}$ on the right to achieve a tolerance of $\ve=5 \times 10^{-3}$ for the example problem in \eqref{eq:example}.}
	\label{fig:example.gamma.K.ab}
\end{figure}

\rev{We end the numerical example with a remark about the comparison of Algorithm \ref{alg:PDSS} that uses Gauss-Seidel-type dual update in \eqref{eq:zUpdate} and another that uses the popular Jacobi-type dual update on $\PE'$ for \eqref{eq:example} with $\alpha=0.3, \beta=0.2$. This alternate dual update replaces $\v{g}_{\omega_{k +1/2}}(\v{x}_{k+1})$ in \eqref{eq:zUpdate} by $\v{g}_{\omega_{k}}(\v{x}_{k})$. That is, the same sample $\omega_k$ is used for both the primal and the dual update. And, the primal iterate $\v{x}_k$ is used instead of $\v{x}_{k+1}$ to update the dual variable. We numerically compared this primal-dual algorithm with Algorithm \ref{alg:PDSS} with various choices of step-sizes (consistent with the requirements of Theorem \ref{thm:cvar}) and iteration count for our example and its variations. For each run, we found that the iterates from both these algorithms moved very similarly. The differences are too small to report. The Jacobi-type update requires half the number of samples compared to Algorithm \ref{alg:PDSS}. While the extra sample helps us in the theoretical analysis, our experience with this stylized example does not suggest any empirical advantage. A more thorough comparison between these algorithms, both theoretically and empirically, is left to future work.}

\section{Conclusions and future work}
\label{sec:conc}

In this paper, we study a stochastic approximation algorithm for $\CVaR$-sensitive optimization problems. Such problems are remarkably rich in their modeling power and encompass a plethora of stochastic programming problems with broad applications. 
%While the nature of the uncertainty may allow one to reformulate such problems into finite-dimensional convex programs, we take a different view in this work, and 
We study a primal-dual algorithm to solve that problem that processes samples in an online fashion, i.e., obtains samples and updates decision variables in each iteration. Such algorithms are useful when sampling is easy and intermediate approximate solutions, albeit inexact, are useful. The convergence analysis allows us to optimize the number of iterations required to reach a solution within a prescribed tolerance on expected suboptimality and constraint violation. The sample and iteration complexity predictably grows with risk-aversion. Our work affirms that a modeler must not only consider the attitude towards risk but also consider the computational burdens of risk in deciding the problem formulation.

Two possible extensions are of immediate interest. First, primal-dual algorithms find applications in multi-agent distributed optimization problems over a possibly time-varying communication network. We plan to extend our results to solve distributed risk-sensitive convex optimization problems over networks, borrowing techniques from \cite{nedic2009distributed,dominguez2013distributed}.
Second, the relationship to sample complexity for chance-constrained programs in \cite{campi2008exact} encourages us to pursue a possible exploration of stochastic approximation for such optimization problems.

%Second, Nesterov's pioneering work has generated substantial interest in accelerated first-order methods in optimization. While acceleration in stochastic unconstrained problems has been studied in \cite{johnson2013accelerating,jain2018accelerating}, we aim to explore its possible extension to a primal-dual setting for constrained optimization problems.

%Third, we believe that our analysis can be extended to study general saddle point problems including variational inequalities along the lines of \cite{baes2013randomized}.

\section{Acknowledgements}
We thank Eilyan Bitar, Rayadurgam Srikant, Tamer Ba\c{s}ar and Stan Uryasev for helpful discussions. This work was partially supported by the International Institute of Carbon-Neutral Energy Research (I$^2$CNER) and the Power System Engineering Research Center (PSERC).

\bibliographystyle{unsrt}
\bibliography{cvar_oco}

\begin{thebibliography}{10}

\bibitem{rockafellar2002optimization}
R~Tyrrell Rockafellar and Stanislav Uryasev.
\newblock Conditional value-at-risk for general loss distributions.
\newblock {\em Journal of banking \& finance}, 26(7):1443--1471, 2002.

\bibitem{kalogerias2018recursive}
Dionysios~S. Kalogerias and Warren~B. Powell.
\newblock Recursive optimization of convex risk measures: Mean-semideviation
  models, 2018.

\bibitem{bedi2019nonparametric}
Amrit~Singh Bedi, Alec Koppel, and Ketan Rajawat.
\newblock Nonparametric compositional stochastic optimization, 2019.

\bibitem{ogryczak1999stochastic}
W{\l}odzimierz Ogryczak and Andrzej Ruszczy{\'n}ski.
\newblock From stochastic dominance to mean-risk models: Semideviations as risk
  measures.
\newblock {\em European journal of operational research}, 116(1):33--50, 1999.

\bibitem{ruszczynski2006optimization}
Andrzej Ruszczy{\'n}ski and Alexander Shapiro.
\newblock Optimization of convex risk functions.
\newblock {\em Mathematics of operations research}, 31(3):433--452, 2006.

\bibitem{charnes1959chance}
Abraham Charnes and William~W Cooper.
\newblock Chance-constrained programming.
\newblock {\em Management science}, 6(1):73--79, 1959.

\bibitem{mafusalov2018buffered}
Alexander Mafusalov and Stan Uryasev.
\newblock Buffered probability of exceedance: mathematical properties and
  optimization.
\newblock {\em SIAM Journal on Optimization}, 28(2):1077--1103, 2018.

\bibitem{ahmadi2012entropic}
Amir Ahmadi-Javid.
\newblock Entropic value-at-risk: A new coherent risk measure.
\newblock {\em Journal of Optimization Theory and Applications},
  155(3):1105--1123, 2012.

\bibitem{miller2017optimal}
Christopher~W Miller and Insoon Yang.
\newblock Optimal control of conditional value-at-risk in continuous time.
\newblock {\em SIAM Journal on Control and Optimization}, 55(2):856--884, 2017.

\bibitem{kisiala2015conditional}
Jakob Kisiala.
\newblock Conditional value-at-risk: Theory and applications.
\newblock {\em arXiv preprint arXiv:1511.00140}, 2015.

\bibitem{ben2009robust}
Aharon Ben-Tal, Laurent El~Ghaoui, and Arkadi Nemirovski.
\newblock {\em Robust optimization}, volume~28.
\newblock Princeton University Press, 2009.

\bibitem{shapiro2007tutorial}
Alexander Shapiro and Andy Philpott.
\newblock A tutorial on stochastic programming.
\newblock {\em Manuscript. Available at www2. isye. gatech.
  edu/ashapiro/publications. html}, 17, 2007.

\bibitem{skaf2010design}
Joelle Skaf and Stephen~P Boyd.
\newblock Design of affine controllers via convex optimization.
\newblock {\em IEEE Transactions on Automatic Control}, 55(11):2476--2487,
  2010.

\bibitem{hadjiyiannis2011efficient}
Michael~J Hadjiyiannis, Paul~J Goulart, and Daniel Kuhn.
\newblock An efficient method to estimate the suboptimality of affine
  controllers.
\newblock {\em IEEE Transactions on Automatic Control}, 56(12):2841--2853,
  2011.

\bibitem{zhang2019derivatives}
Tong Zhang, Stan Uryasev, and Yongpei Guan.
\newblock Derivatives and subderivatives of buffered probability of exceedance.
\newblock {\em Operations Research Letters}, 47(2):130--132, 2019.

\bibitem{hanasusanto2016comment}
Grani~A Hanasusanto, Daniel Kuhn, and Wolfram Wiesemann.
\newblock A comment on “computational complexity of stochastic programming
  problems”.
\newblock {\em Mathematical Programming}, 159(1-2):557--569, 2016.

\bibitem{schmidt2017minimizing}
Mark Schmidt, Nicolas Le~Roux, and Francis Bach.
\newblock Minimizing finite sums with the stochastic average gradient.
\newblock {\em Mathematical Programming}, 162(1-2):83--112, 2017.

\bibitem{johnson2013accelerating}
Rie Johnson and Tong Zhang.
\newblock Accelerating stochastic gradient descent using predictive variance
  reduction.
\newblock In {\em Advances in neural information processing systems}, pages
  315--323, 2013.

\bibitem{robbins1971convergence}
Herbert Robbins and David Siegmund.
\newblock A convergence theorem for non negative almost supermartingales and
  some applications.
\newblock In {\em Optimizing methods in statistics}, pages 233--257. Elsevier,
  1971.

\bibitem{kiefer1952stochastic}
Jack Kiefer, Jacob Wolfowitz, et~al.
\newblock Stochastic estimation of the maximum of a regression function.
\newblock {\em The Annals of Mathematical Statistics}, 23(3):462--466, 1952.

\bibitem{kushner2003stochastic}
Harold Kushner and G~George Yin.
\newblock {\em Stochastic approximation and recursive algorithms and
  applications}, volume~35.
\newblock Springer Science \& Business Media, 2003.

\bibitem{zinkevich2003online}
Martin Zinkevich.
\newblock Online convex programming and generalized infinitesimal gradient
  ascent.
\newblock In {\em Proceedings of the 20th International Conference on Machine
  Learning (ICML-03)}, pages 928--936, 2003.

\bibitem{nedic2009subgradient}
Angelia Nedi{\'c} and Asuman Ozdaglar.
\newblock Subgradient methods for saddle-point problems.
\newblock {\em Journal of optimization theory and applications},
  142(1):205--228, 2009.

\bibitem{xu2018primal}
Yangyang Xu.
\newblock Primal-dual stochastic gradient method for convex programs with many
  functional constraints.
\newblock {\em arXiv preprint arXiv:1802.02724v1}, 2018.

\bibitem{yu2017online}
Hao Yu, Michael Neely, and Xiaohan Wei.
\newblock Online convex optimization with stochastic constraints.
\newblock In {\em Advances in Neural Information Processing Systems}, pages
  1428--1438, 2017.

\bibitem{nesterov1998introductory}
Yurii Nesterov.
\newblock Introductory lectures on convex programming volume i: Basic course.
\newblock {\em Lecture notes}, 1998.

\bibitem{ermoliev1976methods}
Yu~M Ermoliev.
\newblock Methods of stochastic programming, 1976.

\bibitem{nemirovski2009robust}
Arkadi Nemirovski, Anatoli Juditsky, Guanghui Lan, and Alexander Shapiro.
\newblock Robust stochastic approximation approach to stochastic programming.
\newblock {\em SIAM Journal on optimization}, 19(4):1574--1609, 2009.

\bibitem{borkar2000ode}
Vivek~S Borkar and Sean~P Meyn.
\newblock The ode method for convergence of stochastic approximation and
  reinforcement learning.
\newblock {\em SIAM Journal on Control and Optimization}, 38(2):447--469, 2000.

\bibitem{mahdavi2012trading}
Mehrdad Mahdavi, Rong Jin, and Tianbao Yang.
\newblock Trading regret for efficiency: online convex optimization with long
  term constraints.
\newblock {\em The Journal of Machine Learning Research}, 13(1):2503--2528,
  2012.

\bibitem{koppel2017proximity}
Alec Koppel, Brian~M Sadler, and Alejandro Ribeiro.
\newblock Proximity without consensus in online multiagent optimization.
\newblock {\em IEEE Transactions on Signal Processing}, 65(12):3062--3077,
  2017.

\bibitem{calafiore2005uncertain}
Giuseppe Calafiore and Marco~C Campi.
\newblock Uncertain convex programs: randomized solutions and confidence
  levels.
\newblock {\em Mathematical Programming}, 102(1):25--46, 2005.

\bibitem{campi2008exact}
Marco~C Campi and Simone Garatti.
\newblock The exact feasibility of randomized solutions of uncertain convex
  programs.
\newblock {\em SIAM Journal on Optimization}, 19(3):1211--1230, 2008.

\bibitem{boob2019stochastic}
D~Boob, Q~Deng, and G~Lan.
\newblock Stochastic first-order methods for convex and nonconvex functional
  constrained optimization.
\newblock {\em arXiv preprint arXiv:1908.02734}, 2019.

\bibitem{bonnans2013perturbation}
J~Fr{\'e}d{\'e}ric Bonnans and Alexander Shapiro.
\newblock {\em Perturbation analysis of optimization problems}.
\newblock Springer Science \& Business Media, 2013.

\bibitem{sun2018markov}
Tao Sun, Yuejiao Sun, and Wotao Yin.
\newblock On markov chain gradient descent.
\newblock In {\em Advances in Neural Information Processing Systems}, pages
  9896--9905, 2018.

\bibitem{baes2013randomized}
Michel Baes, Michael B\"{u}rgisser, and Arkadi Nemirovski.
\newblock A randomized mirror-prox method for solving structured large-scale
  matrix saddle-point problems.
\newblock {\em SIAM Journal on Optimization}, 23(2):934--962, 2013.

\bibitem{hiriart2013convex}
Jean-Baptiste Hiriart-Urruty and Claude Lemar{\'e}chal.
\newblock {\em Convex analysis and minimization algorithms I: Fundamentals},
  volume 305.
\newblock Springer science \& business media, 2013.

\bibitem{boyd2006subgradient}
Stephen Boyd and Almir Mutapcic.
\newblock Subgradient methods.
\newblock {\em Lecture notes of EE364b, Stanford University, Winter Quarter},
  2007, 2006.

\bibitem{bertsekas1973stochastic}
Dimitri~P Bertsekas.
\newblock Stochastic optimization problems with nondifferentiable cost
  functionals.
\newblock {\em Journal of Optimization Theory and Applications},
  12(2):218--231, 1973.

\bibitem{doan2018convergence}
Thinh~T Doan, Subhonmesh Bose, D~Hoa Nguyen, and Carolyn~L Beck.
\newblock Convergence of the iterates in mirror descent methods.
\newblock {\em IEEE control systems letters}, 3(1):114--119, 2018.

\bibitem{nedic2014stochastic}
Angelia Nedi{\'c} and Soomin Lee.
\newblock On stochastic subgradient mirror-descent algorithm with weighted
  averaging.
\newblock {\em SIAM Journal on Optimization}, 24(1):84--107, 2014.

\bibitem{yamashita2020passivity}
Shunya Yamashita, Takeshi Hatanaka, Junya Yamauchi, and Masayuki Fujita.
\newblock Passivity-based generalization of primal--dual dynamics for
  non-strictly convex cost functions.
\newblock {\em Automatica}, 112:108712, 2020.

\bibitem{nedic2009distributed}
Angelia Nedi{\'c} and Asuman Ozdaglar.
\newblock Distributed subgradient methods for multi-agent optimization.
\newblock {\em IEEE Transactions on Automatic Control}, 54(1):48--61, 2009.

\bibitem{dominguez2013distributed}
Alejandro~D Dominguez-Garcia and Christoforos~N Hadjicostis.
\newblock Distributed matrix scaling and application to average consensus in
  directed graphs.
\newblock {\em IEEE Transactions on Automatic Control}, 58(3):667--681, 2013.

\end{thebibliography}

\end{document}